\definecolor{Red}{rgb}{1,0,0}
\definecolor{Blue}{rgb}{0,0,1}
\numberwithin{equation}{section}
\theoremstyle{definition}
\newtheorem{definition}{Definition}[section]
\theoremstyle{remark}
\theoremstyle{plain}
\newtheorem{theorem}[definition]{Theorem}
\newtheorem{lemma}[definition]{Lemma}
\newtheorem{proposition}[definition]{Proposition}
\newtheorem{corollary}[definition]{Corollary}
\newcommand{\ZZ}{\mathbb{Z}}
\newcommand{\rl}{\mathbb{R}}
\newcommand{\rd}{\mathbb{R}^d}
\begin{document}
\setlength{\abovedisplayskip}{3pt}
\setlength{\belowdisplayskip}{3pt}

\title[Existence of extremizers for a model convolution operator]{Existence of extremizers for a model convolution operator}
\author{Chandan Biswas}

\thanks{This work is supported in part by NSF grants DMS-1266336 and DMS-1600458}

\address{Department of Mathematics, University of Wisconsin, Madison, USA}
\email{cbiswas@wisc.edu}
\subjclass[1991]{Primary 42B10, 44A35, 44A12(secondary).}

\begin{abstract} 
The operator $T$, defined by convolution with the affine arc length measure on the moment curve 
parametrized by $h(t)=(t,t^{2},...,t^{d})$ is a bounded operator from 
$L^{p}$ to $L^{q}$ if $(\frac{1}{p}, \frac{1}{q})$ lies on a line segment.
In this article we prove that at non-end points there exist functions which extremize the associated inequality and 
any extremizing sequence is pre compact modulo the action of the symmetry of $T$. We also establish a relation between extremizers for $T$ at the end points and the extremizers of an X-ray transform restricted to directions along the moment curve. Our proof is based on the ideas of Michael Christ on convolution with the surface measure on the paraboloid. 
\end{abstract}

\maketitle
\tableofcontents
\section{Introduction and statement of results}\label{S:intro}
Let $X$ and $Y$ be two Banach spaces and $T$ be a linear operator from $X$ into $Y$. Beyond the immediate question of boundedness of $T$ or the value of the operator norm of $T$, it is natural to investigate the operator in more details, such as the various properties of the operator - both qualitative and quantitative - of the existence of extremizers, the properties of extremizers, the extremizing sequences, the near extremizers of the operator.

This kind of detailed study of a bounded operator has a long and rich history. One of the most celebrated examples is the work of William Beckner \cite{WB}, where he studied the existence of extremizers for the Hausdorff-Young inequality on $\mathbb{R}^d$. Recently there has been a series of works on the above questions for different operators, such as the Stein-Tomas Inequality \cite{TS,MC2}, an improved Hausdorff-Young inequality \cite{HY}, convolution with the surface measure on the paraboloid \cite{MC5,MC1}, for $k$-plane transforms \cite{Alex,TF}, convolution with the surface measure on the sphere \cite{BS2} to name a few. One motivation to investigate such questions is to produce improved inequalities and thus an inverse result concerning the stability of the inequality near an extremizer (see \cite{HY}, for example) and to make qualitative studies of PDEs \cite{W}.

In this paper we investigate the above questions for a generalized Radon transform along the moment curve. Let $T$ be the linear operator which acts on the continuous functions on $\mathbb{R}^d$ by convolution with affine arc length measure on the moment curve $(t, t^2,..., t^d)$. That is,
for a continuous function $f$ on $\mathbb{R}^d$, $Tf$ is defined by
\begin{equation}\label{defT}
Tf(x)= \int_{\mathbb{R}} f(x+(t, t^{2},..., t^{d})) dt.
\end{equation}
\begin{theorem}{\bf(Christ, Littman, Oberlin, Stovall)}\label{boundT}
$T$ maps $L^{p}(\mathbb{R}^d)$ into $L^{q}(\mathbb{R}^d)$ as a bounded operator with $1\leq p, q\leq\infty$ if and only if
\begin{equation}\label{pvalue}
\frac{1}{p}=\frac{1}{p_\theta}=\frac{1-\theta}{p_0}+\frac{\theta}{p_1}\qquad\text{and}\qquad\frac{1}{q}=\frac{1}{q_\theta}=\frac{1-\theta}{q_0}+\frac{\theta}{q_1}
\end{equation}
for some $\theta\in[0,1]$ where $p_{0}=\frac{d+1}{2}$, $q_{0}=\frac{d(d+1)}{2(d-1)}$ and $p_{1}=q_{0}', q_{1}=p_{0}'$. 
\end{theorem}

The above theorem was proved for $d=2$ by Littman in \cite{L}. Oberlin \cite{O} showed that it is sufficient to satisfy the above condition when $d=3$. The theorem was proved up to the end point for any dimension by Christ in \cite{MC4}. Extending ideas from \cite{MC4}, Stovall proved the strong type end point bound \cite{BS1}.

By using methods based on several observations and results of Christ (see \cite{MC4,MC5,MC3, BS1}), such as the method of refinement to get quantitative restricted weak type inequality and an almost orthogonality argument using a distance-like function between two near extremizers, together with some refinements by Dendrinos and Stovall (using the increasing structure in the method of refinement), \cite{BS3}, we have been able to improve the associated inequality for this operator for $\theta\in(0,1)$, such as establishing existence of functions that optimize the inequality and a qualitative information about functions that nearly optimize the corresponding inequality. At the endpoint i.e. for $\theta=0,1$ we apply a scaling, not adapted to the moment curve, to establish existence of functions that optimize the inequality corresponding to the bounds of an Xray transform. To state our results more precisely, first we need to introduce some definitions.

Let $(p, q)$ be as above. Let $A_p$ be the operator norm of $T$. That is 
$$
A_p= \sup _{\|f\|_{L^p}=1} \|Tf\|_{L^q}.
$$
Note that although $A_p$ depends on $p$, we shall write it simply as $A$ when it is clear what value of $p$ is under consideration.
\begin{definition}\textbf{Extremizer.}\label{ext}
Let $f\in L^p$. We say that $f$ is an extremizer if  
\begin{equation}\label{ext}
\|Tf\|_{L^q}= A \|f\|_{L^p}\neq 0.
\end{equation}
\end{definition}
\begin{definition}\textbf{$\delta$-Quasiextremizer.}
For any $\delta\textgreater 0$, $f\in L^p$ is a $\delta$-quasiextremizer  if
\begin{equation}\label{delta}
\|Tf\|_{L^q}\geq\delta  \|f\|_{L^p}\neq 0.
\end{equation}
\end{definition}
\begin{definition}\textbf{$\delta$-Quasiextremizer pair.}\label{QEP}
Let $\delta\textgreater 0$. We say that an ordered pair $(f,g)$ of measurable functions on 
$\mathbb{R}^d$ is an $\delta$-quasiextremizer pair if 
$$ 
\langle T(f), g \rangle \geq \delta  \|g\|_{L^{q'}} \|f\|_{L^{p}}\neq 0.
$$
\end{definition}
\begin{definition}\textbf{Extremizing sequence.}\label{Extsq}
An extremizing sequence is any sequence $f_n\in L^{p}$ such that
$$
\|f_n\|_{L^p}=1,
$$
$$
\|Tf_n\|_{L^q}\rightarrow A.
$$
\end{definition}

Let us denote the moment curve by $h(t)=(t,t^2,...,t^d)$. Our main theorems are as follows.
\begin{theorem}\label{MT}
For every $\theta\in(0,1)$ there exists an extremizer for $T:L^{p_{\theta}}\rightarrow L^{q_{\theta}}$ for the 
inequality (\ref{ext}) when $d\geq 2$. Furthermore for any nonnegative extremizing sequence $\{f_{n}\}$ 
there exists a sequence of symmetries (diffeomorphisms of $\rd$, preserving $L^p$ norm of $f$), $\{\phi^{*}_{n}\}$, of $\mathbb{R}^{d}$, 
such that there is a subsequence of $\{\phi^{*}_n(f_n)\}$ which converges in $L^{p_\theta}$ to some extremizer for 
$T:L^{p_{\theta}}\rightarrow L^{q_{\theta}}$.
\end{theorem}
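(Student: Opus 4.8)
The plan is to follow the concentration-compactness / profile-decomposition strategy that Christ developed for convolution with the paraboloid, adapted to the moment curve. First I would record the symmetry group of $T$: the affine arc length measure $|h'(t)\wedge\cdots\wedge h^{(d)}(t)|^{2/d(d+1)}\,dt$ on $h(t)=(t,t^2,\dots,t^d)$ is invariant (up to scaling) under the parabolic subgroup of affine maps $t\mapsto at+b$ lifted to $\mathbb{R}^d$, together with translations in $\mathbb{R}^d$; each such symmetry $\phi$ induces a unitary-up-to-scalar action $\phi^*$ on $L^{p_\theta}$ that commutes with $T$ in the sense that it maps extremizing sequences to extremizing sequences. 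The point of quotienting by $\{\phi_n^*\}$ is exactly to defeat the loss of compactness coming from this noncompact group.

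Next I would establish the two pillars that make concentration-compactness run. The first is a \emph{quasiextremizer structure theorem}: there is a $\delta_0>0$ and a constant $C$ so that any $\delta$-quasiextremizer pair $(f,g)$ with $\delta\geq\delta_0$ can be compared, after a symmetry, to a normalized ``bush'' or ``brick'' — concretely, one shows $f$ and $g$ are controlled by characteristic functions of sets of comparable measure on which the relevant multilinear form is large. This is where the refinements of Christ, Stovall and Dendrinos \cite{MC3,MC4,BS1,BS3} enter: they give the trilinear (indeed $(d+1)$-linear) analogue of Christ's combinatorial estimates for the moment curve, and in particular an $\epsilon$-improvement of the basic inequality on sets, which upgrades restricted-type information to the statement that near-extremizers cannot spread out over many dyadic scales. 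The second pillar is a \emph{dichotomy} for extremizing sequences $\{f_n\}$: after applying symmetries, either (compactness) a subsequence converges in $L^{p_\theta}$, or (dichotomy/vanishing) the sequence splits, in the profile-decomposition sense, into two pieces that are asymptotically orthogonal in the sense that their $T$-images have negligible interaction, forcing a strict loss $\|T(f+g)\|_{q_\theta}^{q_\theta}\lesssim\|Tf\|_{q_\theta}^{q_\theta}+\|Tg\|_{q_\theta}^{q_\theta}+o(1)$.

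Putting these together: normalize $\|f_n\|_{p_\theta}=1$ and $\|Tf_n\|_{q_\theta}\to A$, pass to nonnegative $f_n$ (WLOG, since $|Tf|\leq T|f|$ pointwise and $p_\theta,q_\theta$ are the right exponents). Using the quasiextremizer analysis, locate for each large $n$ a symmetry $\phi_n^*$ so that $\phi_n^*(f_n)$ has a nontrivial piece of its $L^{p_\theta}$ mass living at a fixed scale and location, i.e. $\phi_n^*(f_n)\rightharpoonup F$ weakly with $F\neq 0$. Then run the profile decomposition: write $\phi_n^*(f_n)=F+r_n$, apply the dichotomy inequality, and use strict convexity/superadditivity of $t\mapsto t^{p_\theta/q_\theta}$ (here $1<p_\theta<q_\theta<\infty$ for $\theta\in(0,1)$, $d>2$, which is precisely why the endpoints and $d=2$ are excluded) to conclude that the mass cannot split: $\|r_n\|_{p_\theta}\to 0$, so $\phi_n^*(f_n)\to F$ in $L^{p_\theta}$ and $F$ is an extremizer. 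The same argument applied along any extremizing sequence gives the ``precompactness modulo symmetries'' statement.

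The main obstacle, I expect, is the quasiextremizer structure theorem for the moment curve in general dimension $d$: unlike the paraboloid, the convolution operator here is a genuinely $1$-dimensional averaging in $\mathbb{R}^d$, so the relevant multilinear expression is a $(d{+}1)$-fold integral over a variety cut out by the moment curve, and controlling its near-extremizers requires the sharp affine-invariant geometric inequalities (Drury-type / Christ's band structure) together with careful bookkeeping of how the scaling symmetry $t\mapsto at+b$ interacts with dyadic decomposition. Proving that a near-extremizing sequence concentrates at a single scale — ruling out logarithmic spreading — is the crux, and is where the $\epsilon$-improved inequality on sets from \cite{BS3} and Christ's refinement machinery from \cite{MC4,MC5} do the real work. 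Everything after that (the weak limit, the profile decomposition, the convexity endgame) is, by comparison, routine and follows the template of \cite{MC5,MC1}.
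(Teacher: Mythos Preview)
Your outline is broadly correct and parallels the paper through the hard part: the quasiextremizer structure theorem (here realized via \emph{paraballs}, Theorem~\ref{ballandquasi}), the Lorentz-space improvement, and the localization of near-extremizers in both scale and space (Lemmas~\ref{entref}--\ref{spatial}). Where you diverge is the endgame. You propose a profile decomposition with a superadditivity/convexity argument to rule out splitting. The paper instead argues directly: once $\phi_n^*(f_n)=g_n+h_n$ with $\|h_n\|_{p_\theta}\to 0$ and $g_n$ uniformly integrable and supported in a fixed ball (Lemma~\ref{localization}), it extracts weak limits $g_n\rightharpoonup g$, $F_n\rightharpoonup F$, then uses the local $L^2\to H^{1/d}$ smoothing of $T$ to pass to the limit in the bilinear form, obtaining $\langle Tg,F\rangle=A$ and hence an extremizer. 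Strong $L^{p_\theta}$ convergence is then pulled from the Euler--Lagrange identity $T^*\big((Tg)^{q-1}\big)=A^q g^{p-1}$ together with the elementary Lemma~\ref{strong}. Your route via profiles would also work, but the paper's is shorter once the localization lemma is in hand, and it avoids any bilinear ``almost orthogonality of profiles'' step.

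There is, however, a genuine misidentification in your plan. You write that the strict inequality $p_\theta<q_\theta$ is ``precisely why the endpoints and $d=2$ are excluded.'' This is false: one has $p_\theta<q_\theta$ for \emph{every} $\theta\in[0,1]$ when $d>2$ (indeed $q_0/p_0=d/(d-1)>1$ and duality gives the same at $\theta=1$), so your convexity argument would not distinguish interior from endpoint. The actual obstruction is geometric and is isolated in Lemma~\ref{compa}: for $\theta\in(0,1)$ every $\epsilon$-quasiextremal pair has $\alpha\sim_\epsilon\beta$, so the associated paraball is essentially a round ball and the symmetry group $G_d$ (translations, parabolic dilations, gliding) acts transitively on the relevant quasiextremal configurations --- this is what lets Lemma~\ref{localization} normalize to $B_J=\{\|y\|<1\}$. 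At the endpoints only a one-sided comparison holds, paraballs may be arbitrarily eccentric, and $G_d$ no longer acts transitively; an extremizing sequence can then concentrate on thinner and thinner paraballs and, after the \emph{non}-symmetric rescaling, converge to an extremizer of the restricted X-ray transform $X$ rather than of $T$ (Theorem~\ref{endpoint}). If you want your profile-decomposition argument to go through at interior $\theta$, you must invoke this $\alpha\sim\beta$ fact to guarantee that every emerging profile lives in the orbit of a single normalized bump under $G_d$; without it, your step ``locate $\phi_n^*$ so that $\phi_n^*(f_n)\rightharpoonup F\neq 0$'' is exactly what fails at the endpoints.
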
 

To state the second theorem we need a few more definitions.

Let $X$ be the X-ray transform restricted to directions along the moment curve defined on continuous functions on $\mathbb{R}^d=\mathbb{R}\times \mathbb{R}^{d-1}$ by
\begin{equation}\label{defX}
Xf(t,y)=\int_{\mathbb{R}} f(s,y+s(2t,3t^2,...,dt^{d-1})) \,ds.
\end{equation}

\begin{theorem}{\bf(Christ, Erdogan \cite{CE}, Dendrinos, Stovall \cite{BS4}, Erdogan \cite{E}, Laghi \cite{Laghi})}\label{CELSD}
$X$ maps $L^p$ into $L^{q}(L^{r}, dt)$ if for some $\theta\in[0,1)$
$$
\Big(\frac{1}{p}, \frac{1}{q}, \frac{1}{r}\Big)=\Big(\frac{1}{p_\theta}, \frac{1}{q_\theta}, \frac{1}{r_\theta}\Big)=\Big(1-\theta+\frac{\theta d}{d+2},\,\,\frac{\theta \,d}{d+2},\,\,1-\theta+\frac{\theta(d^2-d-2)}{d^2+d-2}\Big)
$$
and the restricted weak type bound holds for $X$ at the end point i.e., for $\theta=1$.
\end{theorem}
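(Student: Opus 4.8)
The plan is to obtain the stated mixed‑norm bounds by interpolating between the two endpoints of the segment $\theta\mapsto(1/p_\theta,1/q_\theta,1/r_\theta)$, which, being affine in $\theta$, joins the point $(1,0,1)$ at $\theta=0$ to the point $\big(\tfrac{d}{d+2},\tfrac{d}{d+2},\tfrac{(d-2)(d+1)}{(d-1)(d+2)}\big)$ at $\theta=1$. (That these, and no others, are the admissible exponents is dictated by the anisotropic dilations $x\mapsto(\lambda x_1,\lambda^2 x_2,\dots,\lambda^d x_d)$, under which the moment curve—hence the family of lines in the directions $(1,2t,3t^2,\dots,dt^{d-1})$ tangent to it—is preserved, together with the translation‑invariance of $X$ in $y$.)

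The endpoint $\theta=0$ is elementary: it asserts $X:L^1(\mathbb{R}^d)\to L^\infty_t(L^1_y)$, and indeed for each fixed $t$ the map $y\mapsto y+s\,(2t,3t^2,\dots,dt^{d-1})$ is, for every fixed $s$, a translation of $\mathbb{R}^{d-1}$, so Tonelli and translation‑invariance give $\|Xf(t,\cdot)\|_{L^1_y}\le\int_{\mathbb{R}}\|f(s,\cdot)\|_{L^1(\mathbb{R}^{d-1})}\,ds=\|f\|_{L^1(\mathbb{R}^d)}$, uniformly in $t$; this is in fact a strong‑type bound.

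The substantive ingredient, and the step I expect to be the main obstacle, is the restricted weak‑type estimate at $\theta=1$, i.e.\ at $\big(\tfrac1p,\tfrac1q,\tfrac1r\big)=\big(\tfrac{d}{d+2},\tfrac{d}{d+2},\tfrac{(d-2)(d+1)}{(d-1)(d+2)}\big)$; this is precisely the content of \cite{CE,E,Laghi,BS4}, which I would not attempt to reprove. The underlying approach is the combinatorial method of Christ (cf.\ \cite{MC3,MC4}): one recasts the estimate as a restricted weak‑type bound for the bilinear form $\langle X\mathbf 1_E,\mathbf 1_F\rangle$ over measurable sets, exploits the nondegeneracy of the moment curve (nonvanishing of the pertinent Wronskian/Vandermonde‑type determinants, which is also why $ds$ is the correct, affine‑arclength, density), and runs an iteration/refinement (``inflation'') argument---starting from a near‑extremal configuration one extracts successively larger structured subsets and closes the estimate by a counting/dimension bound. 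Dendrinos and Stovall \cite{BS4} push this through to the sharp endpoint.

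Finally, the interior range $\theta\in(0,1)$ follows by interpolating the $\theta=0$ endpoint (strong type, hence restricted weak type) against the $\theta=1$ endpoint (restricted weak type): since the outer exponent $q_\theta$ and the inner exponent $r_\theta$ both vary strictly along the connecting segment, a real‑interpolation argument adapted to mixed‑norm Lebesgue spaces yields strong type $X:L^{p_\theta}\to L^{q_\theta}(L^{r_\theta},dt)$ there. Concretely I would dualize the inner norm, regard $X$ through the bilinear form $(f,G)\mapsto\langle Xf,G\rangle$ with $G\in L^{q'}_t(L^{r'}_y)$, test on characteristic functions, and run the layer‑cake decomposition; the only delicate point is the bookkeeping forced by the mixed norm, which is routine (alternatively one invokes Bourgain's interpolation trick, which is robust to having only restricted weak type at the deep endpoint at the cost of an $\varepsilon$ that is harmless on the open interval).
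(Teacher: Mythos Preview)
The paper does not prove this theorem; it is stated with attribution to \cite{CE,E,Laghi,BS4} and used as a black box. There is therefore no ``paper's own proof'' to compare against.

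That said, your sketch is an accurate high-level account of how the cited works obtain the result: the $\theta=0$ endpoint $X:L^1\to L^\infty_t(L^1_y)$ is indeed trivial by Fubini and translation-invariance, the $\theta=1$ endpoint is the deep combinatorial/inflation argument of Christ and its refinements, and the open segment follows by interpolation. One small caution on the last step: interpolation in mixed-norm Lebesgue spaces with both the inner and outer exponents varying, and with one endpoint only restricted weak type, is not quite a one-line citation of Marcinkiewicz; the cleanest route is exactly the one you indicate (dualize, work with the bilinear form on characteristic functions, and use that the $\theta=0$ endpoint is genuinely strong type so that real interpolation yields strong type strictly inside the segment). Your invocation of Bourgain's trick is also appropriate as an alternative. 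So the proposal is correct in spirit and matches the architecture of the cited proofs, but there is nothing in the present paper to compare it to.
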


The X-ray transform has been studied by many authors for its connection to many other parts of mathematics. It was first studied by Gelfand in \cite{GG}. There has been a lot of work done to investigate the boundedness properties of the X-ray like transforms, such as \cite{GGV} and \cite{GS}, to name a few. 

It has been proved by Michael Christ, that extremizers of $T$ exist in the case $d=2$, and they have been identified and shown to be unique up to symmetries. Although it is still not known whether for $d>2$, there exists an extremizer for $T:L^{\frac{d+1}{2}}\rightarrow L^{\frac{d(d+1)}{2(d-1)}}$(corresponding to $\theta=0$ in Theorem~\ref{boundT}), we have been able to prove the following.
\begin{theorem}\label{endpoint}
Let $T$ be defined as in \ref{defT} and $d>2$.
\begin{itemize}\label{MT2}
\item Every extremizing sequence for $T:L^{p_0}\rightarrow L^{q_0}$ has a subsequence that either converges modulo symmetries of $T$ to an extremizer for $T:L^{p_0}\rightarrow L^{q_0}$, or that converges modulo the nonsymmetry, $f_n\rightarrow r^{\frac{2(d-1)}{d+1}}_nf_n((0,r_{n}x')+h(x_1))$, to an extremizer for $X^{*}:L^{p_0}\rightarrow L^{q_0}$ corresponding to $\theta=\frac{(d+2)(d-1)}{d^2+d}$ in Theorem~\ref{CELSD}.
\item Likewise, every extremizing sequence for $T:L^{p_1}\rightarrow L^{q_1}$ has a subsequence that either converges modulo symmetries of $T$ to an extremizer for $T:L^{p_1}\rightarrow L^{q_1}$, or that converges modulo nonsymmetry, $f_n\rightarrow r^{\frac{d^2-d+2}{d+1}}_nf_n(r_{n}x)$, to an extremizer for $X:L^{p_1}\rightarrow L^{q_1}$.
\item $\|T\|_{L^{p_0}\rightarrow L^{q_0}}\geq\|X^{*}\|_{L^{p_0}\rightarrow L^{q_0}}$ and if there exists an extremizing sequence for $T:L^{p_0}\rightarrow L^{q_0}$ that does not have a subsequence converging to an extremizer modulo symmetries of $T$, then $\|T\|_{L^{p_0}\rightarrow L^{q_0}}=\|X^{*}\|_{L^{p_0}\rightarrow L^{q_0}}$.
\end{itemize}
\end{theorem}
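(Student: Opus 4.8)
To prove Theorem~\ref{endpoint} I would follow the concentration--compactness scheme that Christ developed for convolution with the paraboloid (cf.~\cite{MC5,MC1}), adapted to the affine--arclength structure of the moment curve. Its engine is a profile decomposition for extremizing sequences, organized by the symmetry group $\mathcal{G}$ of $T$ --- generated by translations of $\mathbb{R}^d$, the parabolic dilations $\delta_\lambda\colon(x_1,\dots,x_d)\mapsto(\lambda x_1,\dots,\lambda^d x_d)$, the reparametrization maps induced by $t\mapsto t+a$, and the $L^p$--normalizing scalars --- together with one genuinely non--symmetric one--parameter family: for $p_0$ the anisotropic tube maps $\psi_r f=r^{2(d-1)/(d+1)}f((0,rx')+h(x_1))$, and for $p_1$ the isotropic dilations $f\mapsto r^{(d^2-d+2)/(d+1)}f(r\,\cdot)$. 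Throughout one uses that $q_\theta>p_\theta$ at both endpoints, so that $T$ is strictly smoothing there.

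The first step is the structural input that every $\delta$--quasiextremizer pair $(f,g)$ for $T$ at an endpoint can be reduced, losing only a factor depending on $\delta$, to a pair $(\mathbf{1}_E,\mathbf{1}_F)$ whose sets have geometry controlled by a single element of $\mathcal{G}$ and a single tube--scale $r$; this is the moment--curve analogue of Christ's reduction of quasiextremizers to characteristic functions, built on the strong--type endpoint bound of Stovall~\cite{BS1} (after Christ~\cite{MC4}), the refinements of Stovall and Dendrinos~\cite{BS3}, and the affine geometry of the moment curve. From it I would extract, for any extremizing sequence and after passing to a subsequence, a profile decomposition $f_n=\sum_{j\le J}\phi^j_n(F^j)+R^J_n$ in which each $\phi^j_n$ is an element of $\mathcal{G}$, or an element of $\mathcal{G}$ composed with $\psi_{r^j_n}$ for some $r^j_n\to 0$; distinct indices carry asymptotically divergent parameters; $\|R^J_n\|_p\to 0$ as $J\to\infty$ uniformly in $n$; $\sum_j\|F^j\|_p^p\le 1+o(1)$; and --- the decisive point --- an asymptotic orthogonality estimate gives $\limsup_n\|Tf_n\|_q^q\le\sum_j\limsup_n\|T\phi^j_n(F^j)\|_q^q$. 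Since $\|T\phi^j_n(F^j)\|_q\le A\|F^j\|_p$ and $q/p>1$, strict superadditivity of $s\mapsto s^{q/p}$ forces exactly one nonzero profile, i.e.\ $f_n=\phi_n(F)+o_{L^p}(1)$ with $\|F\|_p=1$; this is the same single--profile mechanism needed for Theorem~\ref{MT}.

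The dichotomy is then read off from the type of $\phi_n$. If $\phi_n\in\mathcal{G}$ --- equivalently, the accompanying tube--scale stays bounded away from $0$ --- then $(\phi^*_n)^{-1}f_n\to F$ in $L^p$ along the subsequence, and continuity of $T$ and of the operator norm give $\|TF\|_q=A\|F\|_p$, so $F$ is an extremizer for $T$: the first alternative. If instead $\phi_n=\phi^*_n\circ\psi_{r_n}$ with $r_n\to 0$, I would show that the conjugated operators $T\circ\psi_{r_n}$, after the matching reparametrization on the output side, converge in the strong operator topology --- and, using the precompactness from the first step, with no loss of mass --- to a nonzero constant multiple of $X^*$ (for $p_1$, of $X$), the scaling exponents aligning with $L^{p_0},L^{q_0}$ (resp.\ $L^{p_1},L^{q_1}$) precisely because $(p_0,q_0)$ (resp.\ $(p_1,q_1)$) is the $T$--endpoint pair and $\theta=1$ is the endpoint in Theorem~\ref{CELSD}; hence $\|T\psi_{r_n}F\|_q\to\|X^*F\|_q$ and $F$ must extremize $X^*$: the second alternative. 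For the third assertion, each $\psi_{r_n}$ is an $L^{p_0}$--isometric bijection, so $\|T\circ\psi_{r_n}\|_{p_0\to q_0}=\|T\|_{p_0\to q_0}$, and lower semicontinuity of the operator norm under a strong--operator limit gives $\|X^*\|_{p_0\to q_0}\le\|T\|_{p_0\to q_0}$; if moreover a degenerating extremizing sequence exists, the same computation gives $\|X^*F\|_{q_0}=\lim_n\|Tf_n\|_{q_0}=A$ with $\|F\|_{p_0}=1$, so $\|X^*\|_{p_0\to q_0}\ge A$ and equality holds. The $p_1$ statement is the adjoint of the $p_0$ statement --- via $p_1=q_0'$, $q_1=p_0'$ and the point reflection $x\mapsto-x$, which conjugates $T$ to $T^*$ --- proved identically with $X$ in place of $X^*$ and isotropic dilations in place of tube maps.

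The main obstacle is the asymptotic orthogonality behind $\limsup_n\|Tf_n\|_q^q\le\sum_j\limsup_n\|T\phi^j_n(F^j)\|_q^q$: the competing parameters include not only translations and parabolic dilations but the non--symmetric tube scales $r^j_n\to 0$, possibly at different rates, so one must bound, uniformly, the interaction between $T(\phi^j_n F^j)$ and $T(\phi^k_n F^k)$ whenever the pairs $(\phi^j_n,r^j_n)$ and $(\phi^k_n,r^k_n)$ diverge from one another; this forces the quasiextremizer--to--characteristic--function reduction to hold uniformly across all of these regimes, and it is here that the refined bounds of \cite{MC4,BS1,BS3} and the affine geometry of the moment curve must be used most carefully. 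A secondary technical point is to verify that the output--side reparametrization really makes $\lim_n T\circ\psi_{r_n}$ equal to (a multiple of) the X--ray transform of Theorem~\ref{CELSD}, nonzero on the profile $F$, rather than a degenerate operator --- this is the step that pins down the appearance of $X$ and $X^*$ and makes the exponent bookkeeping in Theorem~\ref{endpoint} come out.
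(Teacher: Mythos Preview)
Your outline is sound and would succeed, but it is organized around a genuine multi-profile decomposition with an asymptotic-orthogonality step, whereas the paper bypasses that machinery entirely. In the paper, the quasiextremizer structure theorem (Theorem~\ref{ballandquasi}) combined with the entropy refinement and spatial localization (Lemmas~\ref{entref}, \ref{highint}, \ref{spatial}) already forces any near-extremizer, after a symmetry of $T$, to be concentrated on a \emph{single} paraball $B(0,0,\alpha_n,1)$ up to a small $L^p$ error; there is never a sum over competing profiles and hence no orthogonality to prove. The dichotomy then reduces to whether the eccentricity $\alpha_n$ stays bounded away from $0$ (giving precompactness and an extremizer for $T$ by the argument of Section~\ref{weak}) or $\alpha_n\to 0$. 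In the latter case the paper does not invoke strong-operator convergence abstractly: it writes out the change of variables $\tilde f_n(x)=\alpha_n^{(d-1)/p_0}f_n\big((0,\alpha_n x')+h(x_1)\big)$, $\tilde g_n(x)=\alpha_n^{d/q_0'}g_n(\alpha_n x)$ and computes directly that $\langle Tf_n,g_n\rangle=\langle X^*\tilde f_n,\tilde g_n\rangle+o(1)$ (Lemma~\ref{samenorm}), then uses the local $L^2\to H^s$ smoothing of $X^*$ to pass to a weak limit. The upgrade from weak to strong $L^p$ convergence is done not via your single-profile/remainder mechanism but by the Euler--Lagrange identity for $X$ together with a uniform-convexity lemma (Lemma~\ref{strong}). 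The inequality $\|X^*\|\le\|T\|$ is likewise proved by a direct limiting computation on smooth test functions (Lemma~\ref{norms}), rather than by lower semicontinuity of the operator norm.

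What your framing buys is portability: the profile-decomposition language generalizes cleanly to other operators. What the paper's route buys is that the step you flag as the main obstacle --- controlling interactions between profiles with divergent tube scales --- simply does not arise, because the single-paraball localization is established \emph{before} one ever looks at the degenerating scale. One small correction to your sketch: in a standard profile decomposition the remainder $R^J_n$ does not go to zero in $L^p$ but only satisfies $\|T R^J_n\|_q\to 0$; your strict-convexity argument still works with that weaker statement, but the strong $L^p$ convergence of the subsequence then needs a separate argument (as the paper provides).
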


\begin{corollary}
At least one of the following must hold:
\begin{itemize}
\item(A) There exists an extremizer for $T:L^{p_0}\rightarrow L^{q_0}$; or
\item (B) There exists an extremizer for $X:L^{p_1}\rightarrow L^{q_1}$.
\end{itemize}
\end{corollary}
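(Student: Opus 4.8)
The plan is to read this off from the first bullet of Theorem~\ref{endpoint}, together with a short duality argument that upgrades an extremizer for $X^{*}$ to an extremizer for $X$. Since $T$ is a bounded, nonzero operator from $L^{p_{0}}$ to $L^{q_{0}}$, its operator norm $A$ is finite and positive, so by the very definition of $A$ as a supremum there exists an extremizing sequence $\{f_{n}\}$, i.e.\ a sequence with $\|f_{n}\|_{L^{p_{0}}}=1$ and $\|Tf_{n}\|_{L^{q_{0}}}\to A$. (If one wishes to work with nonnegative functions one may also replace $f_{n}$ by $|f_{n}|$, since the convolution kernel is a nonnegative measure, so $|Tf|\le T|f|$ pointwise, while $\||f_{n}|\|_{L^{p_{0}}}=\|f_{n}\|_{L^{p_{0}}}$.)

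Next I would invoke the first bullet of Theorem~\ref{endpoint}. Applied to this extremizing sequence it yields a subsequence that either (i) converges in $L^{p_{0}}$, after applying a suitable sequence of symmetries of $T$, to an extremizer for $T:L^{p_{0}}\to L^{q_{0}}$, or (ii) converges in $L^{p_{0}}$, after applying the nonsymmetry $f_{n}\mapsto r_{n}^{2(d-1)/(d+1)}f_{n}\big((0,r_{n}x')+h(x_{1})\big)$, to an extremizer for $X^{*}$. In case (i) conclusion (A) holds and we are done, so it remains to handle case (ii) and to pass from an extremizer for $X^{*}$ to one for $X$.

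For that final step I would use the standard duality $\langle Xf,g\rangle=\langle f,X^{*}g\rangle$ together with the fact that, at the exponents $(p,q,r)$ for $X$ relevant here (those in Theorem~\ref{CELSD} corresponding to the $L^{p_{0}}\to L^{q_{0}}$ endpoint of $T$), all three exponents lie in the open interval $(1,\infty)$; hence $L^{p}$ is reflexive and the supremum defining $\|X^{*}g\|_{L^{p'}}$ is attained. Concretely, let $g$ be an extremizer for $X^{*}$ and choose $f_{0}$ with $\|f_{0}\|_{L^{p}}=1$ and $\langle X^{*}g,f_{0}\rangle=\|X^{*}g\|_{L^{p'}}$ (so that $f_0\neq 0$). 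Then, writing $\|\cdot\|$ for the norm in the relevant space in each slot,
\[
\|X^{*}\|\,\|g\|=\langle X^{*}g,f_{0}\rangle=\langle g,Xf_{0}\rangle\le\|g\|\,\|Xf_{0}\|\le\|g\|\,\|X\|\,\|f_{0}\|=\|g\|\,\|X\|,
\]
and since $\|X^{*}\|=\|X\|$ every inequality must be an equality; in particular $\|Xf_{0}\|=\|X\|\,\|f_{0}\|\neq 0$, so $f_{0}$ is an extremizer for $X$ and conclusion (B) holds.

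Being a corollary, this does not present a substantial obstacle; the only point needing a little care is the reflexivity bookkeeping in the $X^{*}$-to-$X$ passage, namely confirming that the exponent $p$ attached to $X$ at the point dual to the $L^{p_{0}}\to L^{q_{0}}$ endpoint of $T$ satisfies $1<p<\infty$, which is exactly what guarantees the existence of the competitor $f_{0}$ above. An alternative that avoids the $X$ versus $X^{*}$ issue altogether is to begin instead with an extremizing sequence for $T:L^{p_{1}}\to L^{q_{1}}$ and apply the second bullet of Theorem~\ref{endpoint}: it produces directly either an extremizer for $T$ — which, since $p_{1}=q_{0}'$ and $q_{1}=p_{0}'$ make these two mapping properties of $T$ adjoint to one another up to the affine symmetries of the moment curve, gives an extremizer for $T:L^{p_{0}}\to L^{q_{0}}$ and hence (A) — or an extremizer for $X$, giving (B).
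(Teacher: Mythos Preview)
Your derivation is correct and is exactly the immediate deduction from Theorem~\ref{endpoint} the paper intends; no separate proof of the corollary is written out in the paper. One small clarification: in the context of Theorem~\ref{endpoint} the operator $X$ is treated as a map $L^{p_1}\to L^{q_1}$ between ordinary Lebesgue spaces (with adjoint $X^{*}:L^{p_0}\to L^{q_0}$), so the reflexivity you need is simply $1<p_0,q_0<\infty$ for $d>2$; the mixed-norm exponents of Theorem~\ref{CELSD} are not what is in play here. Your alternative route via the second bullet is equally valid and arguably marginally cleaner, since it yields an extremizer for $X$ directly, and the passage from an extremizer for $T:L^{p_1}\to L^{q_1}$ to one for $T:L^{p_0}\to L^{q_0}$ uses the same duality computation together with the observation (made in the paper) that $T^{*}$, being convolution with affine arclength on $-h$, is conjugate to $T$ via the isometry $x\mapsto -x$.
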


\section{Outline of the proof}
A simplified outline of the argument is as follows. Given a function $f$ which is very close to being an extremizer, we consider a dyadic decomposition of the range: $f=\sum_j 2^{j}f_j$ where $\frac{1}{2}\chi_{E_j}\leq f_j<\chi_{E_{j}}$ for pairwise disjoint measurable sets $E_j$ in $\mathbb{R}^d$. Following \cite{MC5} we prove that for $f$ to be a near extremizer (say $\|Tf\|_{q}\geq A(1-\delta)\|f\|_p$ for small $\delta$) the set of indices $\{j\}$ essentially lies in an interval around $J\in\ZZ$ in $\ZZ$ of length depending only on $\delta$ but independent of $f$. This is done by applying a certain``trilinear" bound for $T$ (see Lemma~\ref{extrapolation} and Theorem~\ref{highint}) using Christ's method of refinement \cite{MC4} and the increasing structure of the refinement due to Dendrinos and Stovall \cite{BS3}. The trilinear bound has already been established in Lemma $5.2$ in \cite{BS1}. Our proof is much simpler in comparison due to the increasing structure in the method of refinement. We show that $E_j$ are very close to being curved ``parallelepipeds", and these are ``almost" pairwise disjoint. More precisely these parallelepipeds are projections of balls in the incidence manifold. This is accomplished by introducing a mock distance on the set of all natural extremizers and proving that any two distant natural extremizers are almost orthogonal in the $L^q$ space, see Lemma~\ref{orthogonal} (this is similar to the argument due to Christ in \cite{MC5}). One significant difference in the structure of these parallelepipeds corresponding to when $d=2$
(in this case the paraboloid in \cite{MC5} is the same as the moment curve) from $d>2$ is when $d=2$ the symmetries of the operator act transitively on this set of parallelepipeds. For $d>2$ the situation is quite different. For $\theta\in(0,1)$ the symmetries of the operator act transitively on this set of parallelepipeds (near extremizers for $L^{p_\theta}\rightarrow L^{q_\theta}$ bound). So after applying symmetries we can assume that all these near extremizers are well adapted to the unit ball in $\rd$. In other words $E_J\sim B(0,1)$, the unit ball in $\rd$. So we can proceed as in the case of Paraboloid \cite{MC5} using Christ's argument to prove the existence of extremizers. On the other hand when $d>2$, the symmetry group does not act transitively on the set of near extremizers for $L^{p_0}\rightarrow L^{q_0}$ bound. As a consequence one has to allow the thickness of these parallelepipeds to become arbitrarily small as $f$ becomes closer to being an extremizer. We overcome this obstruction by applying a non-symmetric ``scaling" to the function $f$, so that we can avoid an extremizing sequence converging to $0$ pointwise while simultaneously preserving the $L^p$ norm of $f$. This is essentially the only new part of our analysis. This enables us to make the now rescaled near extremizers for $T:L^{p_0}\rightarrow L^{q_0}$ well adapted to the unit ball in $\rd$ and thus proving existence of extremizers for the Xray tranform, $X:L^{p_0}\rightarrow L^{q_0}$.
\smallskip

\section{Notation} 
Most of the notation we will use is fairly standard. In this note
$c, C$ denote implicit small and large positive constants respectively,
which are allowed to change from one line to another. If $1\leq p\leq\infty$, 
we denote by $p^{'}$ the exponent dual to $p$. We use $|E|$ to indicate 
Lebesgue measure. When A and B are non-negative real numbers, we write 
$A\lesssim B$ to mean $A\leq CB$ for an implicit constant $C$, and $A\sim B$ 
when $A\lesssim B$ and $B\lesssim A$. We will also employ the somewhat less 
standard notation $\mathcal{T}(E,F):=\langle T({\chi_{E}}), \chi_{F}\rangle$
when $E$ and $F$ are Borel sets and $T$ is a linear operator. We will also use 
$(E, F)$ to denote the pair of functions $(\chi_{E}, \chi_{F})$. We say that the sequence $\{f_{n}\}\subset L^{p}$ converges weakly to $f$ in $L^{p}$ if for any function $\psi\in L^{p'}$, $\int f_{n}\psi$ converges to
$\int f\psi$ and $\{f_{n}\}$ converges strongly to $f$ in $L^{p}$ if
$\int |f_{n}-f|^{p}$ converges to $0$. Since $T(f)\leq T(|f|)$ for all $f\in L^{p}$ and we are interested in only all those $f$ for which
$|T(f)|$ is large, in this paper all the functions $f$ will be assumed to be nonnegative.
\smallskip

\section{Symmetries}
In this section we study the symmetries of the operator $T$.

\begin{definition}
A symmetry of $T:L^{p}\rightarrow L^{q}$ is an $L^p$ isometry $\phi^{*}$ for which $T\circ\phi^{*}=\psi^{*}\circ T$ for some $L^q$ isometry $\psi^{*}$.
\end{definition}

The operator $T$ has many symmetries. Let $\Theta : \mathbb{R}^{d+d}\rightarrow \mathbb{R}^{d-1}$ 
be the function defined by
$$
\Theta (x,y)=(y_2-x_2-(y_1-x_1)^2, y_3-x_3-(y_1-x_1)^3,....., y_d-x_d-(y_1-x_1)^d)
$$
and Let $\Sigma$ be the incidence manifold $\Sigma=\{(x,y): \Theta(x,y)=0\}$. Let us denote 
the set of all diffeomorphisms of $\mathbb{R}^d$ by Diff$(\mathbb{R}^d)$.

\begin{definition}
Let $G_{d,d}$ denote the set of all $(\phi,\psi)\in\text{Diff}(\mathbb{R}^d)\times\text{Diff}(\mathbb{R}^d)$ such that 
$$
\Theta(\phi(x),\psi(y))= 0\quad \text{if and only if} \quad\Theta (x,y)=0 \quad\text{for all} \quad(x,y)\in\mathbb{R}^{d+d}.
$$
\end{definition}
In other words, $G_{d,d}$ denotes the set of all ordered pairs of diffeomorphisms of $\mathbb{R}^d$ which
preserve the incidence manifold $\Sigma$. We also let $G_d$ denote the set of all $\phi\in 
\text{Diff}(\mathbb{R}^d)$ such that there exists $\psi\in\text{Diff}(\mathbb{R}^d)$ such that 
$(\phi,\psi)\in G_{d,d}$.

The followings are examples of elements of $G_{d,d}$.
\begin{itemize}
\item Translation: $(\phi(x), \psi(y))=(x+v, y+v)$ for some $v\in \mathbb{R}^d$.
\item Scaling: $(\phi(x), \psi(y))=\big(S_r(x), S_r(y)\big)=\big((rx_1,r^2x_2,...,r^dx_d),(ry_1,r^2y_2,...,r^dy_d)\big)$
for some $r\in\mathbb{R} -\{0\}$.
\item\textit{Gliding along h}: $(\phi(x), \psi(y))= (G_{t_0}(x),G_{t_0}(y)+h(t_0))$ for some $t_0\in\mathbb{R}$,
where $G_{t_0}$ is the linear operator defined on $\mathbb{R}^d$ associated to the $(d\times d)$
matrix
\[
G_{t_0}=
  \begin{bmatrix}
    1 & 0 & 0\ldots & 0\\
     2t_0 & 1 & 0 \ldots & 0\\     
     3t_0^2 & 3t_0 & 1 \ldots & 0\\
     \vdots & \vdots & \ldots&0\\
     {m\choose 1}t_0^{m-1} & {m\choose 2}t_0^{m-2} & \ldots&0\\
     \vdots & \vdots & \ldots & 0\\
     {d\choose 1}t_0^{d-1} & {d\choose 2}t_0^{d-2} & \ldots & 1
     \end{bmatrix}.
\]
\end{itemize}
Note that $h(t+t_0)=G_{t_0}(h(t))+h(t_0)$ for all $t,t_0\in\mathbb{R}$.

The elements of $G_d$ play a central role in our analysis. There might be more elements in $G_d$ than the ones in the above examples but as
we shall see these are enough for our analysis. For each of the three types of symmetries 
described above the associated diffeomorphism has 
constant Jacobian. For each $\phi$ we define the associated operators 
$\phi^*:L^p\rightarrow L^p$ by $\phi^*f(x)=J_{\phi}^{\frac{1}{p}} f(\phi(x))$. Then

$$
\|\phi^*(f)\|_{L^{p}}=\|f\|_{L^{p}}, \quad
\langle T(\phi^{*}f),\psi^{*}g \rangle = \langle T(f),g \rangle .
$$
\smallskip

\section{Paraballs}

In this section we shall study an essentially exhaustive list of quasiextremal pairs. 
They are natural in the sense that every quasi-extremal pair is close, in a sense that degrades as the constant 
of quasiextremality decreases, to one of these pairs, see Theorem \ref{ballandquasi}. It is elementary to show that the 
characteristic function of the set $\{x\in\mathbb{R}^d:\|x\|<\delta\}$ is an $\delta^C$-quasiextremal for $T:L^{p_\theta}\rightarrow L^{q_\theta}$ for 
each $\theta\in [0,1]$. For $\theta=0$, we have, in addition, that the characteristic function of the set $\{x\in\mathbb{R}^d:\|x\|<\delta\}$, and for $\theta=1$,
the $\delta$-tubular neighborhood of the set $\{(t,t^2,....,t^d):t\in [-1,1]\}$ i.e. $\{x\in\mathbb{R}^d: \|x-(t,t^2,...,t^d)\|< \delta\,\, \text {for some}\,\, t\in[-1,1]\}$ are $c$-quasiextremal where $c$ is a small positive number that depends only on $d$ and independent of $\delta$, see Proposition \ref{FQE}. The set of all paraballs is the collection of sets that we produce by applying the elements of $G_d$ to these sets. Below is a more detailed description of the ``paraballs".

\begin{definition}For $0<\alpha, \beta\leq1$, we define
\begin{itemize}
\item $B(0,0,\alpha,1)=\{y\in\mathbb{R}^d:|y_1|<1\,\text{and}\,\,\|y-h(y_1)\|<\alpha\}$\,\,and\,\,$B^{*}(0,0,\alpha,1)=\{x\in\mathbb{R}^d:|x_i|<\alpha\,\,\text{for all}\,\,1\leq i\leq d\}$.
\item $B(0,0,1,\beta)=\{y\in\mathbb{R}^d:|y_i|<\beta\,\,\text{for all}\,\,1\leq i\leq d\}$\,\,and\,\,$B^{*}(0,0,1,\beta)=\{x\in\mathbb{R}^d:|x_1|<1 \,\text{and}\, \,\|x+h(-x_1)\|<\beta\}$.
\item 
\begin{equation}
\begin{split}
&B(\bar{x},t_0,\lambda\alpha,\lambda)=G_{t_0}S_{\lambda}B(0,0,\alpha,1)+\bar{x}+h(t_0)\\
&B^{*}(\bar{x},t_0,\lambda\alpha,\lambda)=G_{t_0}S_{\lambda}B^{*}(0,0,\alpha,1)+\bar{x}.
\end{split}
\end{equation}
\end{itemize}
\end{definition}
We also define a scaling of a paraball by
\begin{equation}\label{scale}
\lambda B(\bar x, t_0,\alpha,\beta)= B(\bar x, t_0, \lambda\alpha,\lambda\beta).
\end{equation}
Note that this does not correspond to a symmetry of the operator.

As an example in the special case when $0<\alpha\leq\beta$ the paraball $B=B(\bar x, t_0,\alpha,\beta)$ is the set of all 
$y\in\mathbb{R}^d$ satisfying all of 
\begin{itemize}
\item $|y_1-\bar y_1| \leq \beta$;
\item $|\sum_{i=1}^{m}\binom{m}{i}{(-t_0)}^{m-i}(y_i-\bar y_i)-(y_1-\bar y_1)^m|\leq \beta^{m-1}\alpha$\,\,\,\,for all\,\,\,\,$1< m\leq d$                                                 
\end{itemize}
where $\bar y=\bar x+h(t_0)$.

For $0<\alpha\leq\beta$, the dual paraball, denoted by $B^{*}=B^{*}(\bar x, t_0,\alpha,\beta)$, is
the set of all $x\in\mathbb{R}^d$ such that 
\begin{itemize}
\item $|x_1-\bar x_1| \leq\alpha$;
\item  $|\sum_{i=1}^{m}\binom{m}{i}{(-t_0)}^{m-i}(x_i-\bar x_i)|\leq \beta^{m-1}\alpha$\,\,\,\,for all\,\,\,$1< m\leq d$.                                           
\end{itemize}

Similarly when $0<\beta<\alpha$ the paraball $B=B(\bar x, t_0,\alpha,\beta)$ is the set of all 
$y\in\mathbb{R}^d$ satisfying all of 
\begin{itemize}
\item $|y_1-\bar y_1| \leq\beta$;
\item  $|\sum_{i=1}^{m}\binom{m}{i}{(-t_0)}^{m-i}(y_i-\bar y_i)|\leq \alpha^{m-1}\beta$\,\,\,\,for all\,\,\,$1< m\leq d$                                           
\end{itemize}
where $\bar y=\bar x+h(t_0)$.

The dual paraball, denoted by $B^{*}=B^{*}(\bar x, t_0,\alpha,\beta)$ is
the set of all $x\in\mathbb{R}^d$ such that
\begin{itemize}
\item $|x_1-\bar x_1| \leq \alpha$;
\item $|\sum_{i=1}^{m}\binom{m}{i}{(-t_0)}^{m-i}(x_i-\bar x_i)+(\bar x_1-x_1)^m|\leq\alpha^{m-1}\beta$\,\,\,\,for all\,\,\,\,$1< m\leq d$.                                                  
\end{itemize}

For our analysis of an extremizing sequence it is important to measure 
how two paraballs interact with each other. We define a mock-distance
on the set of all paraballs to measure the interaction between 
any two distant paraballs similar to the distance defined in \cite{MC5}.
\begin{definition}\label{distform}
Let $B^{a}=B(\bar x^{a}, t_{a},\alpha_{a},\beta_{a})$ and $B^{b}=B(\bar x^{b}, t_{b},\alpha_{b},\beta_{b})$ 
be two paraballs. Let $\bar y^a=\bar x^a+h(t_a)$ and $\bar y^b=\bar x^b+h(t_b)$ be the centers of the dual 
paraballs of $B^a$ and $B^b$ respectively. We define:
\begin{itemize}
\item If $\alpha_a\leq\beta_a$ and $\alpha_b\leq\beta_b$,
\begin{equation}
\begin{split}
d(B^a, B^b)&=\frac{\max\big(\alpha_{a}\beta_{a}\alpha_{a}{\beta_{a}}^{2}...\alpha_a\beta_a^{d-1}, \quad\alpha_b\beta_b\alpha_{b}{\beta_{b}}^{2}...\alpha_b\beta_b^{d-1}\big)}    {\min\big(\alpha_{a}\beta_{a}\alpha_{a}{\beta_{a}}^{2}...\alpha_a\beta_a^{d-1}, \quad\alpha_b\beta_b\alpha_{b}{\beta_{b}}^{2}...\alpha_b\beta_b^{d-1}\big)}+\Big(\frac{\beta_a}{\beta_b} + \frac{\beta_b}{\beta_a}\Big)\\ 
                 &+\Big(\frac{\alpha_a}{\alpha_b} + \frac{\alpha_b}{\alpha_a}\Big)
                 +|\bar y_1^a-\bar y_1^b|\bigg(\frac{1}{\beta_a}+\frac{1}{\beta_b}\bigg)
                 +|\bar x_1^a-\bar x_1^b|\bigg(\frac{1}{\alpha_a}+\frac{1}{\alpha_b}\bigg)\\
                 &+\sum_{m=2}^{d} \frac{|\sum_{i=1}^{m} \binom{m}{i}{(-t_{a})}^{m-i}(\bar y_{i}^b-
                 \bar y_{i}^{a})-(\bar y_{1}^b-\bar y_{1}^a)^m|}{\alpha_a\beta_{a}^{m-1}}\\
                 &+\sum_{m=2}^{d} \frac{|\sum_{i=1}^{m} \binom{m}{i}{(-t_{b})}^{m-i}(\bar y_{i}^a-
                 \bar y_{i}^{b})-(\bar y_{1}^a-\bar y_{1}^b)^m|}{\alpha_b\beta_{b}^{m-1}}\\
                 &+\sum_{m=2}^{d} \frac{|\sum_{i=1}^{m} \binom{m}{i}{(-t_{a})}^{m-i}(\bar x_{i}^b-\bar x_{i}^a)|}{\alpha_a\beta_{a}^{m-1}}\\
                 &+\sum_{m=2}^{d} \frac{|\sum_{i=1}^{m} \binom{m}{i}{(-t_{b})}^{m-i}(\bar x_{i}^a-\bar x_{i}^b)|}{\alpha_b\beta_{b}^{m-1}};
\end{split}
\end{equation}
\item If $\beta_a<\alpha_a$ and $\beta_b<\alpha_b$,
\begin{equation}
\begin{split}
d(B^a, B^b)&=\frac{\max\big(\alpha_{a}\beta_{a}\alpha_{a}^2{\beta_{a}}...{\alpha_a}^{d-1}\beta_a, \quad\alpha_b\beta_b\beta_{b}{\alpha_{b}}^{2}...\beta_b\alpha_b^{d-1}\big)}    {\min\big(\alpha_{a}\beta_{a}\beta_{a}{\alpha_{a}}^{2}...\beta_a\alpha_a^{d-1}, \quad\alpha_b\beta_b\beta_{b}{\alpha_{b}}^{2}...\beta_b\alpha_b^{d-1}\big)}+\Big(\frac{\beta_a}{\beta_b} + \frac{\beta_b}{\beta_a}\Big)\\ 
                 &+\Big(\frac{\alpha_a}{\alpha_b} + \frac{\alpha_b}{\alpha_a}\Big)
                 +|\bar y_1^a-\bar y_1^b|\bigg(\frac{1}{\beta_a}+\frac{1}{\beta_b}\bigg)
                 +|\bar x_1^a-\bar x_1^b|\bigg(\frac{1}{\alpha_a}+\frac{1}{\alpha_b}\bigg)\\
                 &+\sum_{m=2}^{d} \frac{|\sum_{i=1}^{m} \binom{m}{i}{(-t_{a})}^{m-i}(\bar y_{i}^b-\bar y_{i}^a)|}{\beta_a\alpha_{a}^{m-1}}\\
                 &+\sum_{m=2}^{d} \frac{|\sum_{i=1}^{m} \binom{m}{i}{(-t_{b})}^{m-i}(\bar y_{i}^a-\bar y_{i}^b)|}{\beta_b\alpha_{b}^{m-1}}\\
                 &+\sum_{m=2}^{d} \frac{|\sum_{i=1}^{m} \binom{m}{i}{(-t_{a})}^{m-i}(\bar x_{i}^b-
                 \bar x_{i}^{a})+(\bar x_{1}^a-\bar x_{1}^b)^m|}{\beta_a\alpha_{a}^{m-1}}\\
                 &+\sum_{m=2}^{d} \frac{|\sum_{i=1}^{m} \binom{m}{i}{(-t_{b})}^{m-i}(\bar x_{i}^a-
                 \bar x_{i}^{b})+(\bar x_{1}^b-\bar x_{1}^a)^m|}{\beta_b\alpha_{b}^{m-1}}.
\end{split}
\end{equation}
\end{itemize}
\end{definition}

A few comments are in order. Note that we have defined this distance $d$ for only two of the four possible cases, only for ($\alpha_a\leq\beta_a, \alpha_b\leq\beta_b$) and ($\beta_a<\alpha_a, \beta_b<\alpha_b$). This is because in our analysis we need to use the distance between only these two types of paraballs. So from now on when when we talk about distance between paraballs it would be one of these two cases. Note that $d$ is not a distance on the set of all paraballs, simply because for any paraball $B$, $d(B,B)=5$. But as we shall see that this is not of any significance to our analysis, for we shall use the properties of $d$ only when the distance between two paraballs is large. Note that our ``distance" function, $d$ is not a pseudo-distance either, as it does not satisfy the properties of a pseudo-distance, So for the lack of a better term we shall call it a mock-distance.

In the first term in the expression we compare the $(d-1)$-dimensional volume of the cross sections of the paraballs. The second and
the third terms measure the ratio between the lengths of the bases of the paraballs and the dual paraballs respectively. The fourth term measures the distance between the first coordinates of the centers of the paraballs and the fifth term for the centers of the dual paraballs. The sixth and the seventh term measure how far are the centers of each paraball from the other paraball. Likewise the eighth and ninth terms measure how far are the centers of the dual paraball from the other dual paraball.

We shall see in the proof of proposition \ref{distintersection} that the third, eighth and the ninth terms are redundant, in the sense that these are essentially dominated by first and second, sixth and the seventh term respectively. But we include these terms to make the mock-distance symmetric i.e. $d(B^a, B^b)=d({B^a}^{*}, {B^b}^{*})$. In addition we have the following property of this mock-distance.

\begin{lemma}
For every pair of paraballs $B^a, B^b$ and $\phi\in G_d$ we have
$$
d(B^a, B^b)=d(\phi^*(B^a), \phi^*(B^b)).
$$
\end{lemma}

\begin{proof}
It is enough to prove the stated equality when $\phi$ is either a translation or scaling or gliding along $h$. Let $B^{a}=B(\bar x^{a}, t_{a},\alpha_{a},\beta_{a})$ and $B^{b}=B(\bar x^{b}, t_{b},\alpha_{b},\beta_{b})$ be two paraballs. When $\phi$ is a translation i.e. $\phi(x)=x+v$ for some $v\in\rd$, then
$\phi^*(B^{a})=B(\bar x^{a}+v, t_{a},\alpha_{a},\beta_{a})$ and $\phi^*(B^{b})=B(\bar x^{b}+v, t_{b},\alpha_{b},\beta_{b})$. When $\phi$ is a scaling i.e. $\phi=S_r$ for some $r>0$, then $\phi^*(B^{a})=B(S_r(\bar x^{a}), rt_{a},r\alpha_{a},r\beta_{a})$ and $\phi^*(B^{b})=B(S_r(\bar x^{b}), rt_{b},r\alpha_{b},r\beta_{b})$. In both these cases each term in the Definition~\ref{distform} remains unchanged. Therefore $d(B^a, B^b)=d(\phi^*(B^a), \phi^*(B^b))$.

Let us now assume that $\phi$ is a gliding along $h$ i.e. $\phi(x)=G_{t_0}(x)$ for some $t_0\in\rl$. Then $\phi^*(B^{a})=B(G_{t_0}(\bar x^{a}), t_{a}+t_0,\alpha_{a},\beta_{a})$ and $\phi^*(B^{b})=B(G_{t_0}(\bar x^{b}), t_{b}+t_0,\alpha_{b},\beta_{b})$. So the first five terms in $d(\phi^*(B^a), \phi^*(B^b))$ remain same as the first five terms in $d(B^a),(B^b))$. If $\alpha_a\leq\beta_a$ then the sixth term in 
$d(\phi^*(B^a), \phi^*(B^b))$ is
\begin{equation}
\begin{split}
&\sum_{m=2}^{d}\frac{|\sum_{i=1}^{m} \binom{m}{i}{(-t_{a}-t_0)}^{m-i}(G_{t_0}(\bar y^b-\bar y^{a}))_i-(\bar y_{1}^b-\bar y_{1}^a)^m|}{\alpha_a\beta_{a}^{m-1}}\\
&=\sum_{m=2}^{d}\frac{|(G_{-t_0-t_a}G_{t_0}(\bar y^b-\bar y^{a}))_m-(\bar y_{1}^b-\bar y_{1}^a)^m|}{\alpha_a\beta_{a}^{m-1}}\\
&=\sum_{m=2}^{d}\frac{|(G_{-t_a}(\bar y^b-\bar y^{a}))_m-(\bar y_{1}^b-\bar y_{1}^a)^m|}{\alpha_a\beta_{a}^{m-1}}\\
&=\sum_{m=2}^{d} \frac{|\sum_{i=1}^{m} \binom{m}{i}{(-t_{a})}^{m-i}(\bar y_{i}^b-\bar y_{i}^{a})-(\bar y_{1}^b-\bar y_{1}^a)^m|}{\alpha_a\beta_{a}^{m-1}},
\end{split}
\end{equation}
which is the also sixth term in $d(B^a,B^b)$. Similarly when $\beta_a<\alpha_a$ the sixth term remains unchanged. By similar arguments one can prove that all other terms remain unchanged in $d(\phi^*(B^a), \phi^*(B^b))$. This finishes the proof.
\end{proof}

\begin{proposition}\label{distintersection}
There exists a constant $C<\infty$ which depends only on the dimension d, 
such that for any two paraballs $B^a,B^b$
$$
d(B^{a},B^{b})\leq C\Bigg(\frac{\max \big(|B^{a}|, |B^{b}|\big)}{|B^{a}\cap B^{b}|}\Bigg)^C.
$$
\end{proposition}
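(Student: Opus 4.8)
The plan is to prove the contrapositive estimate directly: if $d(B^a,B^b)$ is large, then $|B^a\cap B^b|$ must be small relative to $\max(|B^a|,|B^b|)$, with polynomial dependence. Since by the last remark before the proposition the pseudo-distance is invariant under the action of any $\phi\in G_d$, and since the group elements (translation, scaling $S_\lambda$, gliding $G_{t_0}$) also preserve the ratio $\max(|B^a|,|B^b|)/|B^a\cap B^b|$ up to constant Jacobian factors, I would first normalize: apply an element of $G_d$ carrying $B^a$ to one of the standard forms $B(0,0,\alpha,\beta)$ with center at the origin, $t_0=0$, and (after a further scaling) $\max(\alpha,\beta)=1$. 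This reduces the problem to estimating $d(B(0,0,\alpha,\beta),B^b)$ for an arbitrary second paraball $B^b=B(\bar x^b,t_b,\alpha_b,\beta_b)$, which is exactly the setting in which the explicit defining inequalities for the paraballs (the bulleted characterizations after the definition) are cleanest.

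The core of the argument is a pointwise/volume dichotomy. Suppose $|B^a\cap B^b|\ge \eta\max(|B^a|,|B^b|)$; I want to bound each of the (finitely many) summands in the definition of $d$ by $C\eta^{-C}$. The first term, comparing the $(d-1)$-dimensional cross-sectional volumes (equivalently, since $|B|\sim \alpha\beta\cdot\alpha\beta^2\cdots$ up to a length factor, comparing the anisotropic volumes), is immediate: if the intersection has volume at least $\eta\max(|B^a|,|B^b|)$ then in particular $|B^a|$ and $|B^b|$ are within a factor $\eta^{-1}$ of each other, which after unwinding the product formulas for $|B|$ controls the first, second, and third ratio-terms. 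For the remaining terms — the ones measuring how far the center of one paraball (resp. dual paraball) lies from the other in the adapted coordinates $z\mapsto \sum_i\binom{m}{i}(-t_0)^{m-i}(z_i-\bar z_i)$ — I would pick a point $p$ in $B^a\cap B^b$; because the intersection is nonempty and comparable in size to each $B^a,B^b$, the defining inequalities for $B^a$ force $p$ to satisfy the $\alpha_a\beta_a^{m-1}$-type bounds, while membership of $p$ in $B^b$ forces the $t_b$-adapted coordinates of $p$ to be small; comparing these, using the triangle inequality for the (polynomial) coordinate changes and the fact that $\beta_a\sim\beta_b$, $\alpha_a\sim\alpha_b$ from the volume comparison, yields that the centers of $B^a$ and $B^b$ are within $O(\eta^{-C})$ of each other in each adapted coordinate, which is precisely what the sixth through ninth terms measure. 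The change-of-variables polynomial identity $h(t+t_0)=G_{t_0}h(t)+h(t_0)$ is what makes these coordinate comparisons algebraically consistent between the $t_a$-frame and the $t_b$-frame.

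The main obstacle will be handling the interface between the two regimes $\alpha\le\beta$ and $\beta<\alpha$ (and the mixed case where $B^a$ is of one type and $B^b$ of the other): the defining inequalities and the distance formula are written piecewise, and one must check that a large intersection actually forces $B^a$ and $B^b$ to be of the same type, or else produce the bound across the types. I expect this to require a separate short lemma: a paraball of type $\alpha\le\beta$ and one of type $\beta'<\alpha'$ can have comparable-to-full intersection only if the parameters are all of comparable size (so the distinction collapses), since otherwise one paraball is "long and thin in a direction transverse to the other." A secondary technical point is bookkeeping the constant $C$: every step loses only a fixed power of $\eta^{-1}$ and there are only $O(d)$ summands, so the final $C$ depends only on $d$; I would state this explicitly at the end rather than track it through. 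The redundancy of the third, eighth, and ninth terms (already flagged in the text) should be verified here as a byproduct, by showing they are dominated by the first/second and sixth/seventh terms respectively via the same comparison-at-a-common-point argument.
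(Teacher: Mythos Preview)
Your overall architecture (normalize by $G_d$, then bound term by term) matches the paper, and you correctly flag the redundancy of the third, eighth, and ninth summands. However, two of your key steps do not work as stated.

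First, the claim that ``$|B^a|$ and $|B^b|$ within a factor $\eta^{-1}$ controls the first, second, and third ratio terms'' is false. Since $|B(\bar x,t_0,\alpha,\beta)|\sim \alpha^{d-1}\beta^{1+\frac{d(d-1)}{2}}$ (in the regime $\alpha\le\beta$), comparability of volumes only controls a single monomial in $(\alpha_a/\alpha_b,\beta_a/\beta_b)$, not the two ratios separately; one could have $\alpha_a/\alpha_b$ enormous compensated by $\beta_a/\beta_b$ tiny. The paper instead bounds the $\beta$--ratio (term~2) by a slab argument: since any $y\in B$ satisfies $|y_1-\bar y_1|\le\beta$, the projection of $B^a\cap B^b$ to the first coordinate has length $\le 2\min(\beta_a,\beta_b)$, which already forces $|B^a\cap B^b|\lesssim (\beta_a/\beta_b+\beta_b/\beta_a)^{-1}\max(|B^a|,|B^b|)$. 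Only then does the first term together with the second control the $\alpha$--ratio.

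Second, and more seriously, the ``pick a point $p\in B^a\cap B^b$ and use the triangle inequality'' strategy is not enough for the fifth, sixth, and seventh terms. For the fifth term, membership of a single point $(w,s')$ in both paraballs gives, after subtracting the $m=2$ defining inequalities, a constraint of the form $|2w(\bar x^a_1-\bar x^b_1)-c|\le 2\max(\alpha_a\beta_a,\alpha_b\beta_b)$; at a single $w$ this says nothing about $|\bar x^a_1-\bar x^b_1|$. What makes the argument go through is that this is a \emph{linear} constraint in $w$, so if $|\bar x^a_1-\bar x^b_1|$ is large, the set of admissible $w$ has small measure, forcing $|B^a\cap B^b|$ small. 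For the sixth and seventh terms the paper does the analogous thing one degree up: it forms the one--variable polynomial $P(z)=Q^a_m(z,t(z))$ (where $t(z)$ is chosen so that $(z,t(z))$ lies on the central curve of $B^b$) and uses a sublevel--set estimate for polynomials of bounded degree to conclude that $\{z:|P(z)|\ \text{small}\}$ has small measure. The quantitative \emph{size} of the intersection enters precisely through these measure estimates; nonemptiness plus coordinate--change identities will not close the argument.
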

\begin{proof}
The proof of this lemma will be an adaptation of the proof of Lemma $3.7$ in \cite{MC5}.
We shall give the proof for the case $\alpha_a\leq\beta_a$ and $\alpha_b\leq\beta_b$ for the 
paraballs $B^{a}=B(\bar{x}^a,t_a,\alpha_a,\beta_a)$ and $B^{b}=B(\bar{x}^b,t_b,\alpha_b,\beta_b)$ respectively, the other case being identical. Without loss of generality we 
may assume that $d(B^a,B^b)$ is large. Otherwise $d(B^a,B^b)$ would be bounded by a 
large constant $C$. For any paraball $B(\bar{x}, t_{0},\alpha,\beta)$, $y\in B$ implies
$|y_{1}-\bar{y}_{1}|\leq\beta$ where $\bar{y}=\bar{x}+h(t_0)$. This implies that 
\begin{multline*}
|B^{a}\cap B^{b}|\leq \min (\alpha_{a}\beta_{a}\alpha_{a}{\beta_{a}}^{2}...\alpha_a\beta_a^{d-1},\,
\alpha_b\beta_b\alpha_{b}{\beta_{b}}^{2}...\alpha_b\beta_b^{d-1}) \text{min}(\beta_a, \beta_b)\\
\quad\leq \frac{\text{min} (\alpha_{a}\beta_{a}\alpha_{a}{\beta_{a}}^{2}...\alpha_a\beta_a^{d-1},\,
\alpha_b\beta_b\alpha_{b}{\beta_{b}}^{2}...\alpha_b\beta_b^{d-1})}{ \text{max} 
(\alpha_{a}\beta_{a}\alpha_{a}{\beta_{a}}^{2}...\alpha_a\beta_a^{d-1},\,
\alpha_b\beta_b\alpha_{b}{\beta_{b}}^{2}...\alpha_b\beta_b^{d-1})}\text{max}(|B^{a}|, |B^{b}|).\\
\end{multline*}
If $\frac{\max\big(\alpha_{a}\beta_{a}\alpha_{a}{\beta_{a}}^{2}...\alpha_a\beta_a^{d-1},\,
\alpha_b\beta_b\alpha_{b}{\beta_{b}}^{2}...\alpha_b\beta_b^{d-1}\big)}{\min\big(\alpha_{a}
\beta_{a}\alpha_{a}{\beta_{a}}^{2}...\alpha_a\beta_a^{d-1},\,\alpha_b\beta_b\alpha_{b}
{\beta_{b}}^{2}...\alpha_b\beta_b^{d-1}\big)}\geq c\,d(B^{a}, B^{b})$, this concludes the proof. 

For the paraball $B^a=B(\bar{x}^a,t_a,\alpha_a,\beta_a)$, we define
$$
S^a=(-\beta_a, \beta_a).
$$
Similarly, we define $S^b$ for $B^b$. Now
\begin{multline*}
\quad\quad\quad \quad\quad\quad\quad|B^{a}\cap B^{b}|\leq \frac{|\big(\bar{y}_{1}^{a}+S^{a}\big)\cap\big(\bar{y}_{1}^{b}+S^{b}\big)|}{\text{max}(|S^{a}|, |S^{b}|)} \text{max}(|B^{a}|, |B^{b}|)\\
                          \quad\quad\leq \frac{\text{min} (|S^{a}|, |S^{b}|)}{ \text{max} (|S^{a}|, |S^{b}|)} \text{max}(|B^{a}|, |B^{b}|)\\
                          =\frac{\text{min} (\beta_{a}, \beta_{b})}{ \text{max} (\beta_{a}, \beta_{b})} \text{max}(|B^{a}|, |B^{b}|)\\
                          \quad\sim\bigg(\frac{\beta_a}{\beta_b} + \frac{\beta_b}{\beta_a}\bigg)^{-1}\text{max}(|B^{a}|, |B^{b}|).\\
 \end{multline*} 
Therefore the desired inequality follows if $\frac{\beta_a}{\beta_b}+\frac{\beta_b}{\beta_a}\geq c\,d(B^{a}, B^{b})$.
In addition, for some absolute constant $C$,
$$
|\big(\bar{y}_{1}^{a}+S^{a}\big)\cap\big(\bar{y}_{1}^{b}+S^{b}\big)|\leq C \bigg[|\bar{y}_{1}^{a}-\bar{y}_{1}^{b}|
\bigg(\frac{1}{\beta_{a}}+\frac{1}{\beta_b}\bigg)\bigg]^{-1} \text{max}(|S^{a}|, |S^{b}|).
$$
Hence the desired inequality follows if the fourth term is $\geq c\,d(B^{a}, B^{b})$.

Let us now consider the third term. We can assume that the first two terms are small i.e.
\begin{itemize}
\item $\frac{\beta_a}{\beta_b} + \frac{\beta_b}{\beta_a}\leq c' d(B^a,B^b)^{c'}$;
\item $\frac{\max\big(\alpha_{a}\beta_{a}\alpha_{a}{\beta_{a}}^{2}...\alpha_a\beta_a^{d-1}, \quad\alpha_b\beta_b\alpha_{b}{\beta_{b}}^{2}...\alpha_b\beta_b^{d-1}\big)}    {\min\big(\alpha_{a}\beta_{a}\alpha_{a}{\beta_{a}}^{2}...\alpha_a\beta_a^{d-1}, \quad\alpha_b\beta_b\alpha_{b}{\beta_{b}}^{2}...\alpha_b\beta_b^{d-1}\big)}\leq c' d(B^a,B^b)^{c'}$.
\end{itemize}
We shall prove that this implies the third term is also small i.e.
$$
\frac{\alpha_a}{\alpha_b} + \frac{\alpha_b}{\alpha_a}\leq cd(B^a,B^b)^c.
$$
WLOG let us assume that $\beta_a\leq\beta_b$. Since $\frac{\beta_a}{\beta_b} + \frac{\beta_b}{\beta_a}\leq c' d(B^a,B^b)^{c'}$,
this implies 
$$
\beta_a\leq\beta_b\leq c'd(B^a,B^b)^{c'}\beta_a.
$$ 
Therefore
\begin{equation}
\begin{split}
\Big(\frac{\alpha_a}{\alpha_b} + \frac{\alpha_b}{\alpha_a}\Big)^{d-1}&\sim\frac{\text{max}({\alpha_a}^{d-1},{\alpha_b}^{d-1})}{\text{min}({\alpha_a}^{d-1},{\alpha_b}^{d-1})}\\
&\leq\frac{\max\big(\alpha_{a}\beta_{b}\alpha_{a}{\beta_{b}}^{2}...\alpha_a\beta_b^{d-1}, \quad\alpha_b\beta_b\alpha_{b}{\beta_{b}}^{2}...\alpha_b\beta_b^{d-1}\big)}    {\min\big(\alpha_{a}\beta_{a}\alpha_{a}{\beta_{a}}^{2}...\alpha_a\beta_a^{d-1}, \quad\alpha_b\beta_a\alpha_{a}{\beta_{a}}^{2}...\alpha_b{\beta_a}^{d-1}\big)}\\
&\sim\bigg(c'd(B^a,B^b)^{c'}\bigg)^{C}\frac{\max\big(\alpha_{a}\beta_{a}\alpha_{a}{\beta_{a}}^{2}...\alpha_a\beta_a^{d-1}, \quad\alpha_b\beta_b\alpha_{b}{\beta_{b}}^{2}...\alpha_b\beta_b^{d-1}\big)}    {\min\big(\alpha_{a}\beta_{a}\alpha_{a}{\beta_{a}}^{2}...\alpha_a\beta_a^{d-1}
\quad\alpha_b\beta_b\alpha_{b}{\beta_{b}}^{2}...\alpha_b\beta_b^{d-1}\big)}.
\end{split}
\end{equation}
We choose $c'$ such that $\bigg(c'd(B^a,B^b)^{c'}\bigg)^{C}<cd(B^a,B^b)^c$.

Now we consider the fifth term. Suppose that $|\bar{x}_{1}^{a}-\bar{x}_{1}^{b}|\geq c d(B^{a}, B^{b}) \alpha_{a}$.
If $(w, s_{1},..., s_{d-1})\in\mathbb{R}\times\mathbb{R}^{d-1}$ belongs to $B^{a}\cap B^{b}$, then one has
$|s_{1}-\bar{x}_{2}^{a}-(w-\bar{x}_{1}^{a})^{2}|\leq \alpha_{a}\beta_{a}$ and 
$|s_{1}-\bar{x}_{2}^{b}-(w-\bar{x}_{1}^{b})^{2}|\leq \alpha_{b}\beta_{b}$. Subtracting gives us
$$
|2w\, (\bar{x}_{1}^{a}-\bar{x}_{1}^{b})-d|\leq 2 \max (\alpha_{a}\beta_{a}, \alpha_{b}\beta_{b}),
$$
where $d=2(\bar{x}_{1}^{a})^{2}-2\bar{x}_{1}^{a}\bar{y}_{1}^{b}$. Since $|\bar x^a_1-\bar x^b_1|\geq c d(B^a,B^b)\alpha_a$,
this implies
$$
\big|\{w\in S^{a}: |2w(\bar{x}_{1}^{a}-\bar{x}_{1}^{b})-d|\leq 2 \max (\alpha_{a}\beta_{a}, \alpha_{b}\beta_{b}) \}\big|\leq
C d(B^{a}, B^{b})^{-1}|S^{a}|
$$
uniformly for all $d\in\mathbb{R}$. This implies the required upper bound on $|B^{a}\cap B^{b}|$.

Next let us assume that for some $m$ with $2\leq m\leq d$, we have
$$
\frac{|\sum_{i=1}^{m} \binom{m}{i}{(-t_{a})}^{m-i}(\bar y_{i}^b-\bar y_{i}^{a})-(\bar y_{1}^b- \bar y_{1}^a)|}{\alpha_a\beta_{a}^{m-1}}\geq c d(B^{a}, B^{b}).
$$ 
We define polynomials $Q^{a}_{j}$ and $Q^{b}_{j}$ on $\mathbb{R}^{d}$ by
\begin{multline*}
\quad\qquad\quad\qquad\qquad Q^{a}_{j}(y)=\sum_{i=1}^{j} \binom{j}{i}{(-t_{a})}^{j-i}(y_{i}-\bar y_{i}^{a})-(y_{1}-\bar y_{1}^a)^j\\
Q^{b}_{j}(y)=\sum_{i=1}^{j} \binom{j}{i}{(-t_{b})}^{j-i}(y_{i}-\bar y_{i}^{b})-(y_{1}-\bar y_{1}^b)^j\\
\end{multline*}
for each $2\leq j\leq d$.
For every $z\in\mathbb{R}$ we define $t(z)\in\mathbb{R}^{d-1}$ so that $Q^{b}_{j}(z, t(z))=0$ for each $j$. Now we
define a one variable polynomial $P(z)=Q^{a}_{m}(z, t(z))$.
Observe that 
$$
|P(z)|>\,\alpha_{a}\beta_{a}^{m-1}+\alpha_{b}\beta_{b}^{m-1}\quad\text{implies}\quad B^{a}\cap B^{b}\cap 
(\{z\}\times \mathbb{R}^{d-1})=\emptyset.
$$
Note that $P(\bar x_1^{b})\geq cd(B^{a}, B^{b}) \alpha_{a}\beta_{a}^{m-1}$. Let 
$$
\epsilon=\frac{3\max (\alpha_a\beta_{a}^{m-1}, \alpha_{b}\beta_{b}^{m-1})}{d(B^{a}, B^{b})\, \alpha_{a}\beta_{a}^{m-1}}\leq 
d(B^{a}, B^{b})^{-\frac{1}{2}}.
$$
Then for all $z\in \bar x_1^b+S^{b}$ we have
$$
P(z)\geq \epsilon d(B^{a}, B^{b}) \alpha_{a}\beta_{a}^{m-1}= 3\max (\alpha_a\beta_{a}^{m-1}, \alpha_{b}\beta_{b}^{m-1})
\geq \alpha_{a}\beta_{a}^{m-1}+\alpha_{b}\beta_{b}^{m-1}
$$
except on a set of measure smaller than $C\epsilon^{c}|S^{b}|$. Therefore one has
$$
|B^{a}\cap B^{b}|\leq C\epsilon^{c}|B^{b}|\leq C d(B^{a}, B^{b})^{-c} |B^{b}|.
$$
Similar arguments give the required inequality if $\frac{\big|\sum_{i=1}^m\binom{m}{i}{(-t_{b})}^{m-i}(\bar y_{i}^a-\bar y_{i}^{b})-(\bar y_{1}^a-\bar y_{1}^b)^m\big|}{\alpha_b\beta_{b}^{m-1}}\geq cd(B^a,B^b)$.

Now let us assume that for some $2\leq m\leq d$ we have 
$$
 \frac{|\sum_{i=1}^{m} \binom{m}{i}{(-t_{a})}^{m-i}(\bar x_{i}^b-\bar x_{i}^a)|}{\alpha_a\beta_{a}^{m-1}}\geq cd(B^a,B^b)
$$
and that all the previous terms are less than $c'd(B^a,B^b)$ where $c'$ is a small positive number to be chosen precisely in a moment. 
Since both sides of the equation are invariant if we replace $(B^a,B^b)$ by $(\phi^{*}(B^a), \phi^{*}(B^b))$, we can assume that $B^a=B(0,0,\alpha_a,1)$ and $B^b=B(\bar x^b,t,\alpha_b,\beta_b)$. This implies
\begin{equation}\label{dualin}
|\bar x_m^b|=|\bar y_m^b-t^m|> cd(B^a,B^b)\alpha_a
\end{equation}
and 
\begin{itemize}
\item $\frac{\alpha_a}{\alpha_b}+\frac{\alpha_b}{\alpha_a}< c'd(B^a,B^b)$;
\item $|\bar y_1^b-t|<c'd(B^a,B^b)\text{max}(\alpha_a,\alpha_b)$;
\item $|\bar y_m^b-(\bar{y_1}^b)^m|<c'd(B^a,B^b)\alpha_a$.
\end{itemize}

This implies $t=\bar y_1^b+\mathcal{O}(c'd(B^a,B^b))\alpha_a$ which in turn implies
$$
|\bar x_m^b|=|\bar y_m^b-t^m|=|\bar y_m^b-(\bar y_1^b)^m+\mathcal{O}(c'd(B^a,B^b))\alpha_a|<\mathcal{O}(c'd(B^a,B^b))\alpha_a.
$$
We choose $c'$ small enough such that this contradicts \ref{dualin}.
\end{proof}

We also have an almost triangle inequality.
\begin{lemma}\label{triangle}
There exists a constant $C<\infty$ depending only on the dimension $d$ 
such that for any three paraballs $B^{a},B^{b},B^{d}$ we have 
$$
d(B^{a},B^{b})\leq C \big(d(B^{a}, B^{d})^{C} + d(B^{d},B^{b})^{C}\big).
$$
\end{lemma} 

\begin{proof}
Without loss of generality we may assume that $B^{d}=B(0,0,\alpha,1)$ or $B(0,0,1,\beta)$.
The parameters specifying $B^{a}$ and $B^{b}$ are controlled by $\eta_{1}=d(B^{a}, B^d)$ and 
$\eta_{2}=d(B^{b}, B^d)$ respectively. Also for any three positive numbers $\beta_{1}, \beta_{2}, \beta$, 
one has $\frac{\beta_{1}}{\beta_{2}}+\frac{\beta_{2}}{\beta_{1}}\leq C \bigg(\big(\frac{\beta_{1}}{\beta}+\frac{\beta}{\beta_{1}}\big)^{C}+
\big(\frac{\beta_{2}}{\beta}+\frac{\beta}{\beta_{2}}\big)^{C}\bigg)$. Therefore $d(B^{a}, B^{b})$ is bounded from 
above by $C\eta^{C}$ where $\eta=\max \big(d(B^{a}, B^{d}),\,d(B^{d},B^{b})\big)$.
\end{proof}

We also have the following covering property.
\begin{lemma}
There exists a constant $C<\infty$ depending only on the dimension $d$ 
such that for any two paraballs $B^{a},B^{b}$ we have 
$$
B^{a}\subset C(d(B^{a},B^{b}))^C B^{b}
$$
\end{lemma}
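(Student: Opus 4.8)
The plan is to normalize one of the paraballs by exploiting the $G_d$-invariance of the pseudo-distance, read off control on the parameters of the other directly from the nine groups of terms defining $d(B^a,B^b)$, and then verify the inclusion against the explicit inequalities that describe a paraball. Write $\eta=d(B^a,B^b)$, which we may assume is large. Since $d$ is defined only between paraballs in the same regime and the two regimes are symmetric, assume $\alpha_a\le\beta_a$ and $\alpha_b\le\beta_b$. Every paraball in this regime is the image of a base set $B(0,0,\alpha',1)$ under a composition $\phi$ of a translation, a gliding and a scaling $S_r$; such a $\phi$ carries paraballs to paraballs, commutes with the dilation \eqref{scale}, preserves set inclusion, and preserves $d$, so it suffices to treat $B^a=B(0,0,\alpha',1)$ with $0<\alpha'\le1$. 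Write $B^b=B(\bar x^b,t_b,\alpha_b,\beta_b)$ and $\bar y^b=\bar x^b+h(t_b)$.

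First I would substitute the normalized parameters of $B^a$ (namely $t_a=\bar x^a=\bar y^a=0$, $\alpha_a=\alpha'$, $\beta_a=1$) into the formula for $d(B^a,B^b)$. Each of the nine groups of terms then reduces to a single quantity in the parameters of $B^b$, from which one reads off at once that $\eta^{-1}\le\beta_b\le\eta$, that $\eta^{-1}\alpha'\le\alpha_b\le\eta\alpha'$, that $|\bar y_1^b|\le\eta$, that $|\bar x_m^b|\le\eta\alpha'$ for $1\le m\le d$, and that $|\bar y_m^b-(\bar y_1^b)^m|\le\eta\alpha'$ for $2\le m\le d$; in particular $|t_b|=|\bar y_1^b-\bar x_1^b|\le2\eta$ and $|\bar y_i^b|\le2\eta^i$. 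Now fix $y\in B^a$, so $|y_1|\le1$ and $|y_m-y_1^m|\le\alpha'$ for $2\le m\le d$, hence $|y_i|\le2$. We must show $y\in C\eta^C B^b=B(\bar x^b,t_b,C\eta^C\alpha_b,C\eta^C\beta_b)$. The inequality $|y_1-\bar y_1^b|\le C\eta^C\beta_b$ is immediate from $|y_1|+|\bar y_1^b|\le1+\eta$ and $\beta_b\ge\eta^{-1}$, so the content is the bound
\[
\Big|\textstyle\sum_{i=1}^m\binom{m}{i}(-t_b)^{m-i}(y_i-\bar y_i^b)-(y_1-\bar y_1^b)^m\Big|\le(C\eta^C\beta_b)^{m-1}(C\eta^C\alpha_b)\qquad(2\le m\le d).
\]

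For this I would write $y_i=y_1^i+r_i$ and $\bar y_i^b=(\bar y_1^b)^i+s_i$ with $|r_i|\le\alpha'$, $|s_i|\le\eta\alpha'$ and $r_1=s_1=0$, and apply the binomial identity $\sum_{i=0}^m\binom{m}{i}(-t)^{m-i}z^i=(z-t)^m$ to rewrite the left-hand side as
\[
\Big[(y_1-t_b)^m-(\bar y_1^b-t_b)^m-(y_1-\bar y_1^b)^m\Big]+\textstyle\sum_{i=1}^m\binom{m}{i}(-t_b)^{m-i}(r_i-s_i).
\]
Since $\bar y_1^b-t_b=\bar x_1^b$ and $y_1-t_b=(y_1-\bar y_1^b)+\bar x_1^b$, the bracket equals $\sum_{k=1}^{m-1}\binom{m}{k}(y_1-\bar y_1^b)^{m-k}(\bar x_1^b)^k$, each term of which is divisible by $\bar x_1^b$; using $|y_1-\bar y_1^b|\le2\eta$, $|\bar x_1^b|\le\eta\alpha'$ and $\alpha'\le1$, it is $\lesssim_d\eta^d\alpha'$, and the remaining sum is $\lesssim_d\eta^d\alpha'$ as well. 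Because $\alpha'\le\eta\alpha_b\le\eta^m\beta_b^{m-1}\alpha_b$ (from $\alpha_b\ge\eta^{-1}\alpha'$ and $\beta_b\ge\eta^{-1}$), the left-hand side is $\lesssim_d\eta^{2d}\beta_b^{m-1}\alpha_b$, which gives the claim once $C$ is taken large enough in terms of $d$. The complementary regime $\beta_a<\alpha_a$ is handled identically, normalizing $B^a$ to $B(0,0,1,\beta')$ and using the corresponding description of paraballs, for which the extra polynomial term appears in the dual paraball rather than in the paraball.

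The only genuine obstacle is the displayed polynomial estimate. A crude triangle inequality applied to $\sum_i\binom{m}{i}(-t_b)^{m-i}(y_i-\bar y_i^b)-(y_1-\bar y_1^b)^m$ produces only a bound of size $\sim\eta^C$, which is useless against a target radius proportional to the thin parameter $\alpha_b\sim\alpha'$. The point is that, after the binomial cancellation, the ``non-thin'' part of the polynomial collapses into terms each divisible by $\bar x_1^b$, and $\bar x_1^b$ itself lives at scale $\eta\alpha'$, i.e. in the thin direction — precisely the information carried by the fifth and eighth groups of terms in $d(B^a,B^b)$.
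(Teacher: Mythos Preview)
Your proof is correct and follows essentially the same approach as the paper: exploit $G_d$-invariance of both the pseudo-distance and the dilation \eqref{scale} to normalize one of the paraballs, read off from the nine groups of terms in $d(B^a,B^b)$ that the parameters of the other are controlled by $C\eta^C$, and then verify the inclusion directly from the defining inequalities. The only cosmetic difference is that the paper normalizes $B^b$ to $B(0,0,\alpha,1)$ while you normalize $B^a$; either choice works, and your write-up in fact supplies the ``elementary algebra'' (the binomial cancellation isolating a factor of $\bar x_1^b$) that the paper leaves to the reader.
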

where $C(d(B^{a},B^{b}))^C B^{b}$ is defined as in (\ref{scale}).
\begin{proof}
Without loss of generality we may assume that $B^{b}=B(0,0,\alpha,1)$. Let $B^{a}=B(\bar{x}, t_{0}, \alpha_1, \beta_1)$ and let 
$\eta=d(B^{a}, B^{b})$. Then there is a constant $C$ such that the parameters corresponding to $B^{a}$ are controlled by
$C\eta^C$. After some elementary algebra it follows that $B^{a}\subset C\eta^C B^{b}$.
\end{proof}

In the next Proposition we shall prove that $(B(0,0,1,1), B^{*}(0,0,1,1))$ (and hence $(B(0,0,\alpha,\alpha), B^{*}(0,0,\alpha,\alpha))$) are quasiextremal pairs for $T:L^{p_\theta}\rightarrow L^{q_\theta}$ for every $\theta\in [0,1]$. In Lemma \ref{compa} we shall prove that for $0<\theta<1$ these are essentially the only quasi-extremal pairs. For $\theta=0$, in addition to the above we also have 
$\big(B(0,0,\alpha,1),B^{*}(0,0,\alpha,1)\big)$ for every $0<\alpha<1$ and for $\theta=1$, we have $\big(B(0,0,1,\beta),B^{*}(0,0,1,\beta)\big)$ for every $0<\beta<1$ which are quasiextremal pairs for $T$.

\begin{proposition}{\label{FQE}}
There exists $c>0$ which depends only on the dimension $d$ with the following property. 
\begin{itemize}
\item $\big(B(0,0,\alpha,1), B^{*}(0,0,\alpha,1)\big)$ is a $c$-quasi-extremal pair for 
$T:L^{p_0}\rightarrow L^{q_0}$ for all $0<\alpha<1$;
\item $\big(B(0,0,1,\beta), B^{*}(0,0,1,\beta)\big)$ is a $c$-quasi-extremal pair for 
$T:L^{p_1}\rightarrow L^{q_1}$ for all $0<\beta<1$;
\item $\big(B(0,0,\alpha,\alpha), B^{*}(0,0,\alpha,\alpha)\big)$ is a $c$-quasi-extremal pair for 
$T:L^{p_\theta}\rightarrow L^{q_\theta}$ for all $0\leq\theta\leq 1$ and for all $\alpha>0$.
\end{itemize}
\end{proposition}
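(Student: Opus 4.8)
The plan is to verify the quasiextremality inequality of Definition~\ref{QEP} directly for each of the three families, exploiting the positivity of $T$. Writing $\langle T\chi_{B},\chi_{B^{*}}\rangle=\int_{B^{*}}\big|\{t\in\mathbb{R}:x+h(t)\in B\}\big|\,dx$, it suffices in each case to exhibit a subset $F'\subseteq B^{*}$ with $|F'|\gtrsim|B^{*}|$ on which the inner measure is bounded below by an appropriate power of the relevant small parameter, and then to check that the resulting lower bound for $\langle T\chi_{B},\chi_{B^{*}}\rangle$ agrees, up to a constant depending only on $d$, with $|B^{*}|^{1/q_{\theta}'}|B|^{1/p_{\theta}}$; this last step is an exponent computation.

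For the balanced pair I would invoke the scaling symmetry $(S_{\alpha},S_{\alpha})\in G_{d,d}$, which fixes the quasiextremality constant and sends $(\chi_{B(0,0,1,1)},\chi_{B^{*}(0,0,1,1)})$ to positive scalar multiples of $(\chi_{B(0,0,\alpha,\alpha)},\chi_{B^{*}(0,0,\alpha,\alpha)})$, thereby reducing to $\alpha=1$. There both $B(0,0,1,1)$ and $B^{*}(0,0,1,1)$ contain a fixed cube $\{|x_{i}|\leq c_{0}\}$ about the origin, and $x+h(t)\in B(0,0,1,1)$ whenever $|x_{i}|\leq c_{0}$ and $|t|\leq c_{0}$; hence $\langle T\chi_{B},\chi_{B^{*}}\rangle\gtrsim c_{0}^{\,d+1}\sim1$, while $|B^{*}|^{1/q_{\theta}'}|B|^{1/p_{\theta}}\sim1$, uniformly for $\theta\in[0,1]$.

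For $\theta=0$ the paraball $B(0,0,\alpha,1)$ is comparable to the slab $\{|y_{1}|\leq1,\ |y_{m}-y_{1}^{m}|\leq\alpha\ (2\leq m\leq d)\}$, of volume $\sim\alpha^{d-1}$, and $B^{*}(0,0,\alpha,1)=\{|x_{i}|<\alpha\}$, of volume $\sim\alpha^{d}$. I would take $F'=\{|x_{i}|\leq\alpha/3\}$ and restrict to $|t|\leq c$ with $c$ small depending only on $d$: expanding $(x_{1}+t)^{m}=t^{m}+\sum_{k=1}^{m}\binom{m}{k}x_{1}^{k}t^{m-k}$ and using that $|x_{1}|^{k}\leq|x_{1}|\leq\alpha/3<1$ for $k\geq1$, while every cross term with $k<m$ carries a positive power of the small $t$, one gets $|(x_{m}+t^{m})-(x_{1}+t)^{m}|\leq\alpha$, i.e.\ $x+h(t)\in B(0,0,\alpha,1)$. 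Thus $\langle T\chi_{B},\chi_{B^{*}}\rangle\gtrsim\alpha^{d}$, which matches $c\,|B^{*}|^{1/q_{0}'}|B|^{1/p_{0}}\sim\alpha^{\,d/q_{0}'+(d-1)/p_{0}}$ precisely because $\frac{d}{q_{0}'}+\frac{d-1}{p_{0}}=d$. The case $\theta=1$ is the mirror image: $B(0,0,1,\beta)=\{|y_{i}|<\beta\}$ has volume $\sim\beta^{d}$, $B^{*}(0,0,1,\beta)$ is comparable to the $\beta$-neighborhood of $\{-h(-s):|s|\leq1\}$, of volume $\sim\beta^{d-1}$, and one centers the $t$-integration at $t=-x_{1}$: for $x$ in a suitably shrunken sub-neighborhood $F'\subseteq B^{*}(0,0,1,\beta)$ (with $|x_{1}|\leq\tfrac12$) and $t=-x_{1}+s$, $|s|\leq c\beta$, expanding $(-x_{1}+s)^{m}$ shows $x+h(t)\in B(0,0,1,\beta)$, so $\langle T\chi_{B},\chi_{B^{*}}\rangle\gtrsim\beta^{d-1}\cdot\beta=\beta^{d}$, matching $c\,|B^{*}|^{1/q_{1}'}|B|^{1/p_{1}}$ through $\frac{d-1}{q_{1}'}+\frac{d}{p_{1}}=d$ (the same identity rewritten using $p_{1}=q_{0}'$, $q_{1}=p_{0}'$).

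The only point that needs any care is arranging the inclusions $x+h(t)\in B$ with the \emph{exact} constants in the definition of the paraball, not merely up to a constant; this is exactly why the argument passes to the shrunken sets $F'$ and to short $t$-intervals, at a cost of only a $d$-dependent factor in $|F'|$, hence in the final constant $c$. Granting that bookkeeping, the whole proposition comes down to the two exponent identities above, which is precisely where the specific exponents $(p_{\theta},q_{\theta})$ from \eqref{pvalue} enter and the only place the argument would break for other exponents.
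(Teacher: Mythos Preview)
Your proof is correct and follows essentially the same strategy as the paper's: both establish the key lower bound $\mathcal{T}(B,B^{*})\gtrsim\alpha^{d}$ (respectively $\beta^{d}$) by shrinking one of the two sets by a dimensional constant, showing that each point of the shrunken set sees a fiber of the incidence relation of the right length lying in the other set, and then verifying the exponent identity that converts this into $c$-quasiextremality. The only cosmetic difference is that the paper shrinks $B$ to a set $B_{r}$ and for each $y\in B_{r}$ finds a $t$-interval of length $\sim\alpha$ with $y-h(t)\in B^{*}$, whereas you shrink $B^{*}$ to $F'$ and for each $x\in F'$ find a $t$-interval of length $\sim 1$ with $x+h(t)\in B$; these are dual versions of the same computation and yield the same product $\alpha^{d}$.
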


\begin{proof}
We shall write the proof of the first claim, the others being identical.
Let $B=B(0, 0,\alpha,1)$ with $0<\alpha<1$. We claim that
\begin{equation}\label{incident}
T(B, B^{*})\geq c\alpha^{d},\qquad |B|\leq \alpha^{d-1},\qquad
|B^{*}|\leq \alpha^{d}
\end{equation}
which after some elementary calculations implies that
$$
\frac{T(B, B^{*})}{|B|^{\frac{2}{d+1}}|B^{*}|^{1-\frac{2(d-1)}{d(d+1)}}}\geq c.
$$

The upper bounds on the sizes of $B$ and $B^{*}$ follow directly from the definition.
Let us fix a small number $r> 0$ (to be chosen precisely later) which depends only on $d$. Define $B_{r}$ to 
be the set of all $y\in\mathbb{R}^d$ such that
\begin{itemize}
\item $|y_{1}| \leq r$;
\item $|y_{m}-y_{1}^{m}|\leq r\alpha$ for all $1< m\leq d$.                                               
\end{itemize}
Then $|B_{r}|\geq r^d |B|$.

We want to show that if $r$ is sufficiently small then for all $y\in B_{r}$, the set of all 
$x\in\mathbb{R}^d$ such that $x\in B^{*}$ has measure at 
least $r\alpha$. Therefore $T(B, B^{*})\geq r^{d+1}\alpha |B|\geq r^{d+1}
|B|^{\frac{2}{d+1}}|B^{*}|^{1-\frac{2(d-1)}{d(d+1)}}$.

Let us fix $y\in B_r$. For each $x_{1}\in\mathbb{R}$ with $|x_{1}|< r\alpha$, we define
$x'=(x_2,x_3,...,x_d)\in\mathbb{R}^{d-1}$ by $x_{m}=y_{m}-(y_{1}-x_{1})^{m}$ for $2\leq m\leq d$, so that $(x, y)\in\Sigma$.
Now $x\in B^{*}$ if and only if 
\begin{equation}
|x_{m}|= |y_{m}-(y_{1}-x_{1})^{m}|<\alpha.
\end{equation}

Now,
\begin{equation}
\begin{split}
x_{m}&=y_{m}-(y_{1}-x_{1})^{m}\\
&=y_{1}^{m}-(y_{1}-x_{1})^{m}+\mathcal{O}{(r)}\alpha\\
&\leq\mathcal{O}{(r)}\alpha<\alpha,\\
\end{split}
\end{equation}
if we choose $r$ to be sufficiently small.
\end{proof}

\section{Quasiextremal pairs and paraballs}\label{chap3}
Let $E$ and $F$ be subsets of  $\mathbb{R}^d$ with finite positive Lebesgue measure. 
Write $\mathcal{T}(E,F)=\langle T(\chi_{E}), \chi_{F} \rangle$ and $\mathcal{T}(f,g)=\langle T(f), g\rangle$. Define
$\alpha$ and $\beta$ by 
 $$
 \alpha |E|= \beta |F|=  \langle T(\chi_{E}), \chi_{F} \rangle.
 $$
 
Then $T$ being restricted weak type $(p_0,q_0)=\big(\frac{d+1}{2},\frac{d(d+1)}{2(d-1)}\big)$ is equivalent to
$$
|E|\geq c \beta^{\frac{d(d+1)}{2}} {\bigg(\frac{\alpha}{\beta}\bigg)}^{d-1}.
$$
In addition, if $(E,F)$ is an $\epsilon$-quasiextremal pair then by definition \ref{QEP} we also have 
$$\label{upperbound}
|E|\leq c {\epsilon}^{-\textbf C} \beta^{\frac{d(d+1)}{2}} {\bigg(\frac{\alpha}{\beta}\bigg)}^{d-1}.
$$
for some $C> 0$. We aim to exploit these two inequalities simultaneously to obtain information about 
$\epsilon$-quasiextremal pairs and prove the following theorem.
\begin{theorem}\label{ballandquasi}
Let $d>2$. There exists an absolute constant $C$, depending only on $d$ such that for any $\epsilon$-quasiextremal 
pair $(E,F)$, there exists a paraball $B$ such that
$$
T(E\cap B, F\cap B^*)\geq {C}^{-1} {\epsilon}^{C} \mathcal{T}(E,F)
$$
and
$$
|B|\leq |E|\quad \text{and}\quad |B^*|\leq |F|. 
$$
\end{theorem}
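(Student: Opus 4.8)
## Proof Proposal for Theorem \ref{ballandquasi}

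The plan is to adapt the proof of the corresponding structural lemma in \cite{MC5} (convolution with the paraboloid), with Christ's ``plates'' replaced by the paraballs of the preceding section; we give the argument for the pair $(p_0,q_0)$, the cases $\theta=1$ and $0<\theta<1$ being analogous (and, for $0<\theta<1$, simpler because the family of quasiextremal pairs reduces to $(B(0,0,\alpha,\alpha),B^*(0,0,\alpha,\alpha))$). The two inputs are the restricted weak type lower bound $|E|\geq c\beta^{d(d+1)/2}(\alpha/\beta)^{d-1}$ and its reverse $|E|\leq c\epsilon^{-C}\beta^{d(d+1)/2}(\alpha/\beta)^{d-1}$ coming from quasiextremality. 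Since only a bounded number of losses of the form $c\,\epsilon^{C}$ will be incurred, it suffices to produce the conclusion with one such loss.

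First I would refine the pair. Writing $\mathcal{T}(E,F)=\int_E|\{t:x+h(t)\in F\}|\,dx$ shows that the fibers of $E$ have average length $\alpha$; combining a Chebyshev argument for this identity with the dual one $\mathcal{T}(E,F)=\int_F|\{t:y-h(t)\in E\}|\,dy$ (average fiber length $\beta$), and discarding the part of each fiber lying outside a fixed large multiple of its average --- whose total contribution is controlled by the $L^{p_0}\to L^{q_0}$ bound for $T$ --- produces $E_1\subseteq E$, $F_1\subseteq F$ with $|E_1|\geq c\epsilon^{C}|E|$, $|F_1|\geq c\epsilon^{C}|F|$, $\mathcal{T}(E_1,F_1)\geq c\,\mathcal{T}(E,F)$, and such that every $x\in E_1$ satisfies $|\{t:x+h(t)\in F_1\}|\geq c\epsilon^{C}\alpha$ and every $y\in F_1$ satisfies $|\{t:y-h(t)\in E_1\}|\geq c\epsilon^{C}\beta$.

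Next I would anchor and grow a paraball. Fixing $x^*\in E_1$, the set $I_0=\{t:x^*+h(t)\in F_1\}$ has measure $\gtrsim\epsilon^{C}\alpha$ and, after one further localization, may be taken inside an interval of comparable length; its image $\{x^*+h(t):t\in I_0\}$ is a one-dimensional slice of $F_1$, and running the incidence relation $s\mapsto y-h(s)$ from each such point fills a further direction inside $E_1$. Iterating this alternation finitely many times (the number depending only on $d$) and invoking the quantitative nondegeneracy of the moment curve --- the relevant Jacobian determinants being comparable to Vandermonde products $\prod_{i<j}(t_i-t_j)$ --- shows that $E_1$ occupies a proportion $\geq c\epsilon^{C}$ of a curved parallelepiped which, after applying the element of $G_d$ built from a translation, a gliding $G_{t_0}$ and a scaling $S_\lambda$ that normalizes $x^*$ and $I_0$, is exactly a dual paraball $B^*=B^*(\bar x,t_0,\alpha',\beta')$ with $\alpha'$ comparable to $\alpha$ up to a power of $\epsilon$ and the remaining parameter $\beta'$ pinned, again up to a power of $\epsilon$, by the two-sided volume bound for $|E|$; the matching paraball $B=B(\bar x,t_0,\alpha',\beta')$ then captures a comparable proportion of $F_1$, so that $T(E\cap B,F\cap B^*)\geq c\epsilon^{C}\mathcal{T}(E,F)$. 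This is the main obstacle: making the iteration quantitative, and verifying that the set so produced is genuinely comparable to a paraball in \emph{all} of its $d$ transverse scales (rather than merely contained in a dilate of one, as in the covering lemma above), is exactly the content of the analogue of Lemma $3.7$ of \cite{MC5}, and it is here that the dimension-dependent combinatorics of the moment curve must be controlled uniformly in $\epsilon$; one must also split into the cases $\alpha'\leq\beta'$ and $\beta'<\alpha'$, which are handled identically.

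Finally I would normalize the volumes. By $\alpha|E|=\beta|F|=\mathcal{T}(E,F)$ together with Proposition \ref{FQE} applied to $(B,B^*)$, the volumes $|B|$ and $|B^*|$ agree with $|E|$ and $|F|$ up to factors that are powers of $\epsilon$; replacing $(\alpha',\beta')$ by suitable constant multiples shrinks $B$ and $B^*$ so that $|B|\leq|E|$ and $|B^*|\leq|F|$ hold on the nose, and shrinking a paraball by a bounded factor changes $T(E\cap B,F\cap B^*)$ by at most a bounded factor (estimated directly, exactly as in the proof of Proposition \ref{FQE}). Collecting the finitely many losses $c\,\epsilon^{C}$ then yields $T(E\cap B,F\cap B^*)\geq C^{-1}\epsilon^{C}\mathcal{T}(E,F)$ with $|B|\leq|E|$ and $|B^*|\leq|F|$, as required.
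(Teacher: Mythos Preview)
Your first two steps --- the Chebyshev refinement of the pair and the iterative ``anchor and grow'' construction via alternating travel along $h$ --- are essentially what the paper does in Lemmas~\ref{EL1}, \ref{EL2}, and the subsequent unlabeled lemma: one produces $\bar y$, $t_0$, and a set $\Omega\subset\mathbb{R}^{d+1}$ of alternating increments, and the Vandermonde Jacobian shows that the image of $\Omega$ fills a proportion $\geq c\epsilon^{C}$ of $E$ inside a paraball $B=B(\bar x,t_0,C\epsilon^{-C}\alpha,C\epsilon^{-C}\beta)$, and likewise for $F$ inside $B^*$.

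The gap is in your final volume-normalization step. The paraball $B$ so produced satisfies $|B|\leq C\epsilon^{-C}|E|$, not $|B|\leq C|E|$: the parameters are $C\epsilon^{-C}\alpha$ and $C\epsilon^{-C}\beta$, so the discrepancy between $|B|$ and $|E|$ is a genuine power of $\epsilon^{-1}$, as you yourself note. Your proposed fix --- shrinking $(\alpha',\beta')$ by ``suitable constant multiples'' --- therefore requires shrinking by a factor comparable to $\epsilon^{c}$, not a bounded factor. But once you shrink $B$ by $\epsilon^{c}$, you have no control over $|E\cap B|$: the set $E$ occupies only an $\epsilon^{C}$-fraction of the original $B$ and may miss any fixed $\epsilon^{c}$-shrinking entirely. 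The appeal to Proposition~\ref{FQE} does not help here, since that proposition concerns the interaction of a paraball with its own dual, not with an arbitrary set $E$.

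The paper's remedy is Lemma~\ref{partition}: rather than shrinking $B$, one covers $B$ (and simultaneously $B^*$) by a family of $N\lesssim\epsilon^{-C}$ paraballs $B_l$, each satisfying $|B_l|\sim\epsilon^{C}|B|\leq|E|$ and $|B_l^*|\sim\epsilon^{C}|B^*|\leq|F|$. Pigeonholing over the $l$'s then gives one $B_l$ with $\mathcal{T}(E\cap B_l,F\cap B_l^*)\geq N^{-1}\mathcal{T}(E\cap B,F\cap B^*)\geq c\epsilon^{C}\mathcal{T}(E,F)$. This covering-plus-pigeonhole step is the missing ingredient in your argument.
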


\section{Parametrization of subsets of E and F}
The following Lemma is proved in Lemma $3.7$ in \cite{BS3}. 
\begin{lemma}\label{EL1} 
If $d$ is even there exists a point $\bar y$ in $E$, a measurable subset $\Omega\subset \mathbb{R}^{d+1}$ 
such that  

\begin{itemize}
\item $|\Omega|=c \alpha^{\frac{d+2}{2}} \beta^{\frac{d}{2}}$;
\item $\bar y -h(t_1)+h(t_2)-h(t_3)-...+h(t_j)\in E$ for every $t=(t_1,....,t_{d+1})\in\Omega$ and for every even $j$;
\item $\bar x-h(t_1)+h(t_2)-h(t_3)-...-h(t_j)\in F$ for every $t=(t_1,....,t_{d+1})\in\Omega$ and for every odd $j$;
\item $t_1< t_2<....< t_d$ for every $t=(t_1,t_2,...,t_{d+1})\in\Omega$;
\item $t_i-t_{i-1}\geq c\beta$ for every even $i$;
\item $t_i-t_{i-1}\geq c\alpha$ for every odd $i$;
\end{itemize}

and if $d$ is odd, there exists a point $\bar{x}$ in $F$, a measurable subset $\Omega\subset \mathbb{R}^{d+1}$ 
such that  
\begin{itemize}
\item $|\Omega|=c \alpha^{\frac{d+1}{2}} \beta^{\frac{d+1}{2}}$;
\item  $\bar{x}+h(t_{1})-h(t_{2})+h(t_{3})-...+h(t_{j})\in E$ for every $t=(t_{1},..,t_{d+1})\in\Omega$ and for every odd $j$;
\item $\bar{x}+h(t_{1})-h(t_{2})+h(t_{3})-...-h(t_{j})\in F$ for every $t=(t_{1},....,t_{d+1})\in\Omega$ and for every even $j$;
\item  $t_{1}< t_{2}<....< t_{d}$ for every $t=(t_{1}, t_{2},..., t_{d+1})\in\Omega$.
\item $t_i-t_{i-1}\geq c\beta$ for every odd $i$;
\item $t_i-t_{i-1}\geq c\alpha$ for every even $i$.
\end{itemize}
Here $c$ is a small positive constant independent of $E,F,\alpha,\beta$ and depends only on $d$.
\end{lemma}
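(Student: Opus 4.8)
We treat the case $d$ even (the case $d$ odd is identical, with the roles of $E$ and $F$, and all the signs, interchanged). For $\bar y\in E$ and $t=(t_1,\dots,t_{d+1})$ set $P_0(\bar y,t)=\bar y$ and $P_j(\bar y,t)=P_{j-1}(\bar y,t)+(-1)^{j}h(t_j)$ for $1\le j\le d+1$; the first three bullets assert exactly that $P_j\in E$ for $j$ even and $P_j\in F$ for $j$ odd, for every $t\in\Omega$. Note that $\alpha^{(d+2)/2}\beta^{d/2}$ is the volume of a box in $t$-space with $(d+2)/2$ sides of length $\sim\alpha$ and $d/2$ of length $\sim\beta$, which is precisely what the separation requirements $t_i-t_{i-1}\gtrsim\beta$ ($i$ even), $t_i-t_{i-1}\gtrsim\alpha$ ($i$ odd), together with the freedom $\gtrsim\alpha$ in $t_1$, prescribe. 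So the content of the lemma is the existence of a base point around which the ``web'' of valid alternating chains is quantitatively rich, and it suffices to produce $\Omega$ with \emph{at least} the stated measure.

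I would proceed in two preliminary stages. First, a standard uniformization: replace $E,F$ by subsets $\widetilde E\subseteq E$, $\widetilde F\subseteq F$ with $\mathcal{T}(\widetilde E,\widetilde F)\gtrsim\mathcal{T}(E,F)$ and
\[
T^{*}\chi_{\widetilde F}(y)\gtrsim\alpha\quad\text{for all }y\in\widetilde E,\qquad T\chi_{\widetilde E}(x)\gtrsim\beta\quad\text{for all }x\in\widetilde F,
\]
obtained by iterating a bounded number of times the pigeonhole that replaces $E$ by $\{\,y\in E:T^{*}\chi_F(y)\ge\tfrac12\alpha\,\}$ and its dual (cf.\ the refinement arguments in \cite{MC4,MC5,BS1}); this costs only an overall constant. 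Second, lower-bound the unconstrained web integral
\[
W:=\int_{\widetilde E}\int_{\mathbb{R}^{d+1}}\ \prod_{j\ \mathrm{even}}\chi_{\widetilde E}\!\bigl(P_j(\bar y,t)\bigr)\ \prod_{j\ \mathrm{odd}}\chi_{\widetilde F}\!\bigl(P_j(\bar y,t)\bigr)\,dt\,d\bar y
\]
by integrating out $t_{d+1},t_d,\dots,t_2$ one at a time (each $t_k$ occurs, among the surviving factors, only in $P_k$). The $t_{d+1}$-integral equals $T^{*}\chi_{\widetilde F}(P_d)$, which is $\gtrsim\alpha$ because the surviving factor $\chi_{\widetilde E}(P_d)$ forces $P_d\in\widetilde E$; the $t_d$-integral is then $T\chi_{\widetilde E}(P_{d-1})\gtrsim\beta$; continuing inward, each odd-indexed time produces a factor $\gtrsim\alpha$ and each even-indexed one a factor $\gtrsim\beta$, and the leftover $t_1$-integral is $\int_{\widetilde E}T^{*}\chi_{\widetilde F}(\bar y)\,d\bar y\gtrsim\alpha\,|\widetilde E|$. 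Collecting the $d/2$ factors of $\beta$ and the $(d+2)/2$ factors of $\alpha$ gives $W\gtrsim\alpha^{(d+2)/2}\beta^{d/2}\,|\widetilde E|$, and pigeonholing over $\bar y\in\widetilde E$ yields a base point $\bar y$ (which, by the same method run as an upper bound using that $T$ is bounded \cite{BS1}, can simultaneously be taken admissible for the analogous ``one-variable-deleted'' web integrals) for which the inner $t$-integral is $\gtrsim\alpha^{(d+2)/2}\beta^{d/2}$. Since $|\widetilde E|$ cancels here, no quasiextremality hypothesis is needed.

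It then remains to cut the set of valid chains over this $\bar y$ down to the subset $\Omega$ on which $t_1<\dots<t_d$ and all the gap conditions hold, without losing more than a constant factor in measure; \textbf{this is the main obstacle}. The separation conditions are the easier half: if $t_i$ lies within $c$ times the relevant step-length $\ell_i\in\{\alpha,\beta\}$ of $t_{i-1}$, then integrating that one short interval out --- and absorbing, for the factors downstream of index $i$, the resulting $O(c\ell_i)$ perturbation of $P_{i+1},\dots,P_{d+1}$ into a harmless dilation of $\widetilde E,\widetilde F$ --- bounds such configurations by $c$ times a ``skip-one'' web integral, which is $\lesssim\alpha^{(d+2)/2}\beta^{d/2}/\ell_i$ by boundedness of $T$; so these bad configurations occupy measure $\lesssim c\,\alpha^{(d+2)/2}\beta^{d/2}$ and are discarded once $c$ is small (one first normalizes by the scaling symmetry so that all relevant times lie in a fixed bounded window, which is where the $O(c\ell_i)$ estimates are licit). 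Installing the ordering $t_1<\dots<t_d$ requires an additional pigeonholing/normalization of the type carried out in \cite{MC5}, and I expect the careful bookkeeping here --- keeping every exponent of $\alpha$ and $\beta$ exactly right while discarding the unordered and the too-close configurations at once --- to be the most delicate point of the whole proof. What survives is the desired $\Omega$. The odd-$d$ case runs verbatim with a base point $\bar x\in F$ in place of $\bar y\in E$, producing $|\Omega|\sim\alpha^{(d+1)/2}\beta^{(d+1)/2}$.
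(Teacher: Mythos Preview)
Your outline is in the right spirit but has a genuine gap at exactly the place you flag as ``the main obstacle'': you never actually prove that one can pass from the unconstrained web to an \emph{ordered} one without losing the factor $\alpha^{(d+2)/2}\beta^{d/2}$. A pigeonhole over the $d!$ permutations is not enough here, because the web integral with the fixed alternating $E/F$ pattern is \emph{not} symmetric under permutation of the $t_i$'s: different orderings correspond to different patterns of which indices contribute $\alpha$'s and which contribute $\beta$'s, and there is no reason the increasing permutation should carry a positive fraction of the total mass. Your separation argument has a related soft spot: when $|t_i-t_{i-1}|<c\ell_i$, the downstream points $P_{i+1},\dots,P_{d+1}$ depend on $t_i$ through the nonlinear map $h$, so integrating $t_i$ over a short interval does not reduce to a clean ``skip-one'' web times $c\ell_i$; the claimed $O(c\ell_i)$ perturbation of the later $P_j$'s is only valid once you already know all the $t_k$'s live in a window of size $O(\beta)$, which you have not yet established.

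The paper avoids both difficulties by a different mechanism (Christ's tower-of-refinements, following \cite{MC4}). One works on the incidence set $U=\{(x,t)\in F\times\mathbb{R}:x+h(t)\in E\}$ and iteratively deletes from $U$ the points whose fiber integrals are too small --- but crucially one refines in two substeps: first delete points with small \emph{full-line} fiber measure, and then, among the survivors, delete those with small \emph{forward half-line} fiber measure $\int_t^\infty(\cdots)\,ds$. The second substep costs only a factor of $2$ (since on any fiber the forward half-line is at least half the full line for at least half the points), and it is precisely this half-line restriction that forces $t_{k+1}>t_k$ when one finally picks a base point in $U_{d+1}$ and builds the tower $\Omega_1\subset\Omega_2\times\mathbb{R}\subset\cdots$. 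The separation $t_{k+1}-t_k\ge c\ell_{k+1}$ then comes for free: one simply discards the initial segment $[t_k,t_k+c\ell_{k+1})$ of a fiber already known to have measure $\gtrsim\ell_{k+1}$. In short, rather than building the full web and then carving out the monotone piece, one should bake the monotonicity into the refinement step itself; this is the missing idea in your proposal.
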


Now as in \cite{MC4} if we consider the map $(t_1,...,t_d)$ goes to $\bar y -h(t_1)+h(t_2)-h(t_3)-...+(-1)^dh(t_d)\in E$ 
we have 
$$
|E|\geq c \int_{\Omega} \prod_{1\leq i < j\leq d} (t_j-t_i).
$$
Since $(E,F)$ is a $\epsilon$-quasiextremal in addition to Lemma \ref{EL1} we also have for each $t\in\Omega$, 
\begin{itemize}
\item $t_i \leq t_{i-1}+\epsilon^{-C}\alpha$ for every odd $1< i \leq d+1$;
\item $t_i \leq t_{i-1}+\epsilon^{-C}\beta$ for every even $1< i \leq d+1$;
\end{itemize}
where $C$ is an absolute constant depending only on $d$.

\begin{lemma}\label{EL2}
There exists $C<\infty$, depending only on $d$, with the following properties. If $(E,F)$ is an $\epsilon$-quasiextremal with 
$\alpha |E|=\beta |F|=\mathcal{T}(E, F)$ then there exists $t_0\in\mathbb{R}$ and a point $\bar y$ in $E$, 
a measurable subset $\Omega\subset \mathbb{R}^{d+1}$ such that if $d$ is even
\begin{itemize}
\item $|\Omega|=c \alpha^{\frac{d+2}{2}} \beta^{\frac{d}{2}}$;
\item  $\bar y-h(t_{1})+h(t_{2})-h(t_{3})+...+h(t_{j})\in E$ for every $t=(t_{1},....,t_{d+1})\in\Omega$ and for every even $j$;
\item $\bar y-h(t_{1})+h(t_{2})-h(t_{3})-...-h(t_{j})\in F$ for every $t=(t_{1},....,t_{d+1})\in\Omega$ and for every odd $j$;
\item $t_i \leq t_{i-1}+\epsilon^{-\textbf C}\alpha$ for every odd $1< i < d$;
\item $t_i \leq t_{i-1}+\epsilon^{-\textbf C}\beta$ for every even $1< i \leq d$;
\item  $t_{1}< t_2<....< t_d$ for every $t\in\Omega$;
\item $|t_1-t_0|\leq \epsilon^{-\textbf{C}}\alpha$ for all $t\in\Omega$:
\item $t_i-t_{i-1}\geq c\beta$ for every even $i$;
\item $t_i-t_{i-1}\geq c\alpha$ for every odd $i$.
\end{itemize}
and if $d$ is odd, there exists a point $\bar{x}$ in $F$, a measurable 
subset $\Omega\subset \mathbb{R}^{d+1}$ such that  
\begin{itemize}
\item $|\Omega|=c \alpha^{\frac{d+1}{2}} \beta^{\frac{d+1}{2}}$;
\item  $\bar{x}+h(t_{1})-h(t_{2})-h(t_{3})-...+h(t_{j})\in E$ for every $t=(t_{1},..,t_{d+1})\in\Omega$ and for every odd $j$;
\item $\bar{x}+h(t_{1})-h(t_{2})+h(t_{3})-...-h(t_{j})\in F$ for every $t=(t_{1},....,t_{d+1})\in\Omega$ and for every even $j$;
\item  $t_{1}< t_{2}<....< t_{d}$ for every $t=(t_{1}, t_{2},..., t_{d+1})\in\Omega$.
\item $|t_{1}-t_{0}|\leq \epsilon^{-C}\beta$ for all $t\in\Omega$:
\item $t_i-t_{i-1}\geq c\beta$ for every odd $i$;
\item $t_i-t_{i-1}\geq c\alpha$ for every even $i$.
\end{itemize}
\end{lemma}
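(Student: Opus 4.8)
The plan is to obtain Lemma~\ref{EL2} from Lemma~\ref{EL1} by a single normalization (a translation in the $t$-variable) together with the quasiextremality constraint already recorded after the proof of Lemma~\ref{EL1}. All the structural conclusions of Lemma~\ref{EL2} except the new one, namely $|t_1-t_0|\leq\eps^{-C}\alpha$ (resp.\ $\leq\eps^{-C}\beta$), are literally the content of Lemma~\ref{EL1} combined with the displayed upper bounds $t_i\leq t_{i-1}+\eps^{-C}\alpha$ (for odd $i$) and $t_i\leq t_{i-1}+\eps^{-C}\beta$ (for even $i$), so the only real work is to produce a point $t_0$ and shrink $\Omega$ so that all the $t_1$'s in the surviving part of $\Omega$ lie within $\eps^{-C}\alpha$ of $t_0$, while retaining a definite fraction of $|\Omega|$.

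First I would run Lemma~\ref{EL1} to get $\bar y$ (when $d$ is even; $\bar x$ when $d$ is odd), the set $\Omega\subset\mathbb{R}^{d+1}$, and all the listed properties, and then append the quasiextremality upper bounds on the consecutive gaps. Next I observe that these gap bounds give, by telescoping, $t_d - t_1 \leq \eps^{-C}(\alpha+\beta) \lesssim \eps^{-C}\max(\alpha,\beta)$, and more precisely, using the alternating pattern of odd/even gaps, $t_d-t_1 \leq \eps^{-C}\alpha$ in the regime where the normalization is being stated in terms of $\alpha$ (recall $\alpha\leq\beta$ is not assumed globally, so one handles the two regimes as in the paraball definitions). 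Then I would slice $\Omega$ according to the value of $t_1$: write $\Omega = \bigcup_{k\in\mathbb{Z}} \Omega\cap\{k\eps^{-C}\alpha \leq t_1 < (k+1)\eps^{-C}\alpha\}$. Because on $\Omega$ one has $t_1 < t_2 < \cdots < t_d$ with $t_d-t_1\leq \eps^{-C}\alpha$, the remaining coordinates $t_2,\dots,t_{d+1}$ on each slice are confined to an interval of comparable length, so the measure of each slice is at most $C\eps^{-C}\alpha\cdot(\eps^{-C}\alpha)^{d} \lesssim \eps^{-C}|\Omega|\cdot(\text{a bounded number of slices worth})$; more simply, since $|\Omega| = c\alpha^{(d+2)/2}\beta^{d/2}$ and each slice has the $t_1$-extent $\eps^{-C}\alpha$, the number of nonempty slices needed to exhaust a fixed fraction of $|\Omega|$ is $\leq \eps^{-C}$. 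Hence by pigeonholing there is one slice carrying $\geq c\eps^{C}|\Omega|$ of the mass; replace $\Omega$ by that slice, absorb the factor $\eps^{C}$ into the constant $c$ (this is why Lemma~\ref{EL2} states $|\Omega| = c\alpha^{(d+2)/2}\beta^{d/2}$ with a possibly smaller $c$, now allowed to depend on $\eps$ through the earlier convention, or one simply writes $|\Omega|\geq c\eps^{C}\alpha^{(d+2)/2}\beta^{d/2}$ exactly as the downstream argument uses it), and set $t_0$ to be the left endpoint $k\eps^{-C}\alpha$ of the chosen slice, so that $|t_1 - t_0|\leq \eps^{-C}\alpha$ for every $t\in\Omega$. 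The odd-$d$ case is identical with the roles of $\alpha$ and $\beta$ swapped in accordance with the parity of the gaps, giving $|t_1-t_0|\leq\eps^{-C}\beta$.

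The main obstacle, though it is a bookkeeping one rather than a conceptual one, is to verify that the pigeonhole really costs only a polynomial-in-$\eps$ factor and not more: one must be careful that the number of $t_1$-slices that actually meet $\Omega$ is bounded by $\eps^{-C}$ and not by something depending on $|E|$, $|F|$, $\alpha$, or $\beta$ individually. This follows precisely because the telescoped gap bound $t_d-t_1\leq\eps^{-C}\alpha$ is homogeneous of the right degree relative to the slicing width $\eps^{-C}\alpha$, so the slices that matter number $O(\eps^{-C})$ regardless of the scale; combined with the scaling symmetry $S_\lambda$ this is exactly the dilation-invariant statement one wants. Once that is in hand, everything else in Lemma~\ref{EL2} is inherited verbatim from Lemma~\ref{EL1} and the quasiextremality bounds, so the proof closes.

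\begin{proof}
We treat the case $d$ even; the odd case is obtained by interchanging the roles of $\alpha$ and $\beta$ according to the parity of the consecutive gaps. Apply Lemma~\ref{EL1} to obtain $\bar y\in E$ and $\Omega\subset\mathbb{R}^{d+1}$ with all the properties listed there, and recall that since $(E,F)$ is $\epsilon$-quasiextremal we also have, for every $t\in\Omega$,
$$
t_i\leq t_{i-1}+\epsilon^{-C}\alpha\ \ (i\text{ odd}),\qquad t_i\leq t_{i-1}+\epsilon^{-C}\beta\ \ (i\text{ even}).
$$
Telescoping over $2\leq i\leq d$ and using that half the gaps are of the first type and half of the second, we get $t_d-t_1\leq\epsilon^{-C}(\alpha+\beta)$, and in the regime under consideration this is $\leq\epsilon^{-C}\alpha$ after adjusting $C$. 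Now partition $\Omega$ according to the integer $k$ with $k\epsilon^{-C}\alpha\leq t_1<(k+1)\epsilon^{-C}\alpha$. On $\Omega$ one has $t_1<t_2<\cdots<t_d$ with $t_d-t_1\leq\epsilon^{-C}\alpha$, so on each nonempty slice all of $t_1,\dots,t_{d}$ (and $t_{d+1}$, up to the same bound) are confined to an interval of length $\lesssim\epsilon^{-C}\alpha$; consequently only $\lesssim\epsilon^{-C}$ of these slices can together account for the full measure $|\Omega|=c\alpha^{\frac{d+2}{2}}\beta^{\frac{d}{2}}$. By pigeonholing, one slice, say the one with $t_1\in[t_0,t_0+\epsilon^{-C}\alpha)$ where $t_0:=k_0\epsilon^{-C}\alpha$, carries measure $\geq c\epsilon^{C}\alpha^{\frac{d+2}{2}}\beta^{\frac{d}{2}}$. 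Replace $\Omega$ by this slice. Every property in Lemma~\ref{EL1}, being inherited by subsets, continues to hold, the measure bound holds with the smaller constant $c\epsilon^{C}$, and by construction $|t_1-t_0|\leq\epsilon^{-C}\alpha$ for all $t\in\Omega$. This yields all the assertions of the lemma.
\end{proof}
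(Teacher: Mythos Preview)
Your pigeonholing argument has a genuine gap. The telescoped bound $t_d-t_1\leq\epsilon^{-C}(\alpha+\beta)$ holds for each \emph{fixed} $t\in\Omega$; it says nothing about the range of $t_1$ as $t$ varies over $\Omega$. The set $\Omega_1$ of admissible $t_1$'s has Lebesgue measure $c\alpha$, but a set of measure $c\alpha$ on the line can be scattered over arbitrarily many disjoint intervals of width $\epsilon^{-C}\alpha$, so there is no a priori bound of the form ``only $\lesssim\epsilon^{-C}$ slices meet $\Omega$.'' Your sentence ``consequently only $\lesssim\epsilon^{-C}$ of these slices can together account for the full measure'' is exactly the missing step, and it does not follow from anything you have written. (There is also a secondary error: even the per-point telescoped bound is $\epsilon^{-C}(\alpha+\beta)$, not $\epsilon^{-C}\alpha$; at $\theta=0$ one has $\alpha\lesssim\epsilon^{-C}\beta$, the wrong direction to absorb $\beta$ into $\alpha$.)

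The paper avoids this problem by not pigeonholing at all. Instead it runs the iterative construction of Lemma~\ref{EL1} two extra steps, producing $\Omega_{d+3}$, fixes a point $s\in\Omega_{d+3}$, and restarts from the shifted basepoint $\bar y'=\bar y-h(s_1)+h(s_2)$. In the relabelled $\Omega$ the new first coordinate is what used to be the third coordinate in the extended chain, so its separation from $s_2$ is governed by an \emph{odd} gap and hence by the quasiextremality bound $t_1^{\mathrm{new}}-s_2\leq\epsilon^{-C}\alpha$; one then takes $t_0=s_2$. In other words, the control on $t_1$ is manufactured by making it an interior step of a longer chain, where the Jacobian/quasiextremality argument already applies, rather than by trying to localize the original $t_1$ after the fact.
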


\begin{proof}
The proof is quite straightforward. If the $\Omega$ from Lemma \ref{EL1} does not satisfy the 
property that for some $t_0$,  $|t_1-t_0|\leq \epsilon^{-C}\alpha$ for all $t\in\Omega$, 
then in the proof of Lemma \ref{EL1} we iterate the construction of the sets $\Omega_k$ upto (d+3) times. Now we fix a point $s\in\Omega_{d+3}$ and apply Lemma \ref{EL1} with $\bar y$ replaced by $\bar y-h(s_1)+h(s_2)$.
\end{proof}

\begin{lemma}\label{compa}
There exist $c,C<\infty$ with the following properties. Let $(E,F)$ be an $\epsilon$-quasiextremal pair for $T:L^{p_\theta}\rightarrow L^{q_\theta}$ and 
$\alpha|E|=\beta|F|=\mathcal{T}(E,F)$. Then
\begin{itemize}
\item If $\theta=0$ then $\alpha\leq C\epsilon^{-C}\beta$;
\item If $\theta=1$ then $\beta\leq C\epsilon^{-C}\alpha$;
\item If $0<\theta<1$ then $c\epsilon^{\frac{C}{1-\theta}}\alpha\leq\beta\leq C\epsilon^{\frac{-C}{\theta}}\alpha$.
\end{itemize}
\end{lemma}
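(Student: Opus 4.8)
The plan is to trap $\mathcal{T}(E,F)$ between a geometric lower bound coming from the parametrization of $E$ (and $F$) in Lemma~\ref{EL1}/\ref{EL2} and an algebraic upper bound coming directly from the definition of an $\epsilon$-quasiextremal pair, and then to read off all three assertions by comparing exponents. Write $M=\binom{d-1}{2}=\tfrac{(d-1)(d-2)}{2}$; since we are in the regime $d>2$ we have $M\ge 1$, which is exactly what makes the comparison informative (for $d=2$, $M=0$ and nothing comes out, consistent with the symmetry group acting transitively there).

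Lower bound. Using the parametrizing map $(t_1,\dots,t_d)\mapsto \bar y-h(t_1)+h(t_2)-\cdots+h(t_d)\in E$ together with the inequality $|E|\gtrsim \int \prod_{1\le i<j\le d}(t_j-t_i)$ over the $d$-tuples arising from $\Omega$, I would estimate the Jacobian pointwise on $\Omega$: by the lower bounds on the gaps in Lemma~\ref{EL1}, each consecutive gap $t_i-t_{i-1}$ is $\gtrsim\alpha$ or $\gtrsim\beta$ according to the parity of $i$, while each non-consecutive gap $t_j-t_i$ with $j\ge i+2$ is $\gtrsim\alpha+\beta$, since the block $\{i+1,\dots,j\}$ contains indices of both parities and hence contributes one $\alpha$-gap and one $\beta$-gap. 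As there are exactly $M$ non-consecutive pairs, $\prod_{1\le i<j\le d}(t_j-t_i)\gtrsim \alpha^{a}\beta^{b}(\alpha+\beta)^{M}$ for the explicit $a,b$ forced by the parity pattern, and multiplying by the measure of the parameter set (extracted from the value of $|\Omega|$) gives, after routine bookkeeping, $|E|\gtrsim \alpha^{d-1}\beta^{d}(\alpha+\beta)^{M}$. Hence
\[
\mathcal{T}(E,F)=\alpha|E|\gtrsim \alpha^{d}\beta^{d}(\alpha+\beta)^{M}.
\]
(When $d$ is odd one uses the $F$-parametrization instead; the resulting bound is identical. One checks both sides transform the same way under the scaling $S_r$.)

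Upper bound. This uses only the hypothesis $\mathcal{T}(E,F)\ge \epsilon|E|^{1/p_\theta}|F|^{1/q_\theta'}$. Substituting $|E|=\mathcal{T}(E,F)/\alpha$ and $|F|=\mathcal{T}(E,F)/\beta$ and using the identity $\tfrac1{p_\theta}+\tfrac1{q_\theta'}=1+\tfrac{2}{d(d+1)}$, one divides through by $\mathcal{T}(E,F)$ and solves to obtain
\[
\mathcal{T}(E,F)\le \epsilon^{-d(d+1)/2}\,\alpha^{\pi_\theta}\beta^{\rho_\theta},\qquad \pi_\theta=\tfrac{d(d+1)}{2\,p_\theta}=d+\theta M,\quad \rho_\theta=\tfrac{d(d+1)}{2\,q_\theta'}=d+(1-\theta)M.
\]
Comparing the two bounds for $\mathcal{T}(E,F)$ and cancelling $\alpha^{d}\beta^{d}$ gives $(\alpha+\beta)^{M}\le C\epsilon^{-C}\alpha^{\theta M}\beta^{(1-\theta)M}$, and since $M\ge 1$ we take $M$-th roots:
\[
\alpha+\beta\le C\epsilon^{-C}\,\alpha^{\theta}\beta^{1-\theta}.
\]
Bounding $\alpha\le\alpha+\beta$ and solving yields $\alpha\le C\epsilon^{-C/(1-\theta)}\beta$ for $\theta<1$ (which includes $\theta=0$); bounding $\beta\le\alpha+\beta$ yields $\beta\le C\epsilon^{-C/\theta}\alpha$ for $\theta>0$ (which includes $\theta=1$); for $0<\theta<1$ both hold, giving the middle case.

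The substantive point is the lower bound: one must know that the parametrizing map is essentially injective on the relevant parameter set — this is where the strict ordering $t_1<\cdots<t_d$ built into Lemma~\ref{EL1} is used — and one must track the measure of that parameter set precisely so that the exponents in $|E|\gtrsim\alpha^{d-1}\beta^{d}(\alpha+\beta)^{M}$ come out scale-consistent. Granting that, the rest is just exponent arithmetic with $\pi_\theta$ and $\rho_\theta$.
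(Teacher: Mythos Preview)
Your argument is correct and takes a genuinely different route from the paper's. The paper handles the three cases separately: for $\theta=0$ it invokes Lemma~\ref{EL2} (using the quasiextremal hypothesis to produce \emph{upper} bounds on consecutive gaps $t_i-t_{i-1}$) and then reads off $\alpha\lesssim\epsilon^{-C}\beta$ by chasing inequalities along a chain $t_2,t_3,\dots$; for $\theta=1$ it dualizes to $T^{*}$; and for $0<\theta<1$ it shows, by splitting $\langle T\chi_E,\chi_F\rangle=\langle\cdot\rangle^{1-\theta}\langle\cdot\rangle^{\theta}$, that an $\epsilon$-quasiextremal pair at level $\theta$ is automatically an $\epsilon^{C/(1-\theta)}$- (resp.\ $\epsilon^{C/\theta}$-) quasiextremal pair at level $0$ (resp.\ $1$), and then feeds back into the endpoint cases.

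Your approach is more unified: you extract a single geometric lower bound $\mathcal{T}(E,F)\gtrsim\alpha^{d}\beta^{d}(\alpha+\beta)^{M}$ valid for \emph{all} pairs (no quasiextremality needed---this is equivalent to the two endpoint restricted weak type bounds and follows from Lemma~\ref{EL1} alone), and balance it against the algebraic upper bound $\mathcal{T}(E,F)\le\epsilon^{-d(d+1)/2}\alpha^{d+\theta M}\beta^{d+(1-\theta)M}$ coming straight from the definition. The resulting inequality $\alpha+\beta\le C\epsilon^{-C}\alpha^{\theta}\beta^{1-\theta}$ contains all three bullets at once. This makes the $\theta$-dependence of the exponents transparent and avoids Lemma~\ref{EL2} entirely. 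The paper's interpolation trick for the middle range is, on the other hand, a reusable device that does not rely on the specific Jacobian count, so each approach has its virtues. One small remark on your write-up: when you ``extract $|\Omega_d|$ from $|\Omega|$'' you are really using that the construction in Lemma~\ref{EL1} gives, for each admissible $(t_1,\dots,t_{d-1})$, a fiber of the expected size in $t_d$; it is worth saying this explicitly rather than inferring it from $|\Omega|\subset\mathbb{R}^{d+1}$.
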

\begin{proof}
We shall first consider the case when $\theta=0$ and $d$ is even. The proof when $d$ is odd is identical. If $(E,F)$ is an $\epsilon$-quasiextremal for $T:L^{p_0}\rightarrow L^{q_0}$, by Lemma \ref{EL2} one has for all $t=(t_1, t_2,...,t_d)\in\Omega$,
$$
t_2+c\alpha<t_3<t_1+C\epsilon^{-C}\beta<t_2+C\epsilon^{-C}\beta.
$$
This implies $\alpha<C\epsilon^{-C}\beta$.

Next Let us consider the case when $\theta=1$. If $(E,F)$ is an $\epsilon$-quasiextremal for $T:L^{p_1}\rightarrow L^{q_1}$. Then one has
$$
\langle T^{*}(\chi_F), \chi_E\rangle=\langle T(\chi_E), \chi_F\rangle\geq \epsilon |E|^{\frac{1}{p_1}}|F|^{\frac{1}{q'_1}}
$$
and
$$
\beta|F|=\alpha|E|=\langle T^{*}(\chi_F), \chi_E)\rangle.
$$
This implies $(F,E)$ is an $\epsilon$-quasiextremal pair of $T^{*}:L^{q'_1}=L^{p_0}\rightarrow L^{p'_1}=L^{q_0}$. Since $T^{*}$ is the convolution
with the affine arclength measure of $-h(t)$, one has 
$$
\beta\leq C\epsilon^{-C}\alpha.
$$

Let us now fix a $\theta\in (0,1)$. Let $(E,F)$ be such that
$$
\langle T(\chi_E), \chi_F\rangle < \epsilon^\frac{C}{1-\theta} |E|^{\frac{1}{p_0}}|F|^{\frac{1}{q'_0}}.
$$
This implies
\begin{equation}
\begin{split}
\langle T(\chi_E), \chi_F\rangle&=(\langle T(\chi_E), \chi_F\rangle)^{1-\theta} (\langle T(\chi_E), \chi_F\rangle)^\theta\\
&<\epsilon^{C}\big(|E|^{\frac{1}{p_0}}|F|^{\frac{1}{q'_0}}\big)^{1-\theta} (A_{p_1}|E|^{\frac{1}{p_1}}|F|^{\frac{1}{q'_1}}\big)^\theta\\
&<\epsilon^{C} A_{p_1}^\theta |E|^{\frac{1}{p_\theta}} |F|^{\frac{1}{q'_\theta}}.
\end{split}
\end{equation}
Similarly $\langle T(\chi_E), \chi_F\rangle<\epsilon^{C} A_{p_0}^{1-\theta} |E|^{\frac{1}{p_\theta}} |F|^{\frac{1}{q'_\theta}}$ if
$\langle T(\chi_E), \chi_F\rangle < \epsilon^\frac{C}{\theta} |E|^{\frac{1}{p_1}}|F|^{\frac{1}{q'_1}}$. This implies if $(E,F)$ is an
$\epsilon$-quasiextremal pair of $T:L^{p_\theta}\rightarrow L^{q_\theta}$ then $(E,F)$ is an $\epsilon^\frac{C}{1-\theta}$-quasiextremal pair of
$T:L^{p_0}\rightarrow L^{q_0}$ and $\epsilon^{\frac{C}{\theta}}$-quasiextremal pair of $T:L^{p_1}\rightarrow L^{q_1}$. Thus by the results for $\theta=0$ and
$\theta=1$, we have
$$
c\epsilon^{\frac{C}{1-\theta}}\alpha\leq\beta\leq C\epsilon^{\frac{-C}{\theta}}\alpha.
$$
\end{proof}

Let us consider the paraball $B=B(\bar{x}=\bar{y}-h(t_0), t_{0}, C \epsilon^{-C}\alpha, C\epsilon^{-C}\beta)$.
\begin{lemma} 
If $B$ is as above then if $d$ is even,
$$
E\cap B\supset \bar{y}-h(t_{1})+h(t_{2})-h(t_{3})-...+h(t_{d}),
$$
$$
F\cap B^{*}\supset\bar{y}-h(t_{1})+h(t_{2})+h(t_{3})-...-h(t_{d+1})
$$
for every $t\in\Omega$, and when $d$ is odd 
$$
E\cap B\supset \bar{y}- h(t_{1})+h(t_{2})-h(t_{3})-...+h(t_{d+1}),
$$
$$
F\cap B^{*}\supset\bar{y}-h(t_{1})+h(t_{2})-h(t_{3})-...-h(t_{d})
$$
for every $t\in\Omega$.
\end{lemma}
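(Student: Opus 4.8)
The plan is the following. The two containments asserting that the displayed points lie in $E$, resp.\ in $F$, need nothing new: they are precisely two of the membership properties of $\Omega$ recorded in Lemma \ref{EL2} (for $d$ even, take the index $j=d$, which is even, to land in $E$, and $j=d+1$, which is odd, to land in $F$; for $d$ odd, symmetrically). So all the content is in showing that these same points lie in the paraball $B=B(\bar x,t_0,C\epsilon^{-C}\alpha,C\epsilon^{-C}\beta)$, resp.\ the dual paraball $B^{*}$. I will treat the case $d$ even and $\alpha\le\beta$; the case $\beta<\alpha$ uses the other explicit description of a paraball, and the case $d$ odd is the same computation with $E$ and $F$ (equivalently $T$ and $T^{*}$) interchanged, a move that also swaps $\alpha$ with $\beta$. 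By Lemma \ref{compa}, in every situation of interest $\alpha$ and $\beta$ are comparable up to a power of $\epsilon$, so the case distinctions are harmless.

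\emph{Reduction to the origin.} In the explicit description of $B$ (for $\alpha\le\beta$) the $m$-th defining functional is $v\mapsto\sum_{i=1}^{m}\binom{m}{i}(-t_0)^{m-i}v_i$, which is exactly the $m$-th coordinate of $G_{-t_0}v$. Hence, writing $w:=G_{-t_0}(y-\bar y)$, the point $y$ lies in $B$ iff $|w_1|\le C\epsilon^{-C}\beta$ and $|w_m-w_1^{m}|\le(C\epsilon^{-C}\beta)^{m-1}(C\epsilon^{-C}\alpha)$ for $1<m\le d$; and, writing $u:=G_{-t_0}(x-\bar x)$, the point $x$ lies in $B^{*}$ iff $|u_1|\le C\epsilon^{-C}\alpha$ and $|u_m|\le(C\epsilon^{-C}\beta)^{m-1}(C\epsilon^{-C}\alpha)$. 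Now apply $G_{-t_0}$ to our points. Using the fundamental identity $h(s+t_0)=G_{t_0}h(s)+h(t_0)$ — equivalently $G_{-t_0}h(t_i)=h(t_i-t_0)-h(-t_0)$ and $G_{-t_0}h(t_0)=-h(-t_0)$ — together with $\bar y-\bar x=h(t_0)$, a short computation shows the $h(-t_0)$ terms cancel: for $w$ because $\sum_{i=1}^{d}(-1)^{i}=0$, and for $u$ because the single surviving $-h(-t_0)$ is absorbed by the contribution of $h(t_0)$ coming from $\bar y-\bar x$. Thus, with $\tau_i:=t_i-t_0$,
\[
w_m=\sum_{i=1}^{d}(-1)^{i}\tau_i^{m},\qquad u_m=\sum_{i=1}^{d+1}(-1)^{i}\tau_i^{m},\qquad 1\le m\le d .
\]

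\emph{The estimates.} From Lemma \ref{EL2}, $|\tau_1|\le\epsilon^{-C}\alpha$, and consecutive $\tau_i$ are separated, alternately, by an ``$\alpha$-gap'' (of length $\le\epsilon^{-C}\alpha$) and a ``$\beta$-gap'' (of length $\le\epsilon^{-C}\beta$); summing at most $d$ gaps gives the a priori bound $|\tau_i|\le C\epsilon^{-C}\beta$ for all $i$. The mechanism is to group the indices separated by an $\alpha$-gap, so that each such pair enters $w$ (resp.\ $u$) as a difference $\pm\bigl(h(\tau_i)-h(\tau_{i+1})\bigr)$; via $a^{m}-b^{m}=(a-b)\sum_{l=0}^{m-1}a^{l}b^{m-1-l}$ and the a priori bound, each pair contributes $\mathcal O(\epsilon^{-C}\alpha\beta^{m-1})$ to the $m$-th coordinate and $\mathcal O(\epsilon^{-C}\alpha)$ to the first. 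For $u$ ($d+1$ indices) the $\alpha$-gap pairs use up the indices $2,\dots,d+1$, leaving only $\tau_1$, which is itself $\mathcal O(\epsilon^{-C}\alpha)$; summing gives $|u_1|\le C\epsilon^{-C}\alpha$ and $|u_m|\le C\epsilon^{-C}\alpha\beta^{m-1}$, exactly what $B^{*}$ requires once the implicit constant is absorbed into the $C\epsilon^{-C}$ in its definition, so $x\in B^{*}$. For $w$ ($d$ indices) two indices, $\tau_1$ (small) and $\tau_d$, remain unpaired; pairing the rest by $\beta$-gaps for the first coordinate gives $|w_1|\le C\epsilon^{-C}\beta$ directly.

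\emph{The main obstacle.} The only genuinely delicate step is the bound $|w_m-w_1^{m}|\le C\epsilon^{-C}\alpha\beta^{m-1}$ for the $E$-side, since $w_m$ on its own contains the term $\tau_d^{m}-\tau_1^{m}$ of size $\sim_\epsilon\beta^{m}$ — one must show the transformed point lies genuinely near the glided, scaled moment curve, not merely inside a coordinate box. I would write $w_1=(\tau_d-\tau_1)+\rho$, where $\rho$ is the sum of the $\alpha$-gap differences and so $|\rho|\le C\epsilon^{-C}\alpha$; then $w_1^{m}=(\tau_d-\tau_1)^{m}+\mathcal O(\epsilon^{-C}\alpha\beta^{m-1})$, and expanding $\tau_d^{m}-\tau_1^{m}-(\tau_d-\tau_1)^{m}$ by the binomial theorem, every resulting monomial carries at least one factor $\tau_1$ (with $|\tau_1|\le\epsilon^{-C}\alpha$) times a power of $\tau_d$ of degree $\le m-1$, hence is $\mathcal O(\epsilon^{-C}\alpha\beta^{m-1})$ (using $\alpha\le\beta$). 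Adding the pieces yields $w_m-w_1^{m}=\mathcal O(\epsilon^{-C}\alpha\beta^{m-1})$ and therefore $y\in B$. Everything else is bookkeeping with the gap sizes furnished by Lemma \ref{EL2}.
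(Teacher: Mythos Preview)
Your proof is correct and follows essentially the same approach as the paper's. Both reduce to the normalized case $\bar x=\bar y=0$, $t_0=0$ (the paper by invoking ``a suitable symmetry'', you by writing out $G_{-t_0}$ explicitly and setting $\tau_i=t_i-t_0$), then bound the coordinates by grouping the alternating sum into consecutive pairs separated by $\alpha$-gaps; your handling of the key quantity $w_m-w_1^{m}$, via $w_1=(\tau_d-\tau_1)+\rho$ and then the binomial expansion of $\tau_d^{m}-\tau_1^{m}-(\tau_d-\tau_1)^{m}$, is just a cleaner reorganization of the paper's multinomial expansion of the same expression. One minor wording slip: when you say ``pairing the rest by $\beta$-gaps for the first coordinate'' after declaring $\tau_1,\tau_d$ unpaired, the pairing you actually want there is still the $\alpha$-gap pairing $(\tau_2,\tau_3),\dots,(\tau_{d-2},\tau_{d-1})$, with the bound $|w_1|\le C\epsilon^{-C}\beta$ coming from the single unpaired $\tau_d$; the conclusion is unaffected.
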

\begin{proof}
We will give the details when $d$ is even, $\epsilon=1$ and $\theta=0$; the proof for other cases are essentially the same.
By Lemma \ref{EL2} it is enough to prove that $\bar{y}-h(t_{1})+h(t_{2})+h(t_{3})-...+h(t_{d})\in B$ 
and $\bar{y}-h(t_{1})+h(t_{2})-h(t_{3})-...+h(t_d)-h(t_{d+1})\in B^{*}$ for every $t\in\Omega$. 

Let us first prove that $\bar{y}-h(t_{1})+h(t_{2})+h(t_{3})-...+h(t_{d})\in B$.
We can assume, by applying suitable symmetry, if necessary, that $\bar{x}=\bar{y}=0$ with $t_{0}=0$.
If $y=(y_1,y_2,...,y_d)=-h(t_1)+h(t_2)-h(t_3)-...+h(t_d)$ then Lemma \ref{EL2} implies
$$
|y_{1}|=|t_{1}-t_{2}+...-t_{d}|\leq C\beta,
$$
and
\begin{equation}
\begin{split}
y_{m}-y_{1}^{m}&=\big(-t_{1}^{m}+t_{2}^{m}-...+t_{d}^{m}\big)-(-t_{1}+t_{2}-...+t_{d})^{m}\\
&=-t_{1}^m+(t_{2}^m-t_{3}^m)+...+(t_{d-2}^m-t_{d-1}^m)+t_{d}^m-\big[-t_{1}+(t_{2}-t_3)+...+(t_{d-2}-t_{d-1})+t_{d}\big]^{m}\\
&=-t_{1}^{m}-(-t_1)^m+[t_{2}^m-t_{3}^m-(t_2-t_3)^m]+...+[t_{d-2}^{m}-t_{d-1}^{m}-(t_{d-2}-t_{d-1})^{m}]\\
&-\sum_{r_{1}+...+r_{\frac{d}{2}}=m, r_{i}\neq m} (-t_1)^{r_1}(t_{2}-t_{3})^{r_{2}}...(t_{d-2}-t_{d-1})^{r_{\frac{d}{2}-1}}t_{d}^{\frac{d}{2}}\\
&\leq C\alpha^m+C\beta^{m-1}|t_2-t_3|+C\beta^{m-1}|t_4-t_5|+...+C\beta^{m-1}|t_{d-2}-t_{d-1}|.
\end{split}
\end{equation}
By Lemma \ref{EL2} we have $|t_i-t_{i+1}|<C\alpha$\,\,\text{for all even}\,\,$2\leq i\leq d$.
By Lemma \ref{compa} we have $\alpha\leq C\beta$. Therefore we have $|y_m-y_{1}^m|<C\alpha\beta^{m-1}$. This proves our claim.

Now we shall prove that $\bar{y}-h(t_{1})+h(t_{2})-h(t_{3})-...+h(t_d)-h(t_{d+1})\in B^{*}$ for every $t\in\Omega$.
WLOG $\bar{x}=\bar{y}=0$ with $t_{0}=0$. Let $x=(x_1,x_2,...,x_d)=-h(t_{1})+h(t_{2})-h(t_{3})-...+h(t_d)-h(t_{d+1})$.
Then $x\in B^{*}$ if and only if $|x_1|<C\alpha$ and $|x_m|<C\alpha\beta^{m-1}$ for all $1<m\leq d$. By Lemma \ref{EL2}
$$
|x_{1}|=|-t_{1}+(t_{2}-t_3)+...+(t_{d}-t_{d+1})|\leq C\alpha
$$
and 
\begin{equation}
\begin{split}
|x_{m}|&=|-t_{1}^{m}+t_{2}^{m}-...+t_{d}^{m}-t_{d+1}^m|\\
&=|-t_{1}^m+(t_{2}^m-t_{3}^m)+...+(t_{d}^m-t_{d+1}^m)|\\
&\leq |-t_{1}^{m}|+C\beta^{m-1}|t_{2}-t_{3}|+...+C\beta^{m-1}|t_{d}-t_{d+1}|\\
&\leq C\beta^{m-1}\alpha.
\end{split}
\end{equation}

\end{proof}

Therefore we have $|E\cap B(\bar x, t_0,C\epsilon^{-C}\alpha,C\epsilon^{-C}\beta)|\geq |\{\bar y-h(t_1)+h(t_2)+h(t_3)-...+h(t_d):t\in\Omega\}|\geq c\epsilon^C|E|$. Similarly $|F\cap B^{*}(\bar x, t_0,C\epsilon^{-C}\alpha,C\epsilon^{-C}\beta)|\geq |\{\bar y-h(t_1)+h(t_2)-h(t_3)-...-h(t_{d+1}):t\in\Omega\}|\geq c\epsilon^C|F|$. This implies
 \begin{itemize}
 \item $\mathcal{T}(E\cap B, F\cap B^*)\geq c\epsilon^C\mathcal{T}(E,F)$
 \item$ |B|\leq C {\epsilon}^{-C} |E|$ 
 \item $ |B^*|\leq C {\epsilon}^{-C} |F| $
 \end{itemize}

This is stronger than the conclusion of Theorem \ref{ballandquasi} which will be proved in the following lemma.
\begin{lemma}\label{partition}
There exist absolute constants $N, C<\infty$ with the following property. For each paraball 
$B$ and given any $0<\delta\leq 1$, there exists a family of paraballs $\{B_l:l\in L\}$ with the
following properties,
\begin{itemize}
\item $B\subset\cup_{l\in L}B_l$;
\item $B^{*}\subset\cup_{l\in L}B_l^{*}$;
\item $|L|\leq N \delta^{-C}$;
\item $|B_l|\sim\delta |B|$ for all $l$;
\item $|B^{*}_l|\sim\delta |B^{*}|$ for all $l$.
\end{itemize}
\end{lemma}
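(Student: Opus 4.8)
The plan is to construct $\{B_l\}$ explicitly, as a grid of glidings and translations of a single rescaled copy of $B$, after first normalizing $B$ by a symmetry. Each $\phi\in G_d$ carries paraballs to paraballs and dual paraballs to dual paraballs, has constant Jacobian, and hence preserves all five conclusions of the lemma — the inclusions $B\subset\bigcup_l B_l$ and $B^*\subset\bigcup_l B_l^*$, the ratios $|B_l|/|B|$ and $|B_l^*|/|B^*|$, and the cardinality $|L|$ — so we may replace $B$ by $\phi^*(B)$ for any convenient $\phi$. Composing a scaling, a gliding and a translation reduces us to $B=B(0,0,\alpha,1)$ with $0<\alpha\le1$; the opposite case, where $B$ is equivalent to $B(0,0,1,\beta)$, is entirely analogous, so we treat only this one. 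Under this normalization $B=\{y:|y_1|<1\text{ and }|y_m-y_1^m|\le\alpha\ (1<m\le d)\}$ and $B^*=\{x:|x_m|<\alpha\ (1\le m\le d)\}$, and we may also assume $\delta$ is at most a dimensional constant, the complementary range being trivial.

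Set $\lambda=\delta^{2/(d(d+1))}\le1$. Because $S_\lambda$ has Jacobian $\lambda^{d(d+1)/2}$ while translations and glidings have Jacobian $1$, every paraball $B(\bar x,t_0,\lambda\alpha,\lambda)$ has $|B(\bar x,t_0,\lambda\alpha,\lambda)|=\lambda^{d(d+1)/2}|B|=\delta|B|$, and its dual has measure $\delta|B^*|$; so the last two conclusions will come for free. Let $\{t^{(l)}\}$ be a maximal $\lambda$-separated subset of $(-1,1)$ containing $0$ (so $\lesssim\lambda^{-1}$ points, each point of $(-1,1)$ within $\lambda$ of one of them), let $\bar x^{(l)}$ range over $(\lambda^d\alpha\,\mathbb{Z})^d\cap[-2\alpha,2\alpha]^d$ (so $\lesssim\lambda^{-d^2}$ points), and for each admissible pair set $B_l=B(\bar x^{(l)},t^{(l)},C_0\lambda\alpha,C_0\lambda)$, a $C_0$-dilate in the sense of (\ref{scale}), where $C_0\ge1$ is a dimensional constant fixed below. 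Then $|B_l|=C_0^{d(d+1)/2}\delta|B|\sim\delta|B|$, $|B_l^*|\sim\delta|B^*|$, and $|L|\lesssim\lambda^{-d^2-1}=\delta^{-C}$ with $C=2(d^2+1)/(d(d+1))$.

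It remains to check the two inclusions. For $B^*$ only the indices with $t^{(l)}=0$ are used: there $B_l^*=\prod_{m=1}^d\{|x_m-\bar x^{(l)}_m|\le C_0^m\lambda^m\alpha\}$, and as $\bar x^{(l)}$ runs over the (far finer) grid $(\lambda^d\alpha\,\mathbb{Z})^d\cap[-2\alpha,2\alpha]^d$ these boxes cover $[-\alpha,\alpha]^d\supset B^*$. For $B$, fix $y\in B$ and write $y_m=y_1^m+e_m$, so $e_1=0$ and $|e_m|\le\alpha$; choose $t^{(l)}$ with $|y_1-t^{(l)}|\le\lambda$ and $\bar x^{(l)}$ with $|\bar x^{(l)}_m-e_m|\le\lambda^d\alpha$ for every $m$, both possible by construction. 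Writing $\bar y^{(l)}=\bar x^{(l)}+h(t^{(l)})$ and using the identity $\sum_{i=1}^m\binom{m}{i}(-t)^{m-i}(s^i-t^i)=(s-t)^m$, one finds
\begin{multline*}
\sum_{i=1}^m\binom{m}{i}(-t^{(l)})^{m-i}\big(y_i-\bar y^{(l)}_i\big)-\big(y_1-\bar y^{(l)}_1\big)^m\\
=\big(y_1-t^{(l)}\big)^m-\big(y_1-t^{(l)}-\bar x^{(l)}_1\big)^m+\sum_{i=1}^m\binom{m}{i}(-t^{(l)})^{m-i}\big(e_i-\bar x^{(l)}_i\big)
\end{multline*}
for $1<m\le d$; since $|y_1-t^{(l)}|\le\lambda$, $|t^{(l)}|\le1$ and $|e_i-\bar x^{(l)}_i|\le\lambda^d\alpha$, the first difference is $O(\lambda^{m-1+d}\alpha)$ and the last sum is $O(\lambda^d\alpha)$, so the whole expression is $O(\lambda^m\alpha)$, and likewise $|y_1-\bar y^{(l)}_1|=O(\lambda)$. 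Taking $C_0$ larger than these implied constants forces $y\in B_l$, which finishes the covering of $B$.

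The crux, as the last computation indicates, is the compatibility of the two covering requirements: the grid of dual centers $\{\bar x^{(l)}\}$ must be fine enough in each coordinate that the affine polynomials cutting out $B_l$ stay $O(\lambda^m\alpha)$ along points of $B$ — and the worst index $m=d$ dictates the uniform spacing $\lambda^d\alpha$ — yet the family must stay small enough for $|L|\le N\delta^{-C}$; the argument above shows $\lambda^d\alpha$ achieves both. The remaining points (the normalization $B(0,0,1,\beta)$, and the check that the glidings $G_{t^{(l)}}$ with $t^{(l)}\neq0$, which shear the dual boxes, cause no harm) are routine because $|t^{(l)}|\le1$ makes each such gliding uniformly bi-Lipschitz with dimensional constants.
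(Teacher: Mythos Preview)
Your proof is correct and follows essentially the same approach as the paper: normalize to $B=B(0,0,\alpha,1)$, set the scale parameter $\lambda=\delta^{2/(d(d+1))}$, choose a $\lambda$-net of gliding parameters in $[-1,1]$ together with a $\lambda^d\alpha$-net of dual centers in $B^*$, and take $B_l=B(\bar x^{(l)},t^{(l)},C\lambda\alpha,C\lambda)$. Your verification of the $B$-covering via the binomial identity is in fact more detailed than the paper's own argument.
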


\begin{proof}
The proof of this lemma will be similar to the proof of Lemma $7.2$ in \cite{MC3}. WLOG we can assume that $B=B(0,0,\alpha,1)$ or $B(0,0,1,\beta)$. Let $B=B(0,0,\alpha,1)$, the proof for $B(0,0,1,\beta)$ follows from a similar argument. Then $|B|\sim\alpha^{d-1}$ and $|B^*|\sim\alpha^{d}$. Let $\eta=\delta^{\frac{2}{d(d+1)}}$. Let us select a maximal $\eta^d\alpha$-separated subset of $B^{*}(0,0,\alpha,1)$ with respect to the regular Euclidean distance. Let us denote this set by $\{z^l:l\in L\}$. Then $|L|\leq C\eta^{-C}$.
Now we choose a maximal $\eta$-seperated of $[-1,1]$. Let us denote this set by $\{t_k:k\in K\}$. Then $|K|\leq C\eta^{-C}$.

Now we define $B_{l,k}=B(z^l,t_k,C\eta\alpha,C\eta)$. Then $|B_{l,k}|\sim\eta^{\frac{d(d+1)}{2}}\alpha^{d-1}\sim\delta |B|$ and $|B^{*}_{l,k}|\sim\eta^{\frac{d(d+1)}{2}}\alpha^d\sim\delta|B^*|$. By the definition of $B_{l,k}$ it directly follows that
$$
B^{*}\subset\cup_{l,k}B_{l,k}^{*}.
$$

We shall now prove that $B\subset\cup_{l,k}B_{l,k}$. It is enough to prove that
\begin{equation}\label{cover}
B(0,0,\eta^d \alpha,1)\subset\cup_{k} B(0,t_k,C\eta\alpha, C\alpha)
\end{equation}
since this implies
$$
B(0,0,1,1)=\cup_{l}B(z^l,0,\eta^d\alpha,1)\subset\cup_{l}\cup_{k} B(z^l,t_k,C\eta\alpha, C\alpha).
$$

To prove \ref{cover}, let $y\in B(0,0,\eta^d\alpha,1)$. Now choose $t_k$ such that $t_k\leq y_1<t_{k+1}$. We claim that $y\in B(0,t_k,C\eta\alpha, C\eta)$. Our claim is true if and only if for each $2\leq m\leq d$
$$
\bigg|\sum_{i=1}^m  \binom{m}{i}(-t_k)^{m-i}(y_i-t_k^i)-(y_1-t_k)^m\bigg|\leq C^m\eta^m\alpha.
$$

Now
$$
\bigg|\sum_{i=1}^m \binom{m}{i} (-t_k)^{m-i}(y_i-t_k^i)-(y_1-t_k)^m\bigg|=\bigg|\sum_{i=1}^m \binom{m}{i} (-t_k)^{m-i}(y_i-y_1^i)\bigg|\leq C\eta^d\alpha\leq C\eta^m\alpha
$$
as $|t_k|\leq 1$ and $y\in B(0,0,\eta^d\alpha,1)$.

\end{proof}

To complete the proof of the Theorem \ref{ballandquasi} we choose $C$ sufficiently large such that with $\delta=\epsilon^C$
we apply Lemma \ref{partition} to obtain paraballs $\{B_l\}_{l\in L}$ such that for each $l\in L$, $|B_l|\leq|E|$
and $|B^{*}_l|\leq|F|$. We now have 

\begin{equation}
T(E\cap B, F\cap B^{*})\\
\leq \sum_{l\in L}T(E\cap B_l, F\cap B^{*}_l).
\end{equation}

In addition we have $|L|\leq C\epsilon^{-C}$. Therefore there exists a paraball $B_l$ such that
$$
T(E\cap B_l, F\cap B^{*}_l)\geq C^{-1}\epsilon^{C} T(E\cap B, F\cap B^{*}).
\geq c \epsilon^{C} \mathbb{T}(E, F)
$$
and 
$$
|B_l|\leq |E|\,\,\,\,\,\,\,\,\,\,|B^{*}_l|\leq |F|.
$$

\section{Lorentz spaces and $\epsilon$-quasiextremal function}
\begin{definition}
Let $f$ be a nonnegative function which is finite almost everywhere. By a rough level set 
decomposition of $f$ we mean a representation of $f$ as 
$f =\sum_{j=-\infty}^{\infty}2^{j} f_j$ where $\chi_{E_j}\leq f_j\leq 2\chi_{E_j}$ with the sets 
$E_j$ pairwise disjoint and measurable.

We may approximate the Lorentz norms of $f$ by,
$$
\|f\|_{p,r}\sim
\left\{
	\begin{array}{ll}
		\bigg(\sum_{j}(2^{j} |E_{j}|^{\frac{1}{p}})^{r}\bigg)^{\frac{1}{r}},  & \mbox{if}\quad r<\infty \\
		\sup_{j}2^{j} |E_{j}|^{\frac{1}{p}}, & \mbox{if} \quad r=\infty
	\end{array}
\right.
$$
where $f =\sum_{j=-\infty}^{\infty}2^{j} f_j, f_j \sim \chi_{E_j}$ is a rough level set decomposition of $f$.
In particular, $L^{p, r}\big(\mathbb{R}^{d}\big)=\{f: \|f\|_{p,r}\,<\,\infty\}$.
\end{definition}

The following lemma is Theorem $4.1$ in \cite{BS1}.
\begin{lemma}\label{lorentzbound}
T maps $L^{p, r}$ boundedly to $L^{q}$ for every $r\in (p, q)$ for every $(p,q)$ as in \ref{pvalue}.
\end{lemma}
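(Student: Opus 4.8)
The statement to prove is Lemma~\ref{lorentzbound}: that $T$ maps $L^{p,r}$ boundedly into $L^q$ for every $r \in (p,q)$, where $(p,q)$ ranges over the scaling line in \eqref{pvalue}. This is cited as Theorem~4.1 of \cite{BS1}, so the plan is to reconstruct the standard real-interpolation / restricted-weak-type argument that underlies such a statement.

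The plan is as follows. First I would reduce to the restricted weak type endpoint bounds: by Theorem~(Christ, Littman, Oberlin, Stovall) together with Stovall's strong-type endpoint result \cite{BS1}, $T$ is of strong type $(p_0,q_0)$ and $(p_1,q_1)$, hence certainly of restricted weak type at these endpoints, and the pair $(p,q)$ with $p < q$ lies strictly between them on the scaling segment. Writing $\theta \in (0,1)$ for the interpolation parameter with $(1/p,1/q) = (1/p_\theta, 1/q_\theta)$, the classical Marcinkiewicz/real-interpolation machinery gives $T : L^{p_\theta, s} \to L^{q_\theta, s}$ boundedly for every $s$, in particular for $s = q_\theta$; but we want the stronger off-diagonal conclusion $L^{p,r} \to L^q = L^{q,q}$ for the larger range $r \in (p,q)$, so a bare application of the interpolation theorem is not quite enough and one needs to exploit the gain coming from $q > p$.

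The key steps, in order, would be: (1) perform a rough level-set decomposition $f = \sum_j 2^j f_j$ with $f_j \sim \chi_{E_j}$, as in the definition preceding the lemma, so that $\|f\|_{p,r} \sim \big(\sum_j (2^j |E_j|^{1/p})^r\big)^{1/r}$; (2) estimate $\|Tf\|_q \le \big\| \sum_j 2^j T\chi_{E_j} \big\|_q$ and, for each pair of indices, use the bilinear form $\mathcal{T}(E_j, F) = \langle T\chi_{E_j}, \chi_F\rangle$ tested against the level sets $F_k$ of a dual function $g \in L^{q'}$; (3) apply the two restricted-weak-type endpoint inequalities to the quantity $\mathcal{T}(E_j, F_k)$, obtaining a bound of the form $\mathcal{T}(E_j,F_k) \lesssim \min\big(|E_j|^{1/p_0}|F_k|^{1/q_0'},\, |E_j|^{1/p_1}|F_k|^{1/q_1'}\big)$, and interpolate these two bounds with a parameter depending on the ratio $|E_j|/|F_k|$ to produce a gain of the form $\min(2^{j-k}, 2^{k-j})^{\sigma}$ for some $\sigma > 0$ after summing; (4) sum the resulting geometric-in-$|j-k|$ series using Schur's test / Young's inequality for the discrete convolution, which converts the $\ell^r$ control of $\{2^j |E_j|^{1/p}\}$ into $\ell^q$ (hence $L^q$) control of $Tf$ precisely because $r < q$ leaves room for the convolution with an $\ell^1$ kernel.

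The main obstacle I expect is step (3)--(4): extracting a genuine quantitative gain $\min(2^{j-k},2^{k-j})^\sigma$ from the two endpoint restricted-weak-type inequalities, and then checking that the exponent bookkeeping closes so that the discrete Schur/Young estimate lands in $\ell^q$ rather than only in $\ell^r$ or $\ell^\infty$. This is exactly the point where the restriction $r \in (p,q)$ (as opposed to $r = p$, which would be the full strong-type $L^p \to L^q$ statement that is \emph{not} claimed here) is used: the extra integrability margin $q - r > 0$ is what absorbs the number of ``diagonal'' $(j,k)$ interactions. A cleaner alternative, which I would fall back on if the direct summation becomes unwieldy, is to invoke abstract real interpolation between the two endpoint restricted-weak-type (hence weak-type, after the strong-type upgrade of \cite{BS1}) estimates: since $L^{p_0,1} \to L^{q_0}$ and $L^{p_1,1} \to L^{q_1}$ hold, the off-diagonal Marcinkiewicz interpolation theorem yields $L^{p_\theta, r} \to L^{q_\theta}$ for all $r$ with $p_\theta \le r$ satisfying the natural constraint, and $r \in (p,q)$ sits comfortably in that range; one then only has to verify that $r < q$ is the precise condition making the interpolation constant finite. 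Either way the conclusion is Lemma~\ref{lorentzbound}, and in the sequel it will be used to upgrade weak convergence of an extremizing sequence to the strong convergence needed for Theorem~\ref{MT}.
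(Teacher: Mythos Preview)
The paper gives no proof of this lemma at all; it simply records that the statement is Theorem~4.1 of \cite{BS1}. Your reconstruction is correct, and your primary route (steps (1)--(4): level-set decomposition of $f$ and of a dual $g$, bilinear control of $\mathcal{T}(E_j,F_k)$ from the two endpoint inequalities, and summation via a geometric gain in $|j-k|$) is exactly the argument carried out in \cite{BS1}; indeed the very next lemma in the paper, Lemma~\ref{uniformsmall}, is extracted from that same proof and encodes precisely the bilinear summation you describe.

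One small comment on your fallback. The abstract off-diagonal real-interpolation argument you sketch does give $T:L^{p_\theta,s}\to L^{q_\theta,s}$ for every $s$, and taking $s=q_\theta$ together with the embedding $L^{p,r}\subset L^{p,q}$ for $r<q$ immediately yields the lemma at \emph{interior} points $\theta\in(0,1)$. However, the statement as written covers all $(p,q)$ on the segment, including the endpoints $(p_0,q_0)$ and $(p_1,q_1)$, and there the interpolation shortcut is unavailable; the direct bilinear/level-set argument of your steps (1)--(4) is what actually proves the endpoint Lorentz improvement in \cite{BS1}. So your main plan is the right one and the fallback is strictly weaker in scope.
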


The following lemma is also proved in the proof of Theorem $4.1$ in \cite{BS1}.
\begin{lemma}\label{uniformsmall}
There exist $C, c> 0$ with the following property. Let $\epsilon> 0$. 
Let $f=\sum_{j} 2^{j}f_j, f_j\sim\chi_{E_{j}}$ and  $g=\sum_{k} 2^{k}g_k, g_k\sim\chi_{F_{k}}$ 
be such that either $\mathcal{T}(E_{j}, F_{k})\leq \epsilon |E_{j}|^{\frac{1}{p}} |F_{k}|^{1-\frac{1}{q}}$ or
$2^{j}|E_{j}|^{\frac{1}{p}}\leq \epsilon \|f\|_{p}$ for each $j$ and $k$. Then 
$\mathcal{T}(f, g)\leq C\, \epsilon^{c}\|f\|_{p}\,\|g\|_{q^{'}}$.
\end{lemma}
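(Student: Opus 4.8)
\emph{The plan} is to split $f$ according to the size of its level‑set coefficients, dispose of the ``small'' part using the Lorentz refinement of Lemma~\ref{lorentzbound}, and treat the ``large'' part by a dyadic block decomposition in which the single gain furnished by the hypothesis is played off against off‑diagonal decay coming from the Lorentz refinements of both $T$ and $T^{*}$. Write $a_{j}=2^{j}|E_{j}|^{1/p}$ and $b_{k}=2^{k}|F_{k}|^{1/q'}$, so that $\|f\|_{p}\sim(\sum_{j}a_{j}^{p})^{1/p}$, $\|g\|_{q'}\sim(\sum_{k}b_{k}^{q'})^{1/q'}$, and in particular $a_{j}\lesssim\|f\|_{p}$, $b_{k}\lesssim\|g\|_{q'}$. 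Let $f^{\mathrm{lo}}=\sum_{j:\,a_{j}\le\epsilon\|f\|_{p}}2^{j}f_{j}$ and $f^{\mathrm{hi}}=f-f^{\mathrm{lo}}$. Since $\|f^{\mathrm{lo}}\|_{p,\infty}\lesssim\epsilon\|f\|_{p}$, $\|f^{\mathrm{lo}}\|_{p}\le\|f\|_{p}$, and $\sum_{j}a_{j}^{r}\le(\sup_{j}a_{j})^{r-p}\sum_{j}a_{j}^{p}$ for $r>p$, one gets $\|f^{\mathrm{lo}}\|_{p,r}\lesssim\epsilon^{1-p/r}\|f\|_{p}$ for any $r\in(p,q)$; hence by Lemma~\ref{lorentzbound}, $\langle Tf^{\mathrm{lo}},g\rangle\le\|Tf^{\mathrm{lo}}\|_{q}\|g\|_{q'}\lesssim\epsilon^{1-p/r}\|f\|_{p}\|g\|_{q'}$. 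It remains to bound $\langle Tf^{\mathrm{hi}},g\rangle$, and here the key point is that every index $j$ occurring in $f^{\mathrm{hi}}$ has $a_{j}>\epsilon\|f\|_{p}$, so by the dichotomy in the hypothesis $\mathcal{T}(E_{j},F_{k})\le\epsilon|E_{j}|^{1/p}|F_{k}|^{1/q'}$ for all such $j$ and all $k$.

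To bound $\langle Tf^{\mathrm{hi}},g\rangle$ I would group both functions into dyadic blocks by coefficient size: $f^{\mathrm{hi}}=\sum_{m}f_{(m)}$, where $f_{(m)}$ collects the $2^{j}f_{j}$ with $a_{j}\in(2^{-m-1}\|f\|_{p},2^{-m}\|f\|_{p}]$, and $g=\sum_{n}g_{(n)}$ analogously in terms of the $b_{k}$. The indices $m,n$ are bounded below by an absolute constant, and $\sum_{j}a_{j}^{p}\lesssim\|f\|_{p}^{p}$, $\sum_{k}b_{k}^{q'}\lesssim\|g\|_{q'}^{q'}$ force $\#\{j\in\text{block }m\}\lesssim 2^{mp}$ and $\#\{k\in\text{block }n\}\lesssim 2^{nq'}$. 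For each pair $(m,n)$ I would record three estimates. From Lemma~\ref{lorentzbound} for $T$, together with $\|g_{(n)}\|_{q'}\lesssim\|g\|_{q'}$ and the block count,
\[
\langle Tf_{(m)},g_{(n)}\rangle\le\|Tf_{(m)}\|_{q}\|g_{(n)}\|_{q'}\lesssim\|f_{(m)}\|_{p,r}\|g\|_{q'}\lesssim 2^{-m(1-p/r)}\|f\|_{p}\|g\|_{q'};
\]
from Lemma~\ref{lorentzbound} applied to the adjoint $T^{*}$ (convolution with affine arc length on $-h(t)$, which is affinely equivalent to the moment curve, whose source/target exponents $q',p'$ satisfy $q'<p'$),
\[
\langle Tf_{(m)},g_{(n)}\rangle=\langle f_{(m)},T^{*}g_{(n)}\rangle\le\|f_{(m)}\|_{p}\,\|T^{*}g_{(n)}\|_{p'}\lesssim\|f\|_{p}\,\|g_{(n)}\|_{q',s}\lesssim 2^{-n(1-q'/s)}\|f\|_{p}\|g\|_{q'}
\]
for any $s\in(q',p')$; and from the hypothesis, bounding $f_{j}\le 2\chi_{E_{j}}$ and $g_{k}\le 2\chi_{F_{k}}$,
\[
\langle Tf_{(m)},g_{(n)}\rangle\le 4\!\!\sum_{j\in m,\,k\in n}\!\!2^{j+k}\mathcal{T}(E_{j},F_{k})\le 4\epsilon\Big(\sum_{j\in m}a_{j}\Big)\Big(\sum_{k\in n}b_{k}\Big)\lesssim\epsilon\,2^{m(p-1)}2^{n(q'-1)}\|f\|_{p}\|g\|_{q'}.
\]

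Interpolating these three estimates with weights $\alpha,\beta,\gamma\ge 0$, $\alpha+\beta+\gamma=1$, yields
\[
\langle Tf_{(m)},g_{(n)}\rangle\lesssim\epsilon^{\gamma}\,2^{-m[(1-p/r)\alpha-(p-1)\gamma]}\,2^{-n[(1-q'/s)\beta-(q'-1)\gamma]}\,\|f\|_{p}\|g\|_{q'}.
\]
Since $p<q$ throughout the segment (and hence $q'<p'$), one may take $r\in(p,q)$ close to $q$ and $s\in(q',p')$ close to $p'$ so that $1-p/r$ and $1-q'/s$ stay bounded below by a positive constant; then fixing $\gamma>0$ small and $\alpha=\beta=(1-\gamma)/2$ makes both bracketed exponents positive, and summing the resulting geometric series over $m$ and $n$ gives $\langle Tf^{\mathrm{hi}},g\rangle\lesssim\epsilon^{\gamma}\|f\|_{p}\|g\|_{q'}$. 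Combined with the bound for $f^{\mathrm{lo}}$ this proves the lemma with $c=\min(1-p/r,\gamma)$. I expect this last balancing to be the only real difficulty: the hypothesis by itself leaves the divergent sum $\epsilon\sum_{m,n}2^{m(p-1)+n(q'-1)}$, and merely using that there are only $\lesssim\epsilon^{-O(1)}$ nonzero blocks costs more powers of $\epsilon$ than one gains, so the decay in $m$ and $n$ produced by the two Lorentz refinements is essential in order to sum while retaining a positive power of $\epsilon$. The endpoints $\theta\in\{0,1\}$ need no separate argument, since the Lorentz refinements of $T$ and of $T^{*}$ and the strict inequality $p<q$ all remain valid there.
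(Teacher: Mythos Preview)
Your proof is correct. The paper does not give its own argument for this lemma; it simply cites the proof of Theorem~4.1 in \cite{BS1}, where both Lemma~\ref{lorentzbound} and Lemma~\ref{uniformsmall} are established together from the underlying restricted weak-type/extrapolation estimates. Your route is a clean repackaging of that same circle of ideas: rather than re-deriving the off-diagonal decay from scratch, you take the Lorentz refinements for $T$ and for $T^{*}$ (both instances of Lemma~\ref{lorentzbound}) as given inputs and interpolate them against the single gain from the hypothesis. The block-by-block trilinear interpolation with weights $(\alpha,\beta,\gamma)$ is exactly what is needed to make the double sum over level-set sizes converge while retaining a positive power of~$\epsilon$, and your observation that one must use the Lorentz bounds on \emph{both} sides (not just on $f$) to tame the $n$-sum is the essential point. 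Within the paper's logical structure---where Lemma~\ref{lorentzbound} is already quoted as a black box---there is no circularity, so your argument stands as a self-contained proof using only results the paper has already invoked.
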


\begin{lemma}\label{quasi}
There exist $c, C<\infty$ with the following property. For each $\epsilon> 0$, if
$f$ is a nonnegative function with rough level set decomposition $f =\sum_{j=-\infty}^{\infty}2^{j}f_j, f_j\sim\chi_{E_{j}}$
and if $f$ is a $\epsilon$-quasiextremal then there exists $j\in\mathbb{Z}$ and a paraball $B$ such that
$$
\|2^{j}\chi_{E_{j}\cap B}\|_{p}\geq c\epsilon^{C}\|f\|_{p}
$$
and
$$
|B|\leq |E_{j}|.
$$
\end{lemma}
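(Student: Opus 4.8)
The plan is to use duality to turn the quasiextremality of $f$ into a bilinear statement, apply the contrapositive of Lemma \ref{uniformsmall} to pigeonhole onto a single pair of level sets $(E_j,F_k)$, upgrade that pair to a paraball via Theorem \ref{ballandquasi}, and finally convert the resulting $\mathcal{T}$-estimate into a lower bound for $|E_j\cap B|$ using the $L^p\to L^q$ boundedness of $T$. Since $f\geq 0$ is an $\epsilon$-quasiextremal we have $Tf\geq 0$ and $\|Tf\|_q\geq\epsilon\|f\|_p$, so $g:=(Tf)^{q-1}/\|Tf\|_q^{q-1}$ is a nonnegative function in $L^{q'}$ with $\|g\|_{q'}=1$ and $\mathcal{T}(f,g)=\langle Tf,g\rangle=\|Tf\|_q\geq\epsilon\|f\|_p=\epsilon\|f\|_p\|g\|_{q'}$. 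Fix a rough level set decomposition $g=\sum_k 2^kg_k$ with $g_k\sim\chi_{F_k}$ (if one prefers a finite sum, first truncate both decompositions of $f$ and $g$ to finitely many levels, retaining at least half of each norm and of the pairing; this affects nothing below). Choose $\epsilon_1=c\,\epsilon^{1/c_0}$, where $c_0$ is the exponent and $C_0$ the constant of Lemma \ref{uniformsmall} and $c$ is small enough (depending only on $d$) that $C_0\epsilon_1^{c_0}<\epsilon$. Then the hypothesis of Lemma \ref{uniformsmall} with parameter $\epsilon_1$ must fail, which is exactly the De Morgan dual statement that there exist indices $j,k$ with
$$
\mathcal{T}(E_j,F_k)>\epsilon_1|E_j|^{1/p}|F_k|^{1-1/q}\qquad\text{and}\qquad 2^j|E_j|^{1/p}>\epsilon_1\|f\|_p .
$$
The first inequality says precisely that $(E_j,F_k)$ is an $\epsilon_1$-quasiextremal pair for $T:L^p\to L^q$.

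Now apply Theorem \ref{ballandquasi} to the pair $(E_j,F_k)$ (when $\theta\in(0,1)$ one first passes to an endpoint exponent via Lemma \ref{compa}, at the cost of a power of $\epsilon_1$): there is a paraball $B$ with
$$
\mathcal{T}(E_j\cap B,F_k\cap B^*)\geq C^{-1}\epsilon_1^{C}\,\mathcal{T}(E_j,F_k),\qquad |B|\leq|E_j|,\qquad |B^*|\leq|F_k| .
$$
The estimate $|B|\leq|E_j|$ is one of the two conclusions we need. For the other, bound the left side from above by the operator norm, $\mathcal{T}(E_j\cap B,F_k\cap B^*)\leq A\,|E_j\cap B|^{1/p}|F_k\cap B^*|^{1/q'}\leq A\,|E_j\cap B|^{1/p}|F_k|^{1/q'}$, and the right side from below using $\mathcal{T}(E_j,F_k)>\epsilon_1|E_j|^{1/p}|F_k|^{1/q'}$; cancelling $|F_k|^{1/q'}$ yields $|E_j\cap B|\geq c\,\epsilon_1^{C'}|E_j|$ with constants depending only on $d$. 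Combining this with $2^j|E_j|^{1/p}>\epsilon_1\|f\|_p$ gives
$$
\|2^j\chi_{E_j\cap B}\|_p=2^j|E_j\cap B|^{1/p}\geq c\,\epsilon_1^{C'/p}\,2^j|E_j|^{1/p}>c\,\epsilon_1^{C'/p+1}\|f\|_p ,
$$
and substituting $\epsilon_1=c\,\epsilon^{1/c_0}$ and relabeling constants yields $\|2^j\chi_{E_j\cap B}\|_p\geq c\,\epsilon^{C}\|f\|_p$, which together with $|B|\leq|E_j|$ is the assertion.

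Once the three inputs (duality, Lemma \ref{uniformsmall}, Theorem \ref{ballandquasi}) are in place, the argument is just bookkeeping of powers of $\epsilon$. The only point requiring a moment's attention is extracting, from the failure of the hypothesis of Lemma \ref{uniformsmall}, the two conditions ``$(E_j,F_k)$ is not a small pair'' and ``$E_j$ is not a small level'' at the \emph{same} index $(j,k)$, and — if one insists on finite sums — checking that truncating the level set decompositions loses only a bounded factor in $\|f\|_p$, $\|g\|_{q'}$, and $\mathcal{T}(f,g)$. I do not anticipate any deeper obstacle.
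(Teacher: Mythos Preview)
Your proof is correct and follows essentially the same route as the paper's own argument: take the contrapositive of Lemma~\ref{uniformsmall} to locate a single index $j$ (and a companion $k$) with $2^{j}|E_j|^{1/p}\gtrsim\epsilon^{C}\|f\|_p$ and $(E_j,F_k)$ an $\epsilon^{c}$-quasiextremal pair, then feed that pair into Theorem~\ref{ballandquasi} to obtain the paraball $B$ with $|B|\le|E_j|$ and $|E_j\cap B|\gtrsim\epsilon^{C}|E_j|$. The only superfluous remark is the parenthetical about ``passing to an endpoint exponent via Lemma~\ref{compa}'' --- Theorem~\ref{ballandquasi} is already stated and proved for every $\theta\in[0,1]$, so no such detour is needed.
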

\begin{proof}
The proof of this lemma follows directly from Theorem \ref{ballandquasi} and the previous lemma.
If $f =\sum_{j}2^{j}f_j, f_j\sim\chi_{E_{j}}$ by the previous lemma there exists $j\in\mathbb{Z}$ such that
$\|2^{j}\chi_{E_{j}\cap B}\|_{p}\geq c\epsilon^{C}\|f\|_{p}$ and $E_{j}$ is an 
$\epsilon^{c}$-quasietremal. Now we apply Theorem (\ref{ballandquasi}) to $E_{j}$ to get the
desired conclusion.
\end{proof}
\smallskip

\section{Two key Lemmas}
In this section we shall prove two lemmas that will be used in the 
later sections. The first lemma is about how paraballs interact with 
each other when they are distant from each other. It shows in some sense
when we have a collection of paraballs which are at a large distance 
from each other, then their image under $T$ act on nearly disjoint portions of any given set. The 
precise statement is given below.

\begin{lemma}\label{orthogonal}
Let $d> 2$ and let $(\frac{1}{p}, \frac{1}{q})$ be on the line segment joining the points 
$(\frac{2}{d+1}, \frac{2(d-1)}{d(d+1)})$ and $(1-\frac{2(d-1)}{d(d+1)}, 1-\frac{2}{d+1})$. 
Then there exists a positive finite constant $C$ depending only on $d$
with the following property. Let $\{B_{i}\}_{i\in S}$ be a collection of paraballs such that
for any $i\neq j$ with $i,j\in S$ we have $d(B_i,B_j)\geq C \eta^{-C}$ for some $\eta > 0$.
Then for any $F$ subset of $\mathbb{R}^d$ with positive finite Lebesgue measure, we can write
$F=\sqcup F_{i}$ so that
$$
T(B_i, F_j)\leq \eta |B_i|^{\frac{1}{p}} |F_j|^{\frac{1}{q'}},   \text {for all}\quad i\neq j.
$$
\end{lemma}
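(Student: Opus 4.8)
The plan is to reduce the statement to a pigeonhole-type dichotomy at the level of paraball pairs: either two paraballs $B_i,B_j$ have comparable measures and positions, in which case $d(B_i,B_j)$ is small (contradicting the hypothesis $d(B_i,B_j)\ge C\eta^{-C}$), or they are genuinely far apart, in which case $T(B_i,F)$ and $T(B_j,F)$ are forced to concentrate on nearly disjoint subsets of $F$. First I would recall the elementary incidence estimate that $T(B_i,F_i')\le |B_i|^{1/p}|F_i'|^{1/q'}$ holds automatically (up to the operator norm, which is absorbed into $C$) by Hölder, so the only real content is to assign to each $i$ a set $F_i\subset F$ carrying essentially all of the mass of $T(B_i,\,\cdot\,)$, and then show the \emph{cross} terms $T(B_i,F_j)$ with $i\ne j$ are tiny. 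The natural choice is $F_i=\{x\in F: T\chi_{B_i}(x) \text{ is ``dominant'' among } \{T\chi_{B_j}(x)\}_{j}\}$, say where $T\chi_{B_i}(x)\ge \lambda^{-1}\sup_j T\chi_{B_j}(x)$ with $\lambda$ a small power of $\eta$, breaking ties by an arbitrary fixed ordering of $S$ so that the $F_i$ are pairwise disjoint and exhaust $F$.

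With this choice, the estimate on cross terms proceeds as follows. On $F_j$ we have $T\chi_{B_i}(x)\le \lambda T\chi_{B_j}(x)$ by definition, so $T(B_i,F_j)=\int_{F_j}T\chi_{B_i}\le \lambda \int_{F_j}T\chi_{B_j}\le \lambda\, T(B_j,F_j)\le \lambda |B_j|^{1/p}|F_j|^{1/q'}$. This is not yet of the desired form since it involves $|B_j|$ rather than $|B_i|$; to fix this I would bring in the covering Lemma (``$B^a\subset C\,d(B^a,B^b)^C B^b$'') together with its dual version, which give that whenever $T\chi_{B_i}$ and $T\chi_{B_j}$ are simultaneously non-negligible at a common point $x$, the quantities $|B_i|,|B_j|,|B_i^*|,|B_j^*|$ and the centers are all comparable up to a power of $d(B_i,B_j)$ — but this is exactly the situation excluded when $d(B_i,B_j)$ is huge. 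Concretely, Proposition \ref{distintersection} and the covering lemmas let me show that if $F_j$ (the set where $B_j$ dominates) were to receive a non-trivial amount of $T(B_i,\,\cdot\,)$-mass, then $d(B_i,B_j)$ would be controlled by a power of the relevant measure ratios, hence bounded, a contradiction; alternatively, the same input shows $|B_j|\le d(B_i,B_j)^C|B_i|$ on the relevant part of $F_j$, converting the bound above into $\lambda\, d(B_i,B_j)^{C'}|B_i|^{1/p}|F_j|^{1/q'}$, and then choosing $\lambda=\eta\cdot C\eta^{-C}\cdot(\text{small})$ — equivalently taking the threshold $C\eta^{-C}$ in the hypothesis large enough relative to the exponents appearing — makes this $\le\eta |B_i|^{1/p}|F_j|^{1/q'}$.

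I expect the main obstacle to be the bookkeeping around the non-commutativity between $B_i$ and $B_i^*$: the term $T(B_i,F_j)$ naturally pairs $B_i$ with a subset of $F$, so $|B_i^*|$ enters, and the interaction estimates in Proposition \ref{distintersection} are phrased symmetrically in $B$ and $B^*$ but with the two "halves" of the pseudo-distance playing different roles. I would handle this by running the domination argument at the level of the bilinear form with the $T$-geometry made explicit via the parametrization of $U=\{(x,t): x\in F, x+h(t)\in B_i\}$ from Lemma \ref{EL1}, so that "$T\chi_{B_i}(x)$ large" translates into a statement about $x$ lying near the dual paraball $B_i^*$ up to a power of $\eta$; then "two dual paraballs $B_i^*,B_j^*$ both nearly containing a common $x$" feeds into the dual covering lemma. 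The remaining work — choosing the precise powers so that all the $\eta$-losses from the several covering/comparison steps compose to leave a clean $\eta$ on the right — is routine but needs care, and is the sort of thing handled in \cite{MC5} for the paraboloid; the proof here should parallel that argument with paraballs in place of Christ's "bushes."
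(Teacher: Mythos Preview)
Your argument has a genuine gap at the step where you convert $|B_j|^{1/p}$ to $|B_i|^{1/p}$. You arrive at $T(B_i,F_j)\le\lambda\,|B_j|^{1/p}|F_j|^{1/q'}$ and then invoke $|B_j|\le d(B_i,B_j)^C|B_i|$ to get $\lambda\,d(B_i,B_j)^{C'}|B_i|^{1/p}|F_j|^{1/q'}$. But the hypothesis is a \emph{lower} bound $d(B_i,B_j)\ge C\eta^{-C}$; there is no upper bound whatsoever on $d(B_i,B_j)$, and in particular the volume ratio $|B_j|/|B_i|$ (which is one of the summands in the pseudo-distance) can be arbitrarily large. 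No choice of $\lambda$ as a fixed power of $\eta$ can absorb an uncontrolled factor $d(B_i,B_j)^{C'}$. The alternative route you sketch --- ``if $T\chi_{B_i}$ and $T\chi_{B_j}$ are simultaneously non-negligible at a common point then the paraballs are comparable'' --- is exactly the contradiction you need, but pointwise non-negligibility is not enough input for Proposition~\ref{distintersection} or the covering lemmas; those require control of the \emph{measure} of an overlap region.

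The paper avoids this trap by choosing the partition differently. Instead of comparing the values $T\chi_{B_i}(x)$ against each other, it sets an absolute threshold $\gamma_i=\tfrac13\eta\,|F|^{1/q'-1}|B_i|^{1/p}$ for each $i$ separately and defines $\tilde F_i=\{x\in F:T\chi_{B_i}(x)>\gamma_i\}$, then refines to a disjoint family $F_i\subset\tilde F_i$. The point is that $T(B_i,F\setminus\tilde F_i)\le\tfrac13\eta\,|B_i|^{1/p}|F|^{1/q'}$ \emph{automatically}, with the correct power of $|B_i|$, so the only danger is the overlap $\mathcal F=F_j\cap\tilde F_i$. If the cross term $T(B_i,F_j)$ is large, then $\mathcal F$ has measure $\gtrsim\eta^C|F|$ and both $(B_i,\mathcal F)$ and $(B_j,\mathcal F)$ are $c\eta^C$-quasiextremal pairs. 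Two applications of Theorem~\ref{ballandquasi} then produce auxiliary paraballs $\tilde B_i,\tilde B_j$ with $|\tilde B_i^*\cap\tilde B_j^*|\gtrsim\eta^C\max(|\tilde B_i^*|,|\tilde B_j^*|)$, whence Proposition~\ref{distintersection} and the quasi-triangle inequality give $d(B_i,B_j)\le C\eta^{-C}$, contradicting the hypothesis. The key structural input you are missing is this double use of Theorem~\ref{ballandquasi} to pass from ``large cross term'' to ``large dual-paraball intersection''; your domination-by-sup construction does not supply it.
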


\begin{proof}
The proof of this lemma will be a straightforward adaptation of the proof of Lemma $4.1$ in \cite{MC5}.
For the sake of completeness we give a sketch of the proof here. Define
$$
\gamma_i=\frac{1}{3}\eta  |F|^{\frac{1}{q'}-1} |B_i|^\frac{1}{p}
$$
and 
\begin{equation}\label{big}
\tilde F_i=\{x\in F: T(B_i)> \gamma_i \}.
\end{equation}

We note that 
\begin{equation}\label{smaller}
T(B_i, F\setminus \tilde F_i)\leq \gamma_i |F|\leq \frac{1}{3}\eta  |B_i|^{\frac{1}{p}} |F|^\frac{1}{q'}.
\end{equation}

Now choose $F_i\subset \tilde F_i$ such that $\cup_i \tilde F_i=\sqcup F_{i}$. Note that the there are many choices of
$F_i$. We just choose one such collection. Also, there might be elements in $F$ which do not belong to $\tilde{F}_i$ for all $i\in S$. We 
pick one $F_i$ and include these points to this particular set.
Since by (\ref{smaller}) we have $T(B_i, F\setminus \sqcup F_i)\leq  \frac{1}{3}\eta  |B_i|^{\frac{1}{p}} |F|^\frac{1}{q'}$,
it is enough to prove that  for $i\neq j$
\begin{equation}\label{smallest}
T(B_i, F_j)\leq  \frac{2}{3}\eta  |B_i|^{\frac{1}{p}} |F_j|^\frac{1}{q'}.
\end{equation}
Suppose (\ref{smallest}) does not hold. Then there exists $i\neq j$ such that
$T(B_i, F_j)> \frac{2}{3}\eta  |B_i|^{\frac{1}{p}} |F_j|^\frac{1}{q'}$.
For the rest of this proof we fix these two indices $i,j$.
Define $\mathcal{F}=F_j\cap \tilde F_i$. By (\ref{smaller}) $T(B_i, F_j\setminus\tilde F_i)
\leq \frac{1}{3}\eta  |B_i|^{\frac{1}{p}} |F|^\frac{1}{q'}$, so we have
\begin{equation}
\frac{1}{3}\eta  |B_i|^{\frac{1}{p}} |F|^\frac{1}{q'}\leq \mathcal{T}(B_i,\mathcal{F})\leq A |B_i|^{\frac{1}{p}}|\mathcal{F}|^{\frac{1}{q'}}.
\end{equation}
This implies
$$
|\mathcal{F}|\geq \bigg(\frac{1}{3}\bigg)^{q'} \eta^{q'}A^{-q'} |F|.
$$
Now we apply Theorem \ref{ballandquasi} to the pair $(B_j,\mathcal{F})$ to obtain 
a paraball $\tilde B_j$ such that 
\begin{equation}
|\tilde B_j|\leq |B_j|, \quad |\tilde B_j^*|\leq |\mathcal{F}|\leq |F|,\quad\quad
|\tilde B_j\cap B_j|\geq c \eta^\gamma |B_j|,\quad |\tilde B_j^*\cap \mathcal{F}|\geq c \eta^\gamma |F|.
\end{equation}

Now we replace $\mathcal{F}$ by $\tilde{\mathcal{F}}=\mathcal{F}\cap\tilde{B_j}^*$.
Since $|\tilde{\mathcal{F}}|\geq c \eta^\gamma |F|$ and $T(B_i)(x)> \gamma_i$ for all
$x\in F_i\supset \mathcal{F}\supset \tilde{\mathcal{F}}$, consequently
$$
T(B_i,\tilde{\mathcal{F}})\geq \gamma_i |\tilde{\mathcal{F}}|\geq c\eta^{\gamma} |\tilde{\mathcal{F}}|^{\frac{1}{q'}} |B_i|^{\frac{1}{p}}.
$$
This means the pair $(B_i, \tilde{\mathcal{F}})$ is a $c\eta^{\gamma}$-quasietxremal.
Therefore by applying Theorem \ref{ballandquasi} once more we get another paraball
$\tilde{B_i}$ such that
\begin{equation}
|\tilde B_i|\leq |B_i|, \quad |\tilde B_i^*|\leq |\tilde{\mathcal{F}}|\leq |F|,\quad\quad
|\tilde B_i\cap B_i|\geq c \eta^\gamma |B_i|,\quad |\tilde B_i^*\cap \tilde{\mathcal{F}}|\geq c \eta^\gamma |F|.
\end{equation}
Since $\tilde{B_i}^*\cap\tilde{B_j}^*\supset  \tilde{B_i}^*\cap\tilde{B_j}^*\cap \mathcal{F}\supset \tilde{B_i}^*\cap\tilde{\mathcal{F}}$,
we have 
$$
|\tilde{B_i}^*\cap\tilde{B_j}^*|\geq | \tilde{B_i}^*\cap\tilde{\mathcal{F}}|\geq c\eta^{\gamma} |F|\geq c \eta^{\gamma} \text{max}(|\tilde{B_i}^*|, |\tilde{B_j}^*|).
$$

Now by applying Proposition~\ref{distintersection} to the pair of dual paraballs $(\tilde{B_i}^*, \tilde{B_j}^*)$
we get $d(\tilde{B_i}^*, \tilde{B_j}^*)\leq C \eta^{-C}$. This implies
$$
d(\tilde{B_i}, \tilde{B_j})\leq C \eta^{-C}.
$$
Since $|\tilde{B_i}|\leq |B_i|$ and $|\tilde{B_i}\cap B_i|\geq c \eta^{\gamma} |B_i|$, we have 
$$
d(\tilde{B_i}, B_i)\leq C \eta^{-C}.
$$
Similarly 
$$
d(\tilde{B_j}, B_j)\leq C \eta^{-C}.
$$
By applying Lemma \ref{triangle}  we get $d(B_i, B_j)\leq C \eta^{-C}$, which contradicts our hypothesis.
\end{proof}

\begin{lemma}\label{extrapolation}
Let $d>2$ and $(\frac{1}{p}, \frac{1}{q})$ be a point on the line segment joining the points 
$(\frac{2}{d+1}, \frac{2(d-1)}{d(d+1)})$ and $(1-\frac{2(d-1)}{d(d+1)}, 1-\frac{2}{d+1})$.
There exists $C,C^{'}$ positive finite constants depending only on $d$ with the
following property. Let $E_1,E_2, F$ be subsets of $\mathbb{R}^d$ with positive finite Lebesgue
measure such that $T(\chi_{E_1})\geq \eta |E_1|^{\frac{1}{p}} |F|^{{\frac{1}{q^{'}}}-1}$  and 
$T(\chi_{E_2})\geq \eta |E_2|^{\frac{1}{p}} |F|^{{{\frac{1}{q^{'}}}-1}}$ on $F$, then if $|E_2|\geq |E_1|$ we have
$|E_2|\leq C^{'} \eta^{-C} |E_1|$.
\end{lemma}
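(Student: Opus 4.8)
The plan is to exhibit $(E_1,F)$, $(E_2,F)$, and — the crucial extra pair — $(E_1,\,F\cap (B^2)^\ast)$ for a suitable paraball $B^2$, all as quasiextremal pairs, apply Theorem \ref{ballandquasi} to each, and then compare the resulting paraballs via Proposition \ref{distintersection}, the $\ast$-symmetry $d(B^a,B^b)=d((B^a)^\ast,(B^b)^\ast)$, and the covering property of paraballs. First I would note that integrating the hypothesis $T(\chi_{E_i})\ge\eta|E_i|^{1/p}|F|^{1/q'-1}$ over $F$ gives $\mathcal{T}(E_i,F)\ge\eta|E_i|^{1/p}|F|^{1/q'}$, so each $(E_i,F)$ is an $\eta$-quasiextremal pair for $T:L^p\to L^q$. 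Theorem \ref{ballandquasi} then provides paraballs $B^1,B^2$ with $|B^i|\le|E_i|$, $|(B^i)^\ast|\le|F|$, and $\mathcal{T}(E_i\cap B^i,\,F\cap (B^i)^\ast)\ge c\eta^{C}\mathcal{T}(E_i,F)\ge c\eta^{C}|E_i|^{1/p}|F|^{1/q'}$. Combining the last inequality with the boundedness bound $\mathcal{T}(E_i\cap B^i,\,F\cap (B^i)^\ast)\le A\,|E_i\cap B^i|^{1/p}|F\cap (B^i)^\ast|^{1/q'}$ and the trivial inclusions $|E_i\cap B^i|\le|E_i|$, $|F\cap (B^i)^\ast|\le|F|$, I would read off the reverse estimates $|B^i|\ge c\eta^{C}|E_i|$ and $|F\cap (B^i)^\ast|\ge c\eta^{C}|F|$.

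Next, put $G:=F\cap (B^2)^\ast$, so $|G|\ge c\eta^{C}|F|$. Since $T(\chi_{E_1})\ge\eta|E_1|^{1/p}|F|^{1/q'-1}$ holds pointwise on $F$, hence on $G$, and $|F|^{1/q'-1}|G|=(|G|/|F|)^{1/q}|G|^{1/q'}$, a short computation shows $(E_1,G)$ is a $c\eta^{C}$-quasiextremal pair; applying Theorem \ref{ballandquasi} to it produces a paraball $\tilde B$ with $|\tilde B|\le|E_1|$, $|\tilde B^\ast|\le|G|$, and — arguing as before — $|G\cap \tilde B^\ast|\ge c\eta^{C}|G|$. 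Now $G\cap \tilde B^\ast\subset (B^2)^\ast\cap \tilde B^\ast$, and since $|(B^2)^\ast|,|\tilde B^\ast|\le|F|$ we get $|G\cap \tilde B^\ast|\ge c\eta^{C}|G|\ge c\eta^{C}|F|\ge c\eta^{C}\max(|(B^2)^\ast|,|\tilde B^\ast|)$; therefore Proposition \ref{distintersection}, applied to the pair of dual paraballs and combined with the $\ast$-symmetry of $d$, gives $d(\tilde B,B^2)\le C\eta^{-C}$. Consequently $B^2\subset C\eta^{-C}\tilde B$ as scaled paraballs, so $|B^2|\le C\eta^{-C}|\tilde B|\le C\eta^{-C}|E_1|$, while $|B^2|\ge c\eta^{C}|E_2|$ by the first paragraph; chaining these yields $|E_2|\le C'\eta^{-C}|E_1|$. (The hypothesis $|E_2|\ge|E_1|$ only pins down which direction of the estimate we record; the symmetric argument gives $|E_1|\le C'\eta^{-C}|E_2|$.)

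The step I expect to be the main obstacle is the one that forces $\tilde B$ to be close to $B^2$. The hypotheses control $T(\chi_{E_i})$ only \emph{pointwise} on $F$, and since $F$ may be much larger than any single dual paraball, the paraballs that Theorem \ref{ballandquasi} assigns to $(E_1,F)$ and $(E_2,F)$ could a priori be supported over essentially disjoint portions of $F$ and hence not be directly comparable. The remedy — and the one place where the pointwise, rather than merely averaged, strength of the hypothesis is genuinely needed — is to restrict the target set to $G=F\cap (B^2)^\ast$ and re-run Theorem \ref{ballandquasi} there, which forces $\tilde B^\ast$ to overlap $(B^2)^\ast$ in a fixed proportion; once that is in hand, the comparison through Proposition \ref{distintersection}, the $\ast$-symmetry of $d$, and the covering property of paraballs is routine.
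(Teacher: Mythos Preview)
Your argument is correct and takes a genuinely different route from the paper. The paper proves Lemma~\ref{extrapolation} by a direct extrapolation computation in the spirit of Christ: starting from a point $\bar y\in E_1$ it iterates the curve-following construction of Lemma~\ref{EL1}, alternating between $E_1$ and $F$ for $d-1$ steps and then jumping into $E_2$ at the last step; the Vandermonde Jacobian of $(t_1,\dots,t_d)\mapsto \bar y-h(t_1)+\cdots\pm h(t_d)$ then gives a lower bound for $|E_2|$ in terms of $\alpha,\beta,\gamma$, and substituting $\alpha,\beta,\gamma$ in terms of $|E_1|,|E_2|,|F|$ yields the inequality at each endpoint, with the intermediate $(p,q)$ handled by a simple dichotomy.

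Your proof instead recycles the paraball machinery already built in \S5--\S\ref{chap3}: you feed $(E_1,F)$, $(E_2,F)$, and then $(E_1,\,F\cap(B^2)^\ast)$ through Theorem~\ref{ballandquasi}, and compare the outputs via Proposition~\ref{distintersection} and the covering lemma. This is essentially the same mechanism the paper later uses in the proof of Lemma~\ref{orthogonal}, so your approach is very much in the spirit of the paper---just applied to a different lemma. What the paper's direct argument buys is self-containment (it does not rely on Theorem~\ref{ballandquasi}) and an explicit exponent $C$ at each endpoint; what your argument buys is brevity and modularity once the paraball theory is in place, and it treats all $(p,q)$ on the segment uniformly without the endpoint-by-endpoint case split. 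Your closing remark is also correct: the hypothesis $|E_2|\ge|E_1|$ is not used in your argument, which in fact yields the two-sided estimate.
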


\begin{proof}
This lemma  is essentially proved in the proof of Theorem $4.1$ in \cite{BS1}
by applying extrapolation method of Christ. Here we give a simplified proof
using the increasing structure, $(t_1< t_2<...<t_d)$ of $\Omega$ in Lemma \ref{EL2}. We shall give
the proof when $d$ is even for the other case being similar.

Let $p_0=\frac{d+1}{2}$ and $q_0=\frac{d(d+1)}{2(d-1)}$. Let us first consider the case
when $p=p_0$ and $q=q_0$. Define
$$
\alpha=\eta |E_1|^{\frac{1}{p_0}-1} |F|^{\frac{1}{{q_0}^{'}}},\quad \beta=\eta |E_1|^{\frac{1}{p_0}} |F|^{\frac{1}{{q_0}^{'}}-1} 
\text{and} \quad \gamma=\eta |E_2|^{\frac{1}{p_0}} |F|^{\frac{1}{{q_0}^{'}}-1}.
$$

Since $T(\chi_{E_1})\geq \eta |E_1|^{\frac{1}{p_0}} |F|^{\frac{1}{{q_0}^{'}}-1}$ on $F$,
we have 
$$
\langle \chi_{E_1}, T^{*}(\chi_{F})\rangle=T(E_1,F)\geq \eta |E_1|^{\frac{1}{p_0}} |F|^{\frac{1}{{q_0}^{'}}}.
$$
Therefore on a large subset of $E_1$, $T^{*}(\chi_{F})\geq\alpha$. Similarly
on a large subset of $F$, $T(\chi_{E_1})\geq \beta$ and $T(\chi_{E_2})\geq \gamma$.

Similar to the proof of Theorem \ref{ballandquasi} there exists
a point $\bar{y}\in E_1$ such that we can travel along the curve 
shifted to $\bar{y}$ inside $F$ for a length of $\alpha$. Then for each of 
these points on this travelled path we can travel back inside
$E_1$ for a length of $\beta$. We continue this process $d-1$ times. 
At the $d$th step we move into $E_2$ along the curve for a length $\gamma$.

As a result we get a  $\Omega\subset\mathbb{R}^{d}$ such that

\begin{itemize}
\item $|\Omega|=c \alpha^{\frac{d}{2}} \beta^{\frac{d}{2}-1} \gamma$;
\item  $\bar y - h(t_1)+h(t_2)-h(t_3)-...+h(t_j)\in E_1$ for every $t=(t_1,....,t_d)\in\Omega$ and for every even $j\leq d-2$;
\item $\bar y - h(t_1)+h(t_2)-h(t_3)-...-h(t_j)\in F$ for every $t=(t_1,....,t_d)\in\Omega$ and for every odd $j\leq d-1$;
\item $\bar y - h(t_1)+h(t_2)-h(t_3)-...+h(t_d)\in E_2$;
\item  $t_1< t_2<....< t_d$ for every $t=(t_1,t_2,...,t_d)\in\Omega$.
\end{itemize}

Now we consider the Jacobian, $J(t)$, of the map $(t_1,t_2,...,t_d)\mapsto \bar x + h(t_1)-h(t_2)+h(t_3)-...-h(t_d)$.
We have 
$$
J\geq \prod_{1\leq i< j\leq d}|t_i-t_j|.
$$
Since $t_1< t_2< ...< t_d$ for every $t\in\Omega$, we have
\begin{itemize}
\item $\prod_{1\leq i< d}(t_i-t_d)\geq \gamma^{d-1}$;
\item $\prod_{1\leq i< j}(t_i-t_j) \geq \beta^{j-1}$ for every even $j< d$;
\item $\prod_{1\leq i< j}(t_i-t_j)\geq \alpha \beta^{j-2}$ for every odd $j \leq d-1$.
\end{itemize}

Therefore as in the proof of Theorem \ref{ballandquasi} we get 
$$
|E_2|\geq|\Omega|\, \min_{t} J(t)\geq c \alpha^{\frac{d}{2}} \beta^{\frac{d}{2}-1} \gamma \quad \alpha^{\frac{d}{2}-1} \beta^{\frac{{d-2}^2}{2}} \gamma^{d-1}.
$$
After substituting the values of $\alpha$, $\beta$ and $\gamma$ in terms of 
$|E_1|$, $|E_2|$ and $|F|$ we get 
$$
|E_2|\geq c \eta^{\frac{d(d-1)}{2}} |E_2|^{\frac{d}{p_0}} |E_1|^{\big(d-1\big)\big(\frac{1}{p_0}-1\big)+\frac{\big(\frac{d}{2}-1\big)\big(d-1\big)}{p_0}}
\,\,|F|^{d\big(\frac{1}{q^{'}_0}-1\big)+\frac{d-1}{q^{'}_0}+\big(\frac{d}{2}-1\big)\big(d-1\big)\big(\frac{1}{q^{'}_0}-1\big)}
$$
which implies
$$
|E_2|^{-\frac{d-1}{d+1}}\geq c \eta^{\frac{d(d-1)}{2}} |E_1|^{-\frac{d-1}{d+1}}.
$$
This is equivalent to 
$$
|E_2|\leq C \eta^{-\frac{d(d+1)}{2}} |E_1|.
$$

Now let us consider the case when $\frac{1}{p}=\frac{1}{p_1}=1-\frac{2(d-1)}{d(d+1)}$ and $\frac{1}{q}=\frac{1}{q_1}
=1-\frac{2}{d+1}$. The argument in this case is similar to the above case. Let 
$\alpha, \beta, \gamma$, $\Omega$, $\bar{y}$ and $J$ be as before.
 
Since $t_1< t_2< ...< t_d$ for every $t\in\Omega$, we have
\begin{itemize}
\item $\prod_{1\leq i< d}(t_i-t_d)\geq \gamma \alpha^{d-2} $;
\item $\prod_{1\leq i< j}(t_i-t_j) \geq \beta \alpha^{j-2}$ for every even $j< d$;
\item $\prod_{1\leq i< j}(t_i-t_j)\geq \alpha^{j-1}$ for every odd $j \leq d-1$.
\end{itemize}

Therefore we have 
\begin{multline*}
\quad\quad \quad\quad \quad |E_2|\geq|\Omega|\, \min_{t} J(t)\geq c \alpha^{\frac{d}{2}} \beta^{\frac{d}{2}-1}\gamma \quad \alpha^{d(\frac{d}{2}-1)} \beta^{\frac{d}{2}-1} \gamma\\
                           =c \eta^{\frac{d(d-1)}{2}} |E_2|^{\frac{2}{p_2}}\,\, |E_1|^{\frac{d-2}{p_2}+\big(\frac{1}{p_2}-1\big)\frac{d}{2}(d-1)}\,\, |F|^{d\big(\frac{1}{q_2^{'}}-1\big)+\frac{1}{q_2^{'}}\frac{d}{2}\big(d-1\big)}.\\
\end{multline*}
This implies
$$
|E_2|^{-\frac{d^2-3d+4}{d(d+1)}}\geq c \eta^{\frac{d(d-1)}{2}} |E_1|^{-\frac{d^2-3d+4}{d(d+1)}}.
$$
This is equivalent to 
$$
|E_2|\leq C \eta^{-\frac{d^2(d^2-1)}{d^2-3d+4}} |E_1|.
$$

We shall now consider the case when
$$
\frac{1}{p}=\frac{\theta}{p_1}+\frac{1-\theta}{p_0}\quad \text{and} \quad \frac{1}{q}=\frac{\theta}{q_1}+\frac{1-\theta}{q_0}
$$
for some $\theta\in (0,1)$ and $p_0,p_1,q_0,q_1$ as mentioned earlier. By the hypothesis of the theorem
we have
\begin{multline*}
\quad\quad\quad \quad\quad\quad T(\chi_{E_1})\geq \eta |E_1|^{\frac{\theta}{p_1}+\frac{1-\theta}{p_0}} |F|^{\frac{\theta}{{q_1}^{'}}+\frac{1-\theta}{{q_0}^{'}}-1}\\
                     =  (\eta |E_1|^{\frac{1}{p_0}} |F|^{{\frac{1}{{q_0}^{'}}}-1})^{1-\theta} (\eta |E_1|^{\frac{1}{p_1}} |F|^{{\frac{1}{{q_1}^{'}}}-1})^{\theta}.\\
\end{multline*}
and 
\begin{multline*}
\quad\quad\quad \quad\quad\quad T(\chi_{E_2})\geq \eta |E_2|^{\frac{\theta}{p_1}+\frac{1-\theta}{p_0}} |F|^{\frac{\theta}{{q_1}^{'}}+\frac{1-\theta}{{q_0}^{'}}-1}\\
                     =  (\eta |E_2|^{\frac{1}{p_0}} |F|^{{\frac{1}{{q_0}^{'}}}-1})^{1-\theta} (\eta |E_2|^{\frac{1}{p_1}} |F|^{{\frac{1}{{q_1}^{'}}}-1})^{\theta}.\\
\end{multline*}
Now let us consider the case when 
$$
|E_2|^{\frac{1}{p_0}} |F|^{{\frac{1}{{q_0}^{'}}}-1}\geq
|E_2|^{\frac{1}{p_1}} |F|^{{\frac{1}{{q_1}^{'}}}-1}.
$$
This is equivalent to 
$|E_2|^{\frac{1}{p_0}-\frac{1}{p_1}}\leq |F|^{\frac{1}{{q_1}^{'}}-\frac{1}{{q_1}^{'}}}$. Since
$|E_2|\geq |E_1|$, we also have $|E_1|^{\frac{1}{p_0}-\frac{1}{p_1}}\leq |E_2|^{\frac{1}{p_0}-\frac{1}{p_1}}
 \leq |F|^{\frac{1}{{q_1}^{'}}-\frac{1}{{q_0}^{'}}}$. This implies
 $$
|E_1|^{\frac{1}{p_0}} |F|^{{\frac{1}{{q_0}^{'}}}-1}\geq
|E_1|^{\frac{1}{p_1}} |F|^{{\frac{1}{{q_1}^{'}}}-1}.
 $$
Therefore we have  for all $x\in F$
\begin{itemize}
\item $T(\chi_{E_1})(x) \geq \eta |E_1|^{\frac{1}{p_0}} |F|^{{\frac{1}{{q_0}^{'}}}-1}$;
\item $T(\chi_{E_2})(x) \geq \eta |E_2|^{\frac{1}{p_0}} |F|^{{\frac{1}{{q_0}^{'}}}-1}$.
\end{itemize}

Now we apply the proof for the case $(p,q)=(p_0,q_0)$ to get the desired inequality.
For the other case we have 
$$
|E_2|^{\frac{1}{p_0}} |F|^{{\frac{1}{{q_0}^{'}}}-1}\leq
|E_2|^{\frac{1}{p_1}} |F|^{{\frac{1}{{q_1}^{'}}}-1}.
$$
In this case we apply the proof for $(p,q)=(p_1,q_1)$ to get the desired inequality.
\end{proof}

\smallskip

\section{Entropy refinement}
The following lemma is proved for the paraboloid in Lemma $5.3$ in \cite{MC5}. The proof for the moment curve is almost identical and so we omit the proof.
\begin{lemma}\label{entref}
Let $d\geq 2$. There exist $c, C< \infty$ with the following property. Let $\delta> 0$. 
Let $f$ be any function in $L^{p}(\mathbb{R}^d)$ satisfying
$\|Tf\|_{q}\geq (1-\delta) A \|f\|_{p}$ that has rough level set decomposition
$f=\sum_{j\in\mathbb{Z}} 2^j f_j, \chi_{E_j}\leq f_j\leq 2\chi_{E_j}$. Then for any $\eta\in(0, 1]$, 
$$
\big\| \sum_{j:2^{j} |E_{j}|^{\frac{1}{p}}< \eta \|f\|_{p}} 2^{j} f_j\big\|_{p}\leq C(\delta^{\frac{1}{p}}+\eta^c) \|f\|_{p}.
$$
\end{lemma}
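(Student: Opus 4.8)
The plan is to write $f=g+b$, where $b=\sum_{j:\,2^{j}|E_{j}|^{1/p}<\eta\|f\|_{p}}2^{j}f_{j}$ is exactly the sum whose $L^{p}$-norm we must estimate and $g=\sum_{j:\,2^{j}|E_{j}|^{1/p}\ge\eta\|f\|_{p}}2^{j}f_{j}$ collects the remaining ``large'' pieces. Since the $E_{j}$ are pairwise disjoint, $g$ and $b$ have disjoint supports, so $\|f\|_{p}^{p}=\|g\|_{p}^{p}+\|b\|_{p}^{p}$; thus the whole problem reduces to showing that the near-extremality hypothesis forces $\|g\|_{p}$ to be nearly all of $\|f\|_{p}$.

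First I would show that $T$ scarcely sees $b$. Fix $r=\tfrac{p+q}{2}$, which lies in $(p,q)$ because $p<q$ for every admissible pair \eqref{pvalue}. Writing $a_{j}=2^{j}|E_{j}|^{1/p}$ and applying the approximate Lorentz-norm formula to the rough level set decomposition $b=\sum_{j:\,a_{j}<\eta\|f\|_{p}}2^{j}f_{j}$, and using $r>p$,
\[
\|b\|_{p,r}^{r}\sim\sum_{a_{j}<\eta\|f\|_{p}}a_{j}^{r}\le(\eta\|f\|_{p})^{r-p}\sum_{j}a_{j}^{p}\sim(\eta\|f\|_{p})^{r-p}\|f\|_{p}^{p},
\]
so that $\|b\|_{p,r}\lesssim\eta^{c_{0}}\|f\|_{p}$ with $c_{0}=1-\tfrac{p}{r}=\tfrac{q-p}{q+p}>0$. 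By Lemma \ref{lorentzbound}, this gives $\|Tb\|_{q}\le C\eta^{c_{0}}\|f\|_{p}$.

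Now combine this with the hypothesis. From $Tf=Tg+Tb$, the definition of $A$, and the previous bound,
\[
(1-\delta)A\|f\|_{p}\le\|Tf\|_{q}\le\|Tg\|_{q}+\|Tb\|_{q}\le A\|g\|_{p}+C\eta^{c_{0}}\|f\|_{p},
\]
so, dividing by $A$ and absorbing $C/A$ into $C$, we get $\|f\|_{p}-\|g\|_{p}\le s\|f\|_{p}$ with $s:=\delta+C\eta^{c_{0}}$. If $s\ge1$ the conclusion is immediate, since then $\|b\|_{p}\le\|f\|_{p}$ while $\delta^{1/p}+\eta^{c}$ is bounded below by a positive constant depending only on $d$ and $p$; otherwise $\|g\|_{p}\ge(1-s)\|f\|_{p}$, and disjointness of supports gives
\[
\|b\|_{p}^{p}=\|f\|_{p}^{p}-\|g\|_{p}^{p}\le\bigl(1-(1-s)^{p}\bigr)\|f\|_{p}^{p}\le p\,s\,\|f\|_{p}^{p},
\]
using the elementary inequality $1-(1-x)^{p}\le px$ for $x\in[0,1]$ and $p\ge1$. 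Taking $p$-th roots and the subadditivity of $t\mapsto t^{1/p}$ yields $\|b\|_{p}\le C(\delta^{1/p}+\eta^{c_{0}/p})\|f\|_{p}$, which is the lemma with $c=c_{0}/p$.

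There is no serious obstacle; what requires care is only bookkeeping. One must check that $b$, indexed over a subset of the $E_{j}$, is still a bona fide rough level set decomposition so that the Lorentz-norm estimate of the earlier section applies verbatim; that $p<q$ (hence an admissible $r\in(p,q)$ exists) for every pair on the critical segment; and that the constants produced depend only on $d$ (and, through $A=A_{p}$, on $p$). The argument is uniform for $d\ge2$ and for all $(p,q)$ as in \eqref{pvalue}, and is insensitive to whether one sits at an endpoint.
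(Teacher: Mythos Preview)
Your proof is correct and follows essentially the same approach as the paper: split $f$ into the ``small'' and ``large'' pieces, bound the small piece in a Lorentz norm $L^{p,r}$ with $r\in(p,q)$ to control $\|Tb\|_{q}$ via Lemma~\ref{lorentzbound}, and then exploit disjointness of supports together with the near-extremality hypothesis. The only differences are cosmetic: you fix $r=(p+q)/2$ explicitly, work directly without normalizing $\|f\|_{p}=1$, and handle the trivial regime $s\ge 1$ separately, whereas the paper normalizes and leaves $r$ unspecified.
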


\begin{lemma}\label{highint}
Let $d\geq 2$. There exist $c, C, \tilde{C} < \infty$ with the following property. Let $\rho\in(0,1)$.
Let $f$ be a $(1-\delta)$-quasiextremal for $T$. If 
$\delta\leq C\rho^{C}$, then there exists a function $\tilde{f}$ satisfying $\|f-\tilde{f}\|\leq C\rho^{C}$ 
with a rough level set decomposition $\tilde{f}=\sum_{j\in\mathbb{Z}}2^j f_{j}, f_j\sim\chi_{E_{j}}$ 
such that if both $\|2^{i}\chi_{E_i}\|\geq \rho$ and $\|2^{j}\chi_{E_j}\|\geq \rho$, then 
$$
|i-j|\leq \tilde{C}\rho^{-\tilde{C}}.
$$
\end{lemma}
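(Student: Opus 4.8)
The plan is to pass from $f$ to a nearby function $\tilde f$ by deleting the negligible pieces of a rough level set decomposition, and then to rule out that the surviving ``heavy'' pieces sit at widely separated dyadic scales by comparing them through the pseudo-distance on paraballs. Throughout I normalize $\|f\|_p=1$ (the general case follows by the scaling already used in the proof of Lemma \ref{entref}).

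\emph{Refinement and reduction.} First I would apply Lemma \ref{entref} with $\eta=\rho^{K}$ for a large $K$: with $\tilde f=\sum_{j\in S}2^j f_j$, $S=\{j:2^j|E_j|^{1/p}\ge\rho^{K}\}$, one gets $\|f-\tilde f\|_p\le C(\delta^{1/p}+\rho^{Kc})$, which is $\le C\rho^{c_0}$ for a $c_0$ as large as we wish once $\delta\le C\rho^{C}$ with $C$ large; moreover $|S|\le\rho^{-Kp}$, $\|\tilde f\|_p\in[1-C\rho^{c_0},1]$, and $\|T\tilde f\|_q\ge\|Tf\|_q-A\|f-\tilde f\|_p\ge(1-C\rho^{c_0})A$, so $\tilde f$ is $(1-C\rho^{c_0})$-quasiextremal; I would do the same to a dual near-extremizer $g=(T\tilde f)^{q-1}/\|T\tilde f\|_q^{q-1}$, obtaining $g=\sum_{k\in K_0}2^k g_k$ with $|K_0|\le\rho^{-C}$. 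Writing $I=\{i\in S:2^i|E_i|^{1/p}\ge\rho\}$ — or, as will be convenient, a slightly larger set $\{i:2^i|E_i|^{1/p}\ge\rho^{C}\}$, which only strengthens the conclusion — it is enough to prove $|E_i|\le C\rho^{-C}|E_j|$ for all $i,j\in I$: since $2^i|E_i|^{1/p},2^j|E_j|^{1/p}\in[\rho^C,1]$, one then gets $|i-j|\le C|\log\rho|+C\le\tilde C\rho^{-\tilde C}$.

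\emph{Heavy levels are quasiextremal and carry paraballs.} Next I would prune the ``reluctant'' indices $\mathcal R=\{i\in S:\|T(2^i f_i)\|_q<\rho^{K'}A\|2^i f_i\|_p\}$ ($K'$ large): since $\|T\sum_{i\in\mathcal R}2^i f_i\|_q\le|\mathcal R|\rho^{K'}A\le\rho^{K'-Kp}A$, the argument of Lemma \ref{quasi} shows $\|\sum_{i\in\mathcal R}2^i f_i\|_p\le C\rho^{c_0/p}$, so after replacing $\tilde f$ by $\tilde f-\sum_{i\in\mathcal R}2^i f_i$ (which changes $\|f-\tilde f\|_p$ and the quasiextremality constant only by $C\rho^{c}$) every $i\in S$ makes $2^i f_i$ a $\rho^{K'}$-quasiextremal. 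A pigeonhole over $K_0$ applied to $\langle T\tilde f,g\rangle\gtrsim A$ and to each $\langle T(2^i f_i),g\rangle$ — the latter being $\gtrsim\rho^{C}\|2^i f_i\|_p$, since otherwise deleting $2^i f_i$ leaves a function of $L^p$ norm $(\|\tilde f\|_p^p-\|2^i f_i\|_p^p)^{1/p}$ whose $T$-image is still almost maximal, contradicting $2^i f_i$ being heavy once $c_0$ is large — then produces for each $i\in I$ a significant level $F_{k_i}$ of $g$ with $(E_i,F_{k_i})$ a $\rho^{C}$-quasiextremal pair. By Theorem \ref{ballandquasi} there is a paraball $B_i$ with $|B_i|\le|E_i|$, $|E_i\cap B_i|\ge c\rho^{C}|E_i|$ (so $|B_i|\sim_\rho|E_i|$), and $|F_{k_i}\cap B_i^*|\ge c\rho^C|F_{k_i}|$; using Lemma \ref{compa} all the $B_i$ may be taken of a single type, so $d(B_i,B_j)$ is defined.

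\emph{Closeness of the heavy paraballs, and the main obstacle.} Finally I would show $d(B_i,B_j)\le C\rho^{-C}$ for all $i,j\in I$; since a bounded pseudo-distance forces the ratios of the cross-sectional volumes and of the bases to be bounded, this gives $|B_i|\sim_\rho|B_j|$, hence $|E_i|\sim_\rho|E_j|$, which by the reduction completes the proof. Suppose instead $d(B_i,B_j)$ is large. Taking $\lambda>0$ so that $F=\{T\tilde f>\lambda\}$ has finite measure and $\int_F(T\tilde f)^q\ge(1-\rho^{100})\|T\tilde f\|_q^q$, Lemma \ref{orthogonal} applied to $\{B_i,B_j\}$ (with parameter $\eta_0$, allowed as small as we like since $d(B_i,B_j)$ is large) partitions $F=F_1\sqcup F_2$ with $T(B_i,F_2)$ and $T(B_j,F_1)$ correspondingly small; standard estimates for $T\chi_B$ when $B$ is a paraball turn this into near-orthogonality, in $L^q$, of $T(2^i f_i\chi_{B_i})$ and $T(2^j f_j\chi_{B_j})$. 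Feeding this into an estimate of $\|T\tilde f\|_q^q=\int_{F_1}+\int_{F_2}$ and using the quantitative strict convexity $(a^p+b^p)^{q/p}-a^q-b^q\gtrsim(\min(a,b)/\max(a,b))^p(a^p+b^p)^{q/p}$ together with $\|2^i f_i\chi_{B_i}\|_p,\|2^j f_j\chi_{B_j}\|_p\ge c\rho^{C}$ should force $\|T\tilde f\|_q\le(1-c\rho^{C})A\|\tilde f\|_p$, contradicting $(1-C\rho^{c_0})$-quasiextremality once $c_0>C$ was arranged in the first step. The genuinely delicate point is precisely this last estimate: a heavy level is not literally a paraball but only captures a $\rho^{-C}$-fraction of one, so one works with the truncations $2^i f_i\chi_{B_i}$ while tracking the $L^p$ mass retained; $\mathbb{R}^d$ has infinite measure, so one must cut down to a superlevel set of $T\tilde f$; and, most seriously, the bulk of $\tilde f$ may live near neither $B_i$ nor $B_j$, so the two far pieces do not by themselves decompose $\tilde f$ — one must first organize the remaining levels according to which of $B_i,B_j$ their paraballs are close to, using the almost-triangle inequality (Lemma \ref{triangle}) and the covering property, until $\tilde f$ is genuinely split into two $T$-orthogonal halves, at which point the convexity estimate bites. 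This bootstrapping is the crux of Christ's method in \cite{MC5}, and carrying it out in the paraball geometry of the moment curve is where the real work lies; the ancillary facts — that a piece with negligible $T$-image carries negligible $L^p$ mass, and that widely separated dyadic scales blow up the leading term of the pseudo-distance — are routine.
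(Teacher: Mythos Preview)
Your route diverges from the paper's at the decisive step, and the divergence is exactly where you flag the ``genuinely delicate point.'' The paper does \emph{not} attach paraballs to heavy levels, does \emph{not} invoke Lemma~\ref{orthogonal}, and does \emph{not} attempt to split $\tilde f$ into two $T$-orthogonal halves. Instead, after the same entropy refinement and the same Hölder/convexity step (which, as you say, forces a cross term $\langle Tf^{a},g^{b}\rangle\ge c\rho^{C}$ for some partition $S=S^{a}\cup S^{b}$ across a gap of size $\ge M/N$), the paper exploits that cross term in the \emph{opposite} direction: it produces $j\in S^{a}$, $i\in S^{b}$ and a common set $F\subset F_{k}$ on which \emph{both} $T\chi_{E_{i}}$ and $T\chi_{E_{j}}$ are pointwise $\gtrsim\rho^{C}|E_{\cdot}|^{1/p}|F|^{-1/q}$, and then applies Lemma~\ref{extrapolation} directly to conclude $|E_{i}|\le C\rho^{-C}|E_{j}|$ and vice versa. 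From $2^{i}|E_{i}|^{1/p},2^{j}|E_{j}|^{1/p}\in[\rho,1]$ one gets $|i-j|\lesssim\log(1/\rho)$, hence $M\le N\cdot C\log(1/\rho)\le C\rho^{-C}$.

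So the paper trades your orthogonality-plus-convexity contradiction for an \emph{interaction} argument: a large cross term means two far-apart levels share a region where both are felt, and the extrapolation lemma turns that shared region into a size comparison without ever needing to organize the remaining levels around $B_{i}$ or $B_{j}$. Your obstacle---that the bulk of $\tilde f$ need not live near either $B_{i}$ or $B_{j}$---simply does not arise. The paraball/pseudo-distance machinery you reach for (Theorem~\ref{ballandquasi}, Lemma~\ref{orthogonal}, Lemma~\ref{triangle}) is deployed by the paper only later, in Lemma~\ref{spatial}, where the goal is genuinely spatial; for the present height-localization lemma the extrapolation route is both shorter and avoids the bootstrapping you correctly identify as delicate. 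Your sketch may be completable along the lines you indicate, but the appeal to ``Christ's method in \cite{MC5}'' for this particular lemma is misplaced: the paper states that its proof here follows \cite{MC5} almost identically, and that proof is the extrapolation one.
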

\begin{proof}
This lemma is an improvement over the previous lemma, in the sense that the indices
$\{j\}$ in the sum for which $\|2^{j}\chi_{E_{j}}\|_{p}\geq \eta$, can not be too far from each other.
All of them are inside an interval of $\mathbb{Z}$ of length at most $C\,\eta^{-C}$. 
This lemma is proved corresponding to the paraboloid in Lemma $6.1$ in \cite{MC5}. The proof 
of this lemma will be an application of Lemma \ref{extrapolation} together with the previous lemma.The proof
is almost identical in our case. Therefore we omit the proof.
\end{proof}

Following Corollary $6.3$ in \cite{MC5} the above lemma immediately implies the following.
\begin{corollary}\label{decay}
There exist a finite constant $C$ and a function $\Psi:(0,\infty)\rightarrow (0,\infty)$
satisfying $\frac{\Psi(t)}{t^p}\rightarrow 0$ as $t\rightarrow 0,\infty$ with the following property.
For any $\epsilon> 0$ there exists a $\delta> 0$ such that for any nonnegative 
function $f$ with $\|f\|_{p}=1$ and $\|T(f)\|_{q}\geq (1-\delta)A$, there exists $\phi\in G_{d}$
and a decomposition $\phi^{*}(f)=g+h$ with $g,h \geq 0$ satisfying $\|h\|_{p}<\epsilon$ and 
\begin{equation}
\int \Psi(g)\leq C.
\end{equation}
\end{corollary}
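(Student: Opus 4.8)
The plan is to read the corollary off from the entropy refinement (Lemma~\ref{entref}) and the weak higher integrability estimate (Lemma~\ref{highint}), once a suitable profile $\Psi$ is fixed and one normalizes by a scaling symmetry. Write $\Psi(t)=t^{p}\omega(t)$; the requirement $\Psi(t)/t^{p}\to 0$ as $t\to 0$ and as $t\to\infty$ is exactly $\omega(t)\to 0$ at both ends, and I would pin $\omega$ down by the one further demand
\[
M:=\sum_{k\in\ZZ}\ \sup_{2^{k}\le t\le 2^{k+1}}\omega(t)\ <\ \infty,
\]
for instance $\omega(t)=(2+|\log_{2}t|)^{-2}$. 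The role of $M<\infty$ is that a near extremizer lives, up to a small $L^{p}$ error, on a band of dyadic scales whose length Lemma~\ref{highint} controls by $\rho^{-C}$ but which degrades as the function approaches extremality; summability of $\omega$ over dyadic blocks makes the final bound blind to the length and the location of that band, hence uniform in the function.

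Given $\epsilon>0$, choose $\rho>0$, then $\delta\le C\rho^{C}$, small enough that Lemmas~\ref{entref} and~\ref{highint} apply and $C(\delta^{1/p}+\rho^{c})<\epsilon$. Let $f\ge 0$, $\|f\|_{p}=1$, $\|Tf\|_{q}\ge(1-\delta)A$, so that $f$ is $(1-\delta)$-quasiextremal; fix a rough level set decomposition $f=\sum_{j}2^{j}f_{j}$, $f_{j}\sim\chi_{E_{j}}$. By Lemma~\ref{entref} the subsum of $f$ over the indices $j$ with $2^{j}|E_{j}|^{1/p}<\rho$ has $L^{p}$ norm at most $C(\delta^{1/p}+\rho^{c})$; write $f=g_{0}+h_{0}$ accordingly, with $g_{0}=\sum_{j\in S}2^{j}f_{j}$, $S=\{j:2^{j}|E_{j}|^{1/p}\ge\rho\}$, $|S|\le\rho^{-p}$, $g_{0},h_{0}\ge 0$, $g_{0}\le f$, and $\|h_{0}\|_{p}<\epsilon$. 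By Lemma~\ref{highint} (discarding a further piece of norm at most $C\rho^{c}$ into $h_{0}$) we may also assume $|i-j|\le\tilde C\rho^{-\tilde C}=:W$ for all $i,j\in S$. Since $g_{0}\le f$ we get $\sum_{j\in S}2^{jp}|E_{j}|\le\|f\|_{p}^{p}=1$, hence $|E_{j}|\le 2^{-jp}$ for every $j\in S$.

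Next, let $j_{*}=\max S$ and let $\phi=S_{r}\in G_{d}$ be the scaling for which $\phi^{*}$ shifts every dyadic level index down by the integer $j_{*}$: a scaling multiplies all coefficients $2^{j}$ by a common factor $2^{-j_{*}}$ and scales all the sets $E_{j}$ by the reciprocal factor in measure, which leaves each product $2^{jp}|E_{j}|$ and the $L^{p}$ norm unchanged. Put $g=\phi^{*}(g_{0})$ and $h=\phi^{*}(h_{0})$, so $\phi^{*}(f)=g+h$, $g,h\ge 0$, $\|h\|_{p}=\|h_{0}\|_{p}<\epsilon$, and $g=\sum_{j\in S'}2^{j}\tilde f_{j}$, $\tilde f_{j}\sim\chi_{E_{j}'}$, with $S'\subset[-W,0]$ and still $2^{jp}|E_{j}'|\le 1$. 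On $E_{j}'$ one has $2^{j}\le 2^{j}\tilde f_{j}\le 2^{j+1}$, so $\Psi(2^{j}\tilde f_{j})\le 2^{(j+1)p}\omega_{j}$ with $\omega_{j}:=\sup_{2^{j}\le t\le 2^{j+1}}\omega(t)$, and therefore
\[
\int\Psi(g)=\sum_{j\in S'}\int_{E_{j}'}\Psi(2^{j}\tilde f_{j})\ \le\ \sum_{j\in S'}2^{(j+1)p}\omega_{j}|E_{j}'|\ \le\ 2^{p}\sum_{j\in S'}\omega_{j}\ \le\ 2^{p}\sum_{k\in\ZZ}\omega_{k}\ =\ 2^{p}M\ =:\ C,
\]
a constant depending only on $p$. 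This gives the asserted decomposition.

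The one substantive point --- the place where one must calibrate rather than merely assemble previous lemmas --- is exactly the choice of $\Psi$ made in the first paragraph: the width $W$ of the dyadic band produced by Lemma~\ref{highint} diverges as $\epsilon\to 0$, so $\Psi$ may not have $\Psi(t)/t^{p}$ non summable over dyadic blocks; once $\sum_{k}\omega_{k}<\infty$ the estimate is immediate and uniform. The normalizing scaling $\phi$ is not logically needed for the displayed bound (it holds verbatim with $S'$ replaced by $S$), but it is what makes $g$ genuinely supported on bounded dyadic scales, which is the form in which the corollary is convenient to use in the sequel.
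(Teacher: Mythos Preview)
Your argument is formally correct for the statement exactly as printed, but you should pause here: with your choice $\omega(t)=(2+|\log_{2}t|)^{-2}$ one has $\Psi(t)=t^{p}\omega(t)\le\tfrac14 t^{p}$ for every $t>0$, so $\int\Psi(g)\le\tfrac14\|g\|_{p}^{p}$ for \emph{any} $g$ whatsoever. The near-extremality hypothesis, the symmetry $\phi$, the truncation to $S$, and Lemmas~\ref{entref}--\ref{highint} are all superfluous; one may take $g=f$, $h=0$, $\phi=\mathrm{id}$. That the corollary becomes content-free should have been a red flag. The section is titled ``Weak higher integrability'', the paper's own proof explicitly constructs a \emph{nondecreasing} $\Psi$ subject to the growth constraint $\sum_{k\ge 0}2^{-ck}\max_{|j|\le C2^{Ck}}\Psi(2^{j+1})/2^{p(j+1)}<\infty$, and the downstream application (uniform integrability in the existence argument of Section~\ref{existence}) needs $\Psi(t)/t^{p}\to\infty$. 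The printed ``$\to 0$'' is almost certainly a typo for ``$\to\infty$'' (compare Corollary~6.3 of \cite{MC5}, on which this is modelled).

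For the intended statement with $\omega=\Psi(t)/t^{p}\to\infty$, your single-scale argument genuinely fails. You invoke Lemmas~\ref{entref} and~\ref{highint} once, at a threshold $\rho=\rho(\epsilon)$, to confine $S'$ to an interval of width $W=\tilde C\rho^{-\tilde C}$, and then bound $\sum_{j\in S'}\omega_{j}\cdot 2^{jp}|E_{j}'|$ either by $\sum_{|j|\le W}\omega_{j}$ (your route) or by $\max_{|j|\le W}\omega_{j}$; both diverge as $\epsilon\to 0$ whenever $\omega\to\infty$, so the constant is not uniform in $f$. The paper avoids this by running the two lemmas at \emph{every} dyadic scale $2^{-k}$ simultaneously. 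After normalizing so that the index $j=0$ carries mass $\ge c_{0}$, one stratifies $S=\bigcup_{k\ge 0}S_{k}$ with $S_{k}=\{j:\ \|2^{j}\chi_{E_{j}}\|_{p}\sim 2^{-k}\}$. Lemma~\ref{entref} at scale $2^{-k}$ gives $\sum_{j\in S_{k}}2^{jp}|E_{j}|\lesssim\delta+2^{-ck}$, while Lemma~\ref{highint} at scale $2^{-k}$ forces $S_{k}\subset\{|j|\le C2^{Ck}\}$. Hence the layer $S_{k}$ contributes at most $(\delta+2^{-ck})\max_{|j|\le C2^{Ck}}\omega(2^{j})$ to $\int\Psi(g)$; one then chooses $\omega\to\infty$ slowly enough that $\sum_{k}2^{-ck}\max_{|j|\le C2^{Ck}}\omega(2^{j})<\infty$, and finally makes the residual $\delta$-sum $\le 1$ by taking $\delta$ small as a function of $\eta$ (hence of $\epsilon$). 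The essential idea you are missing is this scale-by-scale trade-off between the decaying mass $2^{-ck}$ of layer $S_{k}$ and its growing index range $C2^{Ck}$; a single truncation cannot reproduce it.
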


\smallskip

\section{Uniform decay and Extremizers at non-end points}

In this section we show that any extremizing sequence behaves in a uniform manner. Using this we prove that at non end points
any extremizing sequence, after applying symmetries if necessary, converges to a non zero function in $L^{p}$. By continuity, the limit must be an
extremizer for the corresponding $L^{p}$ bound.

\subsection{Spatial Localization}
\begin{lemma}
There exists $C< \infty$ such that for any $\epsilon> 0$ there exists $\delta> 0$ with the
following property. Let $f$ be a nonnegative function with $\|f\|_{p}=1$ and $\|T(f)\|_{q}\geq (1-\delta) A$.
Then there exists $F$ with rough level set decomposition $F=\sum_{j\in S}2^{j}f_j, f_j\sim \chi_{E_j}$ satisfying
$$
0\leq F\leq f,
$$
$$
\|T(F)\|_{q}\geq (1-\epsilon) A,
$$
$$
|i-j|\leq  C \epsilon^{-C} \quad\text{for all}\quad i, j\in S
$$ 
and for each $j\in S$ there exist $N(\sim C\epsilon^{-C})$ paraballs $B_{j,i}$ such that
$$
E_{j}\subset \bigcup_{i=1}^N B_{j,i};
$$
$$
\sum_{i=1}^N |B_{j,i}|\leq C \epsilon^{-C} |E_{j}|.
$$
\end{lemma}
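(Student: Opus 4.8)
The plan is to first localize $f$ in the level variable, then run a greedy exhaustion that peels off one paraball at a time.

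\textit{Level localization.} I would start by noting that, since $\|f\|_{p}=1$ and $\|Tf\|_{q}\ge(1-\delta)A$, $f$ is a $(1-\delta)$-quasiextremal. Choosing $\rho$ to be a small fixed power of $\epsilon$ and $\delta\le C\rho^{C}$, Lemma \ref{highint} yields a function $F_{1}$ with $0\le F_{1}\le f$, $\|f-F_{1}\|_{p}\le C\rho^{C}$, and a rough level set decomposition $F_{1}=\sum_{j\in S_{1}}2^{j}f_{j}$, $f_{j}\sim\chi_{E_{j}}$, such that $|S_{1}|\le C\rho^{-C}$, $|i-j|\le C\rho^{-C}$ for all $i,j\in S_{1}$, and $\|2^{j}\chi_{E_{j}}\|_{p}=2^{j}|E_{j}|^{1/p}\ge\rho$ for every $j\in S_{1}$. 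Boundedness of $T$ then gives $\|TF_{1}\|_{q}\ge\|Tf\|_{q}-A\|f-F_{1}\|_{p}\ge(1-\tfrac{\epsilon}{4})A$ as soon as $\rho,\delta$ are small enough. Since the index set produced below will be a subset of $S_{1}$, this already secures the spacing bound $|i-j|\le C\epsilon^{-C}$.

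\textit{Exhaustion.} Let $C$ denote the exponent from Lemma \ref{quasi}, set $\epsilon_{0}=\tfrac{\epsilon A}{4}$ and $M=\bigl\lceil p(c\epsilon_{0}^{C})^{-p}\log(1/\epsilon)\bigr\rceil$, and put $F^{(0)}=F_{1}$. Given $F^{(k-1)}=\sum_{j\in S_{1}}2^{j}f_{j}\chi_{E_{j}^{(k-1)}}$ (with $E_{j}^{(0)}=E_{j}$): if $k-1<M$ and $F^{(k-1)}$ is $\epsilon_{0}$-quasiextremal, I apply Lemma \ref{quasi} to get an index $j_{k}\in S_{1}$ and a paraball $B^{(k)}$ with $|B^{(k)}|\le|E_{j_{k}}^{(k-1)}|$ and $\|2^{j_{k}}\chi_{E_{j_{k}}^{(k-1)}\cap B^{(k)}}\|_{p}\ge c\epsilon_{0}^{C}\|F^{(k-1)}\|_{p}$; I replace $E_{j_{k}}^{(k-1)}$ by $E_{j_{k}}^{(k)}=E_{j_{k}}^{(k-1)}\setminus B^{(k)}$ (leaving the other level sets alone) and set $F^{(k)}=F^{(k-1)}-2^{j_{k}}f_{j_{k}}\chi_{B^{(k)}}$; otherwise I stop. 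Writing $\ell$ for the number of steps performed, $S=\{j_{1},\dots,j_{\ell}\}\subseteq S_{1}$, I then take $F=F_{1}-F^{(\ell)}=\sum_{j\in S}2^{j}f_{j}\chi_{\widetilde E_{j}}$ with $\widetilde E_{j}:=E_{j}\cap\bigcup_{k:\,j_{k}=j}B^{(k)}$, which is a valid rough level set decomposition with $0\le F\le F_{1}\le f$. Each extraction satisfies $\|F^{(k)}\|_{p}^{p}\le(1-(c\epsilon_{0}^{C})^{p})\|F^{(k-1)}\|_{p}^{p}$, so $\ell\le M$; and either the process stopped because $F^{(\ell)}$ is no longer $\epsilon_{0}$-quasiextremal, so $\|TF^{(\ell)}\|_{q}<\epsilon_{0}\|F^{(\ell)}\|_{p}\le\epsilon_{0}$, or $\ell=M$, in which case the choice of $M$ forces $\|F^{(M)}\|_{p}\le\epsilon$ and hence $\|TF^{(M)}\|_{q}\le A\epsilon$. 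Either way $\|TF\|_{q}\ge\|TF_{1}\|_{q}-\|TF^{(\ell)}\|_{q}\ge(1-2\epsilon)A$, the asserted lower bound (after relabeling). With $\{B_{j,i}\}_{i=1}^{N_{j}}:=\{B^{(k)}:\,j_{k}=j\}$ one has $N_{j}\le\ell\le M\le C\epsilon^{-C}$ and $\widetilde E_{j}\subseteq\bigcup_{i}B_{j,i}$ by construction.

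\textit{Volume comparison, and the main difficulty.} Finally I would check $\sum_{i}|B_{j,i}|\le C\epsilon^{-C}|\widetilde E_{j}|$. From $|B^{(k)}|\le|E_{j_{k}}^{(k-1)}|\le|E_{j_{k}}|$ one gets $\sum_{i}|B_{j,i}|\le M|E_{j}|$. Letting $k_{0}$ be the first step with $j_{k_{0}}=j$, one has $E_{j}^{(k_{0}-1)}=E_{j}$, hence $\|F^{(k_{0}-1)}\|_{p}\ge\|2^{j}\chi_{E_{j}}\|_{p}\ge\rho$, so $|\widetilde E_{j}|\ge|E_{j}\cap B^{(k_{0})}|\ge(c\epsilon_{0}^{C}\rho\,2^{-j})^{p}$; since $2^{j}|E_{j}|^{1/p}\le C\|f\|_{p,\infty}\le C$ gives $|E_{j}|\le C2^{-jp}$, this yields $|\widetilde E_{j}|\ge c(\epsilon_{0}^{C}\rho)^{p}|E_{j}|$ and therefore $\sum_{i}|B_{j,i}|\le M|E_{j}|\le CM(\epsilon_{0}^{C}\rho)^{-p}|\widetilde E_{j}|\le C\epsilon^{-C}|\widetilde E_{j}|$; renaming $\widetilde E_{j}$ as $E_{j}$ finishes the argument. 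The hard part is arranging the exhaustion so that the retained piece $F$ keeps $\|TF\|_{q}$ within $\epsilon$ of $A$ while still terminating after $\lesssim\epsilon^{-C}$ steps and producing, for each level, a covering of total measure comparable to that level's measure; this is exactly what the two-part stopping rule (stop as soon as $F^{(\ell)}$ fails to be $\epsilon_{0}$-quasiextremal, so that $TF^{(\ell)}$ is harmless, or once the geometric $L^{p}$-decay has pushed $\|F^{(\ell)}\|_{p}$ below $\epsilon$) and the elementary observation $\|F^{(k_{0}-1)}\|_{p}\ge\|2^{j}\chi_{E_{j}}\|_{p}\ge\rho$ are designed to handle.
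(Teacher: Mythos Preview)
Your proof is correct and follows essentially the same approach as the paper: first apply Lemma~\ref{highint} to reduce to finitely many levels within a bounded index window, then run a greedy exhaustion via Lemma~\ref{quasi}, peeling off one paraball at a time until the remainder is harmless. The paper uses the single stopping rule $\|T(h_{n-1})\|_q<\eta$ and observes that the disjoint extracted pieces each have $L^p$ norm $\ge c\eta^{C}$, forcing termination in $\le C\eta^{-C}$ steps; your two-part rule (quasiextremality fails, or a step cap $M$ is reached) achieves the same end with a geometric-decay argument. You are in fact more careful than the paper about the volume comparison $\sum_i|B_{j,i}|\le C\epsilon^{-C}|\widetilde E_j|$, which the paper leaves implicit; your use of the first extraction at level $j$ together with $2^j|E_j|^{1/p}\in[\rho,C]$ to pin down $|\widetilde E_j|\gtrsim (\epsilon_0^{C}\rho)^p|E_j|$ is exactly the needed ingredient.
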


\begin{proof} 
Let $\epsilon>0$. Fix  $\delta>0$ sufficiently small for later purposes, and suppose
$\|f\|_p=1$, $\|Tf\|_{q}\geq (1-\delta)A$. By using Lemma \ref{highint}, by losing $\epsilon$ amount of
$L^{p}$ norm we can assume that $f$ has a finite level set decomposition. 
In other words, $f=\sum_{S} 2^{j}f_j$ with $S\subset (J-C\epsilon^{-C}, J+C\epsilon^{-C})\cap\mathbb{Z}$ for some $J\in\mathbb{Z}$.
Let $0< \eta\,(\leq c\epsilon^C)$ be a small quantity to be chosen later. Then $\|T(f)\|_{q}\geq (1-2\delta)\,A\geq\eta$.
Now we apply Lemma \ref{quasi} to $f$ to get a paraball $B_{1}$ and $i_{1}\in S$ such that
$\|2^{i_{1}}\chi_{E_{i_{1}}\cap B_{1}}\|_{p}\geq c\eta^{C}$ and $|B_{1}|\leq|E_{i_{1}}|$.

At the next step we  set $g_{1}=f\chi_{E_{i_{1}}\cap B_{1}}$ and write $f=g_{1}+h_{1}$. So
$h_{1}=\sum_{j\neq _{i_{1}}}f(\chi_{E_{j}}+\chi_{E_{i_{1}}\setminus B_{1}})$. Now we look
at $\|T(h_{1})\|_{q}$. If $\|T(h_{1})\|_{q}\geq \eta$ then by applying Lemma \ref{quasi} to $h_{1}$ we get 
another paraball $B_{2}$ and $i_{2}\in\mathbb{Z}$ such that
$$
\|2^{i_{2}}\chi_{E_{i_{2}}\cap B_{2}\setminus\big(E_{i_{1}}\cap B_{1}\big)}\|\geq c\eta^{C}\quad 
\text{and}\quad |B_{2}|\leq|E_{i_{2}}|.
$$
Now define $g_{2}=f\chi_{E_{i_{2}}\cap B_{2}\setminus\big(E_{i_{1}}\cap B_{1}\big)}$ 
and $f=g_{1}+g_{2}+h_{2}$.

We continue this process. Now suppose we are at the $(n-1)$-th step. So we have
a collection of paraballs $\{B_{j}\}_{1\leq j\leq n-1}$ and indices $\{i_{j}\}_{1\leq j\leq n-1}$
such that
\begin{itemize}
\item $|B_{j}|\leq |E_{i_{j}}|$;
\item $g_{m}=f\chi_{E_{i_{m}}\cap B_{m}\setminus\cup_{1\leq j\leq m-1}(E_{i_{j}}\cap B_{j})}$;
\item $\|g_{m}\|_{p}\geq c\,\eta^{C}$;
\item $f=\sum_{j=1}^{n-1} g_{j}+h_{n-1}$.
\end{itemize}
Now if $\|T(h_{n-1})\|_{q}<\, \eta$ we stop. Otherwise after applying Lemma \ref{quasi}
one more time we get another paraball $B_{n}$ and $i_{n}\in\mathbb{Z}$ such that 
$\|2^{i_{n}}\chi_{E_{i_{n}}\cap B_{n}}\|_{p}\geq c\eta^{C}$ and $|B_{n}|\leq |E_{i_{n}}|$.

Since the $g_{j}$ have disjoint support, $\sum_{1\leq j\leq n} \|g_{j}\|_{p}^{p}\leq \|f\|_{p}^{p}\leq 1$.
So this process must stop after at most $C\eta^{-C}$ steps. Let the process stops at the $n$-th
step. Then we define $F=\sum_{1\leq j\leq n} g_{j}$, so that $\|T(f-F)\|_{q}< \,\eta$.
This means $\|T(F)\|_{q}\geq (1-2\delta-\eta) A\geq (1-\epsilon) A$ provided
$\eta$ is sufficiently small compared to $\epsilon$. At the same time since $\|T(F)\|_{q}\leq A\|F\|_{p}$,
we have $\|F\|_{p}\geq 1-\epsilon$. This implies $\|f-F\|_{p}\leq \epsilon$. Now we set
$\{B_{j,i}\}=\{B_{i}: i_{j}=j\}$.
\end{proof}

\begin{lemma}\label{spatial}
There exists $C< \infty$ such that for any $\epsilon> 0$ there exists 
$\delta> 0$ with the following property. Let $f$ be a nonnegative function 
with $\|f\|_{p}=1$ and $\|T(f)\|_{q}\geq (1-\delta)\,A$. Then there exists $\tilde{f}$ 
with rough level set decomposition $\tilde{f}=\sum_{j\in S}2^{j}f_j$, $f_j \sim\chi_{E_j}$ satisfying
$$
0\leq \tilde{f}\leq f,
$$
$$
\|\tilde{f}\|_{p}\geq (1-\epsilon),
$$
$$
\|T(\tilde{f})\|_{q}\geq (1-\epsilon)\,A
$$
and there exists a distinguished $J\in S$ and a paraball $B_J$ such that 
$$
|J-j|\leq C \epsilon^{-C}\quad\text{for all}\quad j\in S,
$$ 
$$
E_{j}\subset C\epsilon^{-C^{C\epsilon^{-C}}} B_J\quad\text{for all}\quad j\in S,
$$
$$
\|2^{J}\chi_{{B}_J}\|_{p}\leq C.
$$
\end{lemma}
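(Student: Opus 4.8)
The plan is to upgrade the preceding lemma, which already supplies a level-set decomposition $F=\sum_{j\in S}2^jf_j$, $f_j\sim\chi_{E_j}$, with $0\le F\le f$, $\|TF\|_q\ge(1-\epsilon')A$, all indices of $S$ within $C(\epsilon')^{-C}$ of one another, and for each $j$ a family of at most $N':=C(\epsilon')^{-C}$ paraballs $\{B_{j,i}\}$ with $E_j\subset\bigcup_iB_{j,i}$ and $|B_{j,i}|\le|E_j|$ (the individual bound being read off from its proof, which applies Lemma~\ref{quasi}). Here $\epsilon'$ will be a fixed small power of $\epsilon$, $\eta$ a small power of $\epsilon'$, and $\delta$ chosen last. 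First I would split the $E_j$ into disjoint pieces $E_{j,i}=E_j\cap B_{j,i}\setminus\bigcup_{i'<i}B_{j,i'}$, so that $F=\sum_{(j,i)}2^jf_j\chi_{E_{j,i}}$ with every summand supported in $B_{j,i}$; and I would produce a near-dual $g=\sum_{k\in\tilde S}2^kg_k$, $g_k\sim\chi_{G_k}$, $\|g\|_{q'}=1$, $\langle TF,g\rangle\ge(1-2\epsilon')A$, with $|\tilde S|\le C(\epsilon')^{-C}$: take $g_0=(TF)^{q-1}/\|TF\|_q^{q-1}$, observe it is a $(1-\epsilon')$-quasiextremizer for the adjoint $T^*$ (convolution with the affine arclength measure of $-h$, hence of the same type), and apply the entropy refinement Lemma~\ref{entref} to discard the level sets contributing below a power of $\epsilon'$.

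Next I would cluster the finite family $\{B_{j,i}\}$ (at most $(N')^2$ paraballs) via the graph in which two paraballs are joined when $d(\cdot,\cdot)\le K:=C\eta^{-C}$. Iterating the almost-triangle inequality (Lemma~\ref{triangle}) along chains of length at most $(N')^2$, any two paraballs in the same cluster are at pseudo-distance $\le D:=K^{C^{C(N')^2}}$, while paraballs in distinct clusters are at pseudo-distance $>K$. The crux is that this is done with the original small paraballs rather than after merging each cluster into one large paraball: a common dilation of two paraballs leaves the size and shape terms of $d$ unchanged but shrinks all the position-type terms, so merging first could annihilate the between-cluster separation, whereas with the small paraballs the inequality $d>K$ between clusters remains available. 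Now I would invoke Lemma~\ref{orthogonal} — more precisely the argument of its proof, which for a fixed pair only uses that this pair is separated — to partition, for each $k$, the set $G_k=\bigsqcup_{(j',i')}G_{k,(j',i')}$ so that $T(B_{j,i},G_{k,(j',i')})\le\eta|B_{j,i}|^{1/p}|G_{k,(j',i')}|^{1/q'}$ whenever $(j,i)$ and $(j',i')$ lie in different clusters. Setting $F^{(c)}=\sum_{(j,i)\in c}2^jf_j\chi_{E_{j,i}}$ and $g^{(c)}=\sum_k2^kg_k\chi_{\bigcup_{(j,i)\in c}G_{k,(j,i)}}$ (disjoint supports as $c$ varies), and using $E_{j,i}\subset B_{j,i}$, $2^j|B_{j,i}|^{1/p}\le2^j|E_j|^{1/p}\le1$, and $\sum_k2^k|G_{k,(j',i')}|^{1/q'}\le|\tilde S|^{1/q}\|g\|_{q'}\le C(\epsilon')^{-C}$, routine bookkeeping over the at most $(N')^2$ indices gives $\langle TF^{(c)},g^{(c')}\rangle\le C(\epsilon')^{-C}\eta$ for $c\ne c'$.

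The rest is soft. With $a_c=\|F^{(c)}\|_p$ and $u_c=\|g^{(c)}\|_{q'}$, so $\sum_ca_c^p\le1$ and $\sum_cu_c^{q'}\le1$,
\[
(1-2\epsilon')A\le\langle TF,g\rangle\le\sum_c\langle TF^{(c)},g^{(c)}\rangle+C(\epsilon')^{-C}\eta\le A\sum_ca_cu_c+C(\epsilon')^{-C}\eta,
\]
and by H\"older $\sum_ca_cu_c\le\big(\sum_ca_c^q\big)^{1/q}\le(\max_ca_c)^{(q-p)/q}$ because $q>p$; choosing $\eta$ a small power of $\epsilon'$ with $C(\epsilon')^{-C}\eta\le\epsilon'A$ forces $\max_ca_c\ge1-C\epsilon'$. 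Let $c_0$ realize the maximum and set $\tilde f=F^{(c_0)}=\sum_{j\in S'}2^jf_j\chi_{\tilde E_j}$, where $\tilde E_j=\bigsqcup_{i:(j,i)\in c_0}E_{j,i}$ and $S'=\{j:(j,i)\in c_0\text{ for some }i\}\subset S$. Then $0\le\tilde f\le f$, $\|\tilde f\|_p=a_{c_0}\ge1-C\epsilon'$, and $\|T\tilde f\|_q\ge\|TF\|_q-A\|F-\tilde f\|_p\ge(1-C(\epsilon')^{1/p})A$ since $\|F-\tilde f\|_p^p=\|F\|_p^p-\|\tilde f\|_p^p\le C\epsilon'$. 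Fixing any $(J,i_J)\in c_0$ and $B_J=B_{J,i_J}$: from $|B_J|\le|E_J|$ we get $\|2^J\chi_{B_J}\|_p=2^J|B_J|^{1/p}\le\|2^Jf_J\|_p\le\|f\|_p=1\le C$; and for $j\in S'$ every $B_{j,i}$ with $(j,i)\in c_0$ lies within pseudo-distance $\le D$ of $B_J$, so by the covering lemma $B_{j,i}\subset CD^CB_J$, hence $\tilde E_j\subset CD^CB_J$. Tracing the choices $N'\to K\to D$, the constant $CD^C$ has the form $C\epsilon^{-C^{C\epsilon^{-C}}}$ once $\epsilon'$ and $\eta$ are fixed as suitable powers of $\epsilon$; finally one takes $\epsilon'$ a small enough power of $\epsilon$ so that every loss that is a power of $\epsilon'$ falls below $\epsilon$, and then $\delta$ small enough that the preceding lemma applies with parameter $\epsilon'$.

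The hardest part is the second paragraph: arranging the clustering so that the between-cluster separation genuinely survives (hence clustering the small paraballs and not merging first), checking that the proof of Lemma~\ref{orthogonal} does deliver the stated pairwise estimate for cross-cluster pairs, and then organizing the $\epsilon$-dependently many cross terms so that all of them are dominated by the single small quantity $C(\epsilon')^{-C}\eta$; the arithmetic of the nested parameters $\delta\ll\eta\ll\epsilon'\ll\epsilon$ must be threaded consistently, and it is exactly this chain that manufactures the tower-type constant appearing in the conclusion.
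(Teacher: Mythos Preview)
Your argument is correct and follows essentially the same route as the paper: start from the preceding lemma's finite family of paraballs, form the graph with edges at pseudo-distance $\le C\eta^{-C}$, use the strict convexity $p<q$ together with the quasi-orthogonality from Lemma~\ref{orthogonal} to control cross terms, and then invoke the quasi-triangle inequality along chains plus the covering lemma to produce the tower-type dilate of a single paraball. The one genuine logical difference is that the paper argues by contradiction that \emph{any} bipartition of the paraball family into two pseudo-distant halves is impossible (hence the graph is connected and one may take $\tilde f=F$), whereas you allow several connected components and use the same H\"older/convexity inequality to show that one component $c_0$ carries mass $\ge 1-C\epsilon'$, setting $\tilde f=F^{(c_0)}$. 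Both arguments rest on the identical inequality $\sum_c a_c u_c\le(\max_c a_c)^{(q-p)/q}$; your version is slightly more constructive and, because you explicitly disjointify $E_j=\bigsqcup_i E_{j,i}$ and index everything by $(j,i)$, is in fact more carefully organized than the paper's proof, which is loose about the distinction between level-set indices $j\in S$ and paraball indices $l\in\{1,\dots,N\}$ when defining $f^a,f^b$.
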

\begin{proof}
This lemma is an improvement over the previous lemma, in the sense that the paraballs 
$\{B_{j,i}\}$ have been replaced by a single paraball $B$, after scaling it with a factor of
$(C\epsilon^{-C})^{C\epsilon^{-C}}$. The proof of this lemma will be an application of Lemma \ref{orthogonal}
together with the previous lemma.

By the previous lemma we can assume that $f=\sum_{j\in S}2^{j}f_j, f_j\sim\chi_{E_{j}}$ where
$E_{j}\subset \cup_{i=1}^{C\epsilon^{-C}} B_{j,i}$ and $|S|\leq C\epsilon^{-C}$. Let 
$0<\,\eta<\, \epsilon^{C}$ be a small quantity to be chosen later. Let us write the collection of paraballs
$B_{j,i}$ as $\{B_{l}:1\leq l\leq N\}$. Then $N\leq C\epsilon^{-C}$. Let if possible $\{1,2,...,N\}=S^{a}\cup S^{b}$ 
be a partition of $\{1,2,...,N\}$ such that for each $(i,j)\in S^a\times S^b$,
$d(B_i, B_j)> C \eta^{-C}$. We continue as in the proof of Lemma \ref{highint}.  Let 
$g=\sum_{k\in\tilde{S}} 2^{k}g_k, g_k\sim\chi_{ F_k}$, be an arbitrary $L^{q^{'}}$ function with $\|g\|_{q^{'}}=1$. 
Now  partition each of the sets $F_{k}$ measurably 
as $F_{k}=F_{k}^{a}\cup F_{k}^{b}\cup F_{k}^{c}$ with the following property:
\begin{itemize}
\item For each $x\in F_{k}^{a}$ there exists $j\in S^{a}$ so that $T\chi_{B_{j}}(x)\geq \eta 
|B_{j}|^{\frac{1}{p}}\, |F_{k}|^{-\frac{1}{q}}$;
\item For each $x\in F_{k}^{b}$ there exists $j\in S^{b}$ so that $T\chi_{B_{j}}(x)\geq \eta 
|B_{j}|^{\frac{1}{p}}\, |F_{k}|^{-\frac{1}{q}}$.
\end{itemize}

Write 
$$
h^{a}=g\sum_{k\in\tilde{S}}2^{k}\chi_{F_{k}^{a}},\qquad\qquad h^{b}=g\sum_{k\in\tilde{S}}2^{k}\chi_{F_{k}^{b}},
$$
$$
f^{a}=f\sum_{j\in S^{a}}2^{j}\chi_{E_{j}},\qquad\qquad f^{b}=f\sum_{j\in S^{b}}2^{j}\chi_{E_{j}},
$$
As in the proof of Lemma \ref{highint}, if $\eta$ is sufficiently small, there is $i\in S^{a}$ and
$k\in \tilde{S}$ with
$$
T(\chi_{B_{i}}, \chi_{F_{k}^{b}})\geq \eta |B_{i}|^{\frac{1}{p}} |F_{k}|^{1-\frac{1}{q}}.
$$

But by the proof of Lemma \ref{orthogonal} this implies there is $j\in S^b$, such that
$d(B_i, B_j)\leq C\epsilon^{-C}$ which contradicts our hypothesis. Therefore it is not possible to
decompose the collection of paraballs corresponding to $f$ into a disjoint union of two sets such that
any two elements belonging to different sets are at least $C\epsilon^{-C}$ far with respect to the 
mock-distance $d$. Now let us fix a paraball $B_{j_0}$ corresponding to $f_0$. Now we construct
inductively a sequence of collection of paraballs by
\begin{itemize}
\item $\{B_1,....,B_N\}=\cup_{j=0}^N \mathbb{B}_j$;
\item $\mathbb{B}_0=\{B_{j_0}\}$;
\item $B\in \mathbb{B}_j$ and $B'\in \mathbb{B}_{j+1}$ implies $d(B, B')\leq C\epsilon^{-C}$.
\end{itemize}
By quasi-tirangle inequality this implies $d(B_{j_0}, B_j)\leq C\epsilon^{-C^{C\epsilon^{-C}}}$for all $j=1,2,...,N$.
\end{proof}
\smallskip

\section{Weak Convergence and Extremizers for $\theta\in(0,1)$}\label{weak}
\begin{lemma}\label{localization}
There exists a constant $C$ (depending only on $d$) and positive functions 
$\Psi_{1}, \Psi_{2}:(0,\infty)\rightarrow (0,\infty)$ and 
$\rho_{1}, \rho_{2}:(1,\infty)\rightarrow (0,\infty)$ satisfying 
$\frac{\Psi_{1}(t)}{t^p}\rightarrow 0$ and $\frac{\Psi_{2}(t)}{t^{q'}}\rightarrow 0$ as $t\rightarrow 0,\infty$ and $\rho_{i}(R)\rightarrow\infty$ 
as $R\rightarrow\infty$ with the following property. For any $\epsilon\,>\,0$ 
there exists a $\delta> 0$ such that for any nonnegative 
function $f$ with $\|f\|_{p}=1$ and $\|T(f)\|_{q}\geq (1-\delta)\,A$, there exists
$\phi\in G_{d}$ and a decomposition
$$
\phi^{*}(f)=g+h
$$
with $g, h\geq 0$ satisfying for large $R$,
$$
\|h\|_{p}< \epsilon,\quad \int_{\mathbb{R}^{d}} \Psi_{1}(g)\leq C, 
\quad \text{support}(g)\subset B(0,\rho_{1}(R)).
$$
In addition, there exists $F\geq 0$ satisfying $\|F\|_{q^{'}}=1$ and 
$$
\langle T(g), F\rangle\geq (1-\epsilon)\,A,\quad \int_{\mathbb{R}^{d}} \Psi_{2}(F)\leq C, 
\quad \text{support}(F)\subset B(0,\rho_{2}(R)).
$$
\end{lemma}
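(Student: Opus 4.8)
The plan is to build $g$ by localizing $f$ simultaneously in ``phase space'' (its rough level sets) and in physical space, and then to build $F$ by running the identical machinery for the adjoint $T^{*}$ — which is convolution with affine arclength measure on $-h$, hence an operator of exactly the same type, so every lemma above applies to it verbatim — and finally matching the two resulting paraballs. The hypothesis $\theta\in(0,1)$ enters in one essential way: by Lemma~\ref{compa} the parameters $\alpha,\beta$ of any quasiextremal paraball are comparable up to powers of the quasiextremality constant, and this is precisely what lets a symmetry carry such a paraball onto a \emph{bounded} model paraball; at the endpoints the normalized paraball keeps an unbounded direction and no physical-space localization is possible. Throughout, $\rho_{1},\rho_{2}$ will be explicit functions of $\epsilon^{-1}$ tending to $\infty$ (this is the parameter called $R$ in the statement), while $\delta=\delta(\epsilon)$ is fixed only at the very end.

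First I would apply Lemma~\ref{spatial} to $f$, obtaining $\tilde f=\sum_{j\in S}2^{j}f_{j}$, $f_{j}\sim\chi_{E_{j}}$, with $0\le\tilde f\le f$, $\|\tilde f\|_{p}\ge 1-\epsilon$, $\|T\tilde f\|_{q}\ge(1-\epsilon)A$, with $|S|$ and all $|J-j|$ at most $C\epsilon^{-C}$, and a single paraball $B_{J}=B(\bar x^{J},t_{J},\alpha_{J},\beta_{J})$ such that $E_{j}\subset\Lambda B_{J}$ (with $\Lambda=\Lambda(\epsilon)$ the tower constant of that lemma) and $\|2^{J}\chi_{B_{J}}\|_{p}\le C$. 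Since $\theta\in(0,1)$, Lemma~\ref{compa} gives $\beta_{J}/\alpha_{J}\in[c\epsilon^{C},C\epsilon^{-C}]$, so composing a translation, a gliding and the scaling $S_{1/\alpha_{J}}$ produces $\phi\in G_{d}$ carrying $B_{J}$ onto $B(0,0,1,\beta_{J}/\alpha_{J})$, a paraball contained in $B(0,C\epsilon^{-C})$ and within pseudo-distance $C\epsilon^{-C}$ of $B(0,0,1,1)$. Put $g_{0}=\phi^{*}\tilde f$: it has the same $L^{p}$ norm and image $L^{q}$ norm as $\tilde f$, is supported in $\Lambda\,\phi(B_{J})\subset B(0,\rho_{1})$ with $\rho_{1}=\rho_{1}(\epsilon):=(C\epsilon^{-C}\Lambda)^{d}$, and — tracking the scaling on the levels, using $\|2^{J}\chi_{B_{J}}\|_{p}\le C$ and the lower bound $\|2^{J}\chi_{E_{J}}\|_{p}\ge c\epsilon^{C}$ built into the construction (Lemma~\ref{quasi}) — all of its levels lie in $[c(\epsilon),C(\epsilon)]$. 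To obtain the uniform integrability I would run the proof of Corollary~\ref{decay} (equivalently Lemma~\ref{highint}) on $g_{0}$, omitting its auxiliary scaling since the levels are already bounded: discarding the pieces of $L^{p}$ mass below a threshold $\rho=\rho(\epsilon)$ gives $g\le g_{0}$ with $\|g_{0}-g\|_{p}<\epsilon$, and Lemma~\ref{highint} confines the surviving indices of mass $\sim 2^{-k}$ to an interval of length $C2^{Ck}$, so $\sum_{j\in S_{k}}\|2^{j}\chi_{E_{j}}\|_{p}^{p}\le C(\delta+2^{-ck})$; since the $E_{j}$ are disjoint and $\Psi_{1}(t)/t^{p}$ is globally bounded, $\int\Psi_{1}(g)\lesssim\sum_{k}(\delta+2^{-ck})\le C$ for $\delta$ small. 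With $h=\phi^{*}f-g=\phi^{*}(f-\tilde f)+(g_{0}-g)$ one has $\|h\|_{p}<2\epsilon$, which (after relabeling $\epsilon$) gives the first half of the lemma.

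For the dual function, choose $F^{(0)}\ge 0$ with $\|F^{(0)}\|_{q'}=1$ and $\langle Tg,F^{(0)}\rangle\ge(1-C\epsilon)A$; since $Tg$ is supported in $B(0,\rho_{2})$, $\rho_{2}=\rho_{2}(\epsilon):=C\rho_{1}^{d}$, replacing $F^{(0)}$ by its restriction there leaves $(g,F^{(0)})$ a $(1-C\epsilon)$-quasiextremal pair supported in $B(0,\rho_{1})\times B(0,\rho_{2})$. Applying the $T^{*}$-analogues of Lemmas~\ref{entref},~\ref{highint} and~\ref{spatial} to $F^{(0)}$ yields $\tilde F$ close to it in $L^{q'}$ (so $\langle Tg,\tilde F\rangle\ge(1-C\epsilon)A$), with boundedly many, boundedly spaced and non-negligible levels $2^{k}\chi_{F_{k}}$, all of whose $F_{k}$ lie in $\Lambda D^{*}$ for a single dual paraball $D^{*}$. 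The crux is to pin $D^{*}$ near $B^{*}(0,0,1,1)$: since both $g$ and $\tilde F$ now have only non-negligible levels, Lemma~\ref{uniformsmall} applied to $\langle Tg,\tilde F\rangle\ge(1-C\epsilon)A$ supplies a level set $E_{\ell}$ of $g$ and a level set $F_{k}$ of $\tilde F$ forming a $c\epsilon^{C}$-quasiextremal pair; Theorem~\ref{ballandquasi} then gives a paraball $B$ (with dual $B^{*}$) such that $|E_{\ell}\cap B|\ge c(\epsilon)>0$ and $|F_{k}\cap B^{*}|\ge c(\epsilon)>0$, the positivity coming from nondegeneracy together with $|B|\le|E_{\ell}|$, $|B^{*}|\le|F_{k}|$. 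Since $E_{\ell}\subset\Lambda\,\phi(B_{J})$ and $F_{k}\subset\Lambda D^{*}$, Proposition~\ref{distintersection} bounds $d(B,\Lambda\phi(B_{J}))$ and $d(B^{*},\Lambda D^{*})$, hence (via Lemma~\ref{triangle}) $d(B,\phi(B_{J}))$ and $d(B^{*},D^{*})$, by functions of $\epsilon$; combined with $d(\phi(B_{J}),B(0,0,1,1))\le C\epsilon^{-C}$ and two more uses of Lemma~\ref{triangle} this forces $d(D^{*},B^{*}(0,0,1,1))\le\Xi(\epsilon)<\infty$. The covering lemma then localizes $\mathrm{supp}(\tilde F)$ inside a ball $B(0,\rho_{2}')$ with $\rho_{2}'=\rho_{2}'(\epsilon)$; the distinguished level of $\tilde F$ is comparable to $1$ (again via $\alpha|E|=\beta|F|$ with $\alpha\sim_{\epsilon}\beta$ at interior $\theta$), and running the argument of the previous paragraph for $T^{*}$ gives $\int\Psi_{2}(\tilde F)\le C$ after discarding an $L^{q'}$-small remainder. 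Finally $F:=\tilde F/\|\tilde F\|_{q'}$ (with $\|\tilde F\|_{q'}\in[1-C\epsilon,1]$, so the renormalization is harmless provided $\Psi_{2}$ is chosen of at most polynomial growth) has the remaining properties; fixing $\delta=\delta(\epsilon)$ small enough to dominate the finitely many $\epsilon$-dependent constants produced above, and relabeling $\epsilon$, completes the proof.

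The main obstacle is exactly the ``crux'' of the third paragraph — showing that the dual paraball $D^{*}$ cannot escape the region already fixed by $g$, so that $F$ is localized without needing a symmetry of its own. This is the dual replay of the mechanism in the proof of Lemma~\ref{orthogonal}: one must extract a genuinely nondegenerate quasiextremal \emph{pair} of level sets from the large pairing $\langle Tg,\tilde F\rangle$, feed it to Theorem~\ref{ballandquasi}, and then chain Proposition~\ref{distintersection} with Lemma~\ref{triangle}. Everything else is bookkeeping resting on Lemma~\ref{spatial}, Lemma~\ref{highint} and the proof of Corollary~\ref{decay}; its one delicate feature is that the $\epsilon$-dependent constants $\Lambda(\epsilon),\Xi(\epsilon),\dots$ are towers, so $\delta=\delta(\epsilon)$ can only be chosen last, after all of them have been produced, and must be verified to beat all of them.
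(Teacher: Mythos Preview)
Your proposal follows essentially the same route as the paper --- apply Lemma~\ref{spatial} to $f$, use Lemma~\ref{compa} (valid precisely because $\theta\in(0,1)$) to normalize the paraball to one comparable to the unit ball, then repeat for $T^{*}$ to handle $F$ --- and your level of detail exceeds the paper's, which disposes of the dual side in a single sentence (``follows similarly by applying the same proof to $T^{*}$'').  Your ``crux'' paragraph, pinning the dual paraball $D^{*}$ near the origin via a quasiextremal level-set pair and Proposition~\ref{distintersection}/Lemma~\ref{triangle}, is a legitimate way to fill what the paper leaves implicit: namely, why no \emph{second} symmetry is needed for $F$.

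One correction: the sentence ``since $Tg$ is supported in $B(0,\rho_{2})$, $\rho_{2}:=C\rho_{1}^{d}$'' is false.  The operator $T$ is convolution with affine arclength on the \emph{unbounded} moment curve, so $Tg$ has unbounded support even when $g$ is compactly supported (for any $x$ one can find $t$ with $x+h(t)\in\operatorname{supp}(g)$).  Fortunately this restriction step is inessential: you never actually use it, since the subsequent crux argument produces the spatial localization of $\tilde F$ directly from the paraball $D^{*}$ and the covering lemma.  Simply delete that clause and proceed straight to the $T^{*}$-analogues of Lemmas~\ref{entref}, \ref{highint}, \ref{spatial}; the rest of your argument stands.
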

\begin{proof}
Let $\epsilon>0$. Fix $\delta>0$ sufficiently small. Let $\|f\|_{p}=1$ with $\|Tf\|_{q}\geq A(1-\delta)$.
By applying Lemma \ref{spatial} to $f$ we can assume that there is a $\phi\in G_d$ such that
$$
\phi^{*}(f)=g+h
$$
with $\|h\|_{p}<\epsilon$ and support$(g)\subset B_J=B(\bar x_J,t_J,\alpha_J,\beta_J)$. In addition, by Lemma \ref{compa},
we can assume that in Lemma \ref{spatial} the distinguished index $J=0$ and the paraball $B_J=\{y\in\mathbb{R}^{d}; \|y\|<1\}$.
Therefore $\|h\|_{p}<\epsilon$ and support($g$)$\subset B(0,C\epsilon^{-C^{C\epsilon^{-C}}})$.

We set $\rho(R)=CR^{C^{CR^{C}}}$. The second part of the conclusion follows similarly
by applying the same proof to the operator $T^{*}$ since $T^{*}$ is the convolution with
the affine arc length measure on the curve $-h(t)$.
\end{proof}

\subsection{Proof of existence of extremizers for $\theta\in(0,1)$}\label{existence}
The proof of the existence of extremizers is
along the lines of the proof for the corresponding result in \cite{MC5}. For the sake of completeness we give the details of the proof.
Let $\{f_n\}$ be any extremizing sequence.
By the previous Lemma there exist $\{\phi_{n}\in G_d\}$, such that $\phi^{*}_n(f_{n})=g_{n}+h_{n}$, while the
functions $g_{n}$ and $h_{n}$ satisfy all the conclusions of the previous Lemma corresponding to
$\epsilon_n=\frac{1}{n}$. Also there exists a sequence $\{F_{n}\}$ with $\|F_{n}\|_{q^{'}}=1$ and 
$T(g_{n}, F_{n})\rightarrow A$.

By applying the Banach-Alaoglu Theorem, after passing through a subsequence  we can assume 
that $g_{n}\rightharpoonup g$ and $F_{n}\rightharpoonup F$ weakly. Therefore
$\|F\|_{{q}^{'}}\leq 1$ and $\|g\|_{p}\leq 1$. Let us fix a large $R$. Now we set
\begin{align}
g_{n,R}(x)&=g_{n}(x)\,\chi_{\|x\|\leq R}(x)\,\chi_{g_{n}\leq R}(x),\qquad g_{R}(x)=g(x)\,
\chi_{\|x\|\leq R}(x)\,\chi_{g\leq R}(x);\\
F_{n,R}(x)&=F_{n}(x)\,\chi_{\|x\|\leq R}(x)\,\chi_{F_{n}\leq R}(x),.\qquad F_{R}(x)=F(x)\,
\chi_{\|x\|\leq R}(x)\,\chi_{F\leq R}(x).
\end{align}
By stationary phase argument for any fixed $\psi\in C_{0}^{1}(\mathbb{R}^{d})$, the operator $f\mapsto \psi T(\psi f)$
maps $L^{2}(\mathbb{R}^{d})$ boundedly to the Sobolev space $H^{\frac{1}{d}}$, which
in turn embeds into $L^{s}$ where $\frac{1}{s} = \frac{1}{2}-\frac{1}{d^{2}}$.
Thus the weak convergence of $g_{n, R}$ to $g_{R}$ as $n\rightarrow\infty$ implies 
the $L^{s}$ norm convergence of $T(g_{n, R})$ to $T(g_{R})$ as $n\rightarrow\infty$, for 
every fixed $R$. Therefore $T(G_{n, R}, F_{n, R})\rightarrow T(g_{R}, F_{R})$ as 
$n\rightarrow \infty$ for a fixed $R$.

By Lemma \ref{localization} the integral of $g^{p_{\theta}}_{n}$ outside the ball of
radius $R$ goes to zero as $R$ goes to infinity uniformly in $n$. This implies
$g_{n,R}$ converges to $g_{n}$ in $L^{p_{\theta}}$ as $R$ goes to infinity uniformly in $n$. Similarly
$F_{n,R}$ converges to $F_{n}$ in $L^{q^{'}_{\theta}}$ uniformly in $n$. This together with the 
conclusion from previous paragraph implies  
\begin{equation}\label{limit}
A=\lim_{n\rightarrow\infty}T(g_{n}, F_{n})=T(g, F).
\end{equation}
So $g$ is an extremizer.

\smallskip

\section{$L^{p}$ convergence of extremizing subsequence}
The main result of this section is the $L^{p}$ convergence of a subsequence of any extremizing
sequence after applying suitable symmetries. The proof is similar to the corresponding
result in ~\cite{MC5}. For the convenience of the reader we give the details of the proof.

\textit{\bf{Euler-Lagrange identity}}: Let $f$ be a nonnegative extremizer with $\|f\|_{p}=1$.
Then by Holder's inequality
\begin{multline*}
A^{q}=\|T(f)\|_{q}^{q}=\langle T(f), T(f)^{q-1}\rangle=\langle f, T^{*}(T(f)^{q-1})\rangle\\
\qquad\qquad \qquad\qquad\leq \|f\|_{p} \|T^{*}(T(f)^{q-1})\|_{p^{'}}= A \|(Tf)^{q-1}\|_{q{'}}=
A \|T(f)\|_{q}^{\frac{q}{q^{'}}}=A A^{\frac{q}{q^{'}}}=A^{q}.\\
\end{multline*}
Since the equality holds for the above chain of inequalities, we have $T^{*}(T(f)^{q-1})$ agrees 
with a constant multiple of $f^{\frac{p}{p^{'}}}$ almost everywhere on $\mathbb{R}^{d}$. The 
above equality implies this constant is $A^{q}$. So finally, we have for any nonnegative 
extremizer $f$ with $\|f\|_{p}=1$,
\begin{equation}\label{EL}
T^{*}\big((T(f))^{q-1}\big)=A^{q} f^{p-1}\quad\text{almost everywere on}\quad\mathbb{R}^{d}.
\end{equation}

\begin{lemma}\label{strongconv}
Let $f_{n}$ be an extremizing sequence. Then there exist a sequence of symmetries
$\{\phi_{n}\}\subset G_{d}$ and an extremal $F$ such that $\{\phi_{n_{k}}^{*} (f_{n_{k}})\}$
converges to $F$ in $L^{p}$ for some subsequence $\{f_{n_{k}}\}$.
\end{lemma}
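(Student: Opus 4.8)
The plan is to combine the weak-convergence analysis carried out in subsection~\ref{existence} with the uniform convexity of $L^{p_\theta}$, which is precisely the content of Lemma~\ref{strong}. Throughout $(p,q)=(p_\theta,q_\theta)$ with $\theta\in(0,1)$, so in particular $1<p_\theta<\infty$ and $1<q_\theta'<\infty$ since $d>2$.

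First I would extract the limiting extremizer exactly as in subsection~\ref{existence}. Applying Lemma~\ref{localization} with $\epsilon_n=\frac{1}{n}$ produces symmetries $\phi_n\in G_d$ and decompositions $\phi_n^*(f_n)=g_n+h_n$ with $g_n,h_n\ge0$, $\|h_n\|_{p_\theta}\to0$, $\int\Psi_1(g_n)\le C$, $\operatorname{supp}(g_n)$ contained in a fixed ball, together with $F_n\ge0$ with $\|F_n\|_{q_\theta'}=1$, $\langle T(g_n),F_n\rangle\to A$, $\int\Psi_2(F_n)\le C$ and $\operatorname{supp}(F_n)$ in a fixed ball. Passing to a subsequence (relabelled $\{f_n\}$, and denoted $\{f_{n_k}\}$ in the final statement) and invoking Banach--Alaoglu in the reflexive spaces $L^{p_\theta}$ and $L^{q_\theta'}$, I may assume $g_n\rightharpoonup g$ in $L^{p_\theta}$ and $F_n\rightharpoonup F$ in $L^{q_\theta'}$. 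As in subsection~\ref{existence} (via the stationary-phase smoothing of $f\mapsto\psi T(\psi f)$ together with the tightness furnished by $\Psi_1,\Psi_2$ and the uniform spatial localization) one gets $\langle T(g_n),F_n\rangle\to\langle T(g),F\rangle$, so $\langle T(g),F\rangle=A$; by weak lower semicontinuity $\|g\|_{p_\theta}\le1$ and $\|F\|_{q_\theta'}\le1$, whence
$$A=\langle T(g),F\rangle\le\|T(g)\|_{q_\theta}\,\|F\|_{q_\theta'}\le A\,\|g\|_{p_\theta}\,\|F\|_{q_\theta'}\le A,$$
and equality throughout forces $\|g\|_{p_\theta}=\|F\|_{q_\theta'}=1$ and $\|T(g)\|_{q_\theta}=A$, i.e.\ $g$ is an extremizer.

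Next I would upgrade the weak convergence to strong convergence. From $0\le g_n\le\phi_n^*(f_n)$ we get $\|g_n\|_{p_\theta}\le1$, while $\|g_n\|_{p_\theta}\ge\|\phi_n^*(f_n)\|_{p_\theta}-\|h_n\|_{p_\theta}=1-\|h_n\|_{p_\theta}\to1$; hence $\|g_n\|_{p_\theta}\to1$. Since $g\in L^{p_\theta}$ we have $g^{p_\theta-1}\in L^{p_\theta'}$, so weak convergence gives $\langle g_n,g^{p_\theta-1}\rangle\to\langle g,g^{p_\theta-1}\rangle=\|g\|_{p_\theta}^{p_\theta}=1$. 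Thus $\tilde g_n:=g_n/\|g_n\|_{p_\theta}$ satisfies $\|\tilde g_n\|_{p_\theta}=1$ and $\langle\tilde g_n,g^{p_\theta-1}\rangle\to1$, so Lemma~\ref{strong} (with exponent $p_\theta\in(1,\infty)$) yields $\tilde g_n\to g$ in $L^{p_\theta}$, and therefore $g_n\to g$ in $L^{p_\theta}$. Combined with $\|h_n\|_{p_\theta}\to0$ this gives $\phi_n^*(f_n)=g_n+h_n\to g$ in $L^{p_\theta}$; taking $F:=g$, which is an extremal by the previous step, completes the proof of Lemma~\ref{strongconv}.

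The genuinely delicate point is the passage from weak to strong convergence: a priori the weak limit $g$ could fail to have full norm, with mass escaping to spatial infinity or to infinite or zero height, and it is exactly the localization, entropy-refinement, and weak-higher-integrability lemmas of the preceding sections (packaged into Lemma~\ref{localization}) that preclude this and make $g$ a genuine unit-norm extremizer; granted that, Lemma~\ref{strong}---which rests on the strict convexity of $L^{p_\theta}$ for $1<p_\theta<\infty$---does the remaining work. The Euler--Lagrange identity $(\ref{EL})$ supplies an alternative route to the limit $\langle g_n,g^{p_\theta-1}\rangle\to1$, through $T^*F=A\,g^{p_\theta-1}$ and $\langle Tg_n,F\rangle=\langle g_n,T^*F\rangle\to\langle Tg,F\rangle=A$, but it is not strictly needed here.
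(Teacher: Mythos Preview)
Your proof is correct and follows essentially the same approach as the paper: reduce to the weak limit $g$ produced in subsection~\ref{existence}, verify $\langle g_n,g^{p-1}\rangle\to 1$, and invoke Lemma~\ref{strong}. The only cosmetic difference is that the paper applies Lemma~\ref{strong} directly to $\phi_n^*(f_n)$ (which already has unit norm) and routes the key limit through the Euler--Lagrange identity $T^*\big((Tg)^{q-1}\big)=A^q g^{p-1}$, whereas you observe---correctly, and as you note in your last paragraph---that since $g^{p-1}\in L^{p'}$ the weak convergence $g_n\rightharpoonup g$ already gives $\langle g_n,g^{p-1}\rangle\to\|g\|_p^p=1$ without appealing to \eqref{EL}.
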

\begin{proof}
After passing through a subsequence, if necessary and applying suitable symmetry
$\phi_{n}^{*}$ we have that $\phi_{n}^{*}(f_{n})=g_{n}+h_{n}$ with $\|h_{n}\|_{p}\rightarrow 0$
and $\|T(g_{n})\|_{q}\rightarrow A$. Therefore it is enough to prove that $\{\phi_{n}^{*}(f_{n})\}$ converges
to $g$ in $L^{p}$ where $g$ is as in (\ref{limit}).

By (\ref{EL}) there exists $H\geq 0$ such that $T^{*}(H)=A^{q} g^{p-1}$ almost everywhere
on $\mathbb{R}^{d}$. By (\ref{limit}) we have
\begin{multline*}
A^{q}=A^{q}\langle g, g^{p-1}\rangle=\langle g, T^{*}(H)\rangle=\lim_{n\rightarrow\infty}\langle g_{n},
T^{*}(H)\rangle\\
=A^{q}\lim_{n\rightarrow\infty}\langle g_{n}, g^{p-1}\rangle=A^{q}\lim_{n\rightarrow\infty}\langle \phi_{n}^{*}(f_{n}),g^{p-1}\rangle.\\
\end{multline*}
At this point we apply Theorem $2.11$ in \cite{LL} to the pair $(\{\phi_{n}^{*}(f_{n})\}, g)$ to get the desired
conclusion.
\end{proof}
\smallskip

\section{Extremizers at end points}
In this section we shall prove Theorem \ref{MT2}, which describes a relation between extremizers for $T:L^{p_{0}}\rightarrow L^{q_{0}}$ and extremizers for $X^{*}:L^{p_{0}}\rightarrow L^{q_{0}}$, adjoint of $X$ as defined in \ref{defX}. Simultaneously we prove a relation between extremizers for $T:L^{p_{1}}\rightarrow L^{q_{1}}$ and extremizers for $X:L^{p_{1}}\rightarrow L^{q_{1}}$. We would like to thank Michael Christ for his suggestion to look at the restricted X-ray transform, $X$, for the endpoint cases.

\begin{lemma}\label{norms}
Let $T$ and $X$ be defined as in \ref{defT} and \ref{defX} respectively and $d>2$. Then $\|T\|_{L^{p_0}\rightarrow L^{q_0}}\geq\|X^{*}\|_{L^{p_0}\rightarrow L^{q_0}}$ and $\|T\|_{L^{p_1}\rightarrow L^{q_1}}\geq\|X\|_{L^{p_1}\rightarrow L^{q_1}}$.
\end{lemma}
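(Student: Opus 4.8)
The plan is to exhibit $X$ and $X^{*}$ as degenerate limits of $T$ under the non-symmetric rescalings already named in Theorem \ref{endpoint}, and to transport the operator norm bound through the limit. It suffices to prove $\|T\|_{L^{p_1}\to L^{q_1}}\ge\|X\|_{L^{p_1}\to L^{q_1}}$; the inequality for $T:L^{p_0}\to L^{q_0}$ then follows by duality, since $\|T\|_{L^{p_0}\to L^{q_0}}=\|T^{*}\|_{L^{q_0'}\to L^{p_0'}}=\|T^{*}\|_{L^{p_1}\to L^{q_1}}$, $T^{*}=\iota^{*}T\iota^{*}$ with $\iota(x)=-x$ an $L^{p}$ isometry for every $p$ (so $\|T^{*}\|_{L^{p_1}\to L^{q_1}}=\|T\|_{L^{p_1}\to L^{q_1}}$), and likewise $\|X^{*}\|_{L^{p_0}\to L^{q_0}}=\|X\|_{L^{q_0'}\to L^{p_0'}}=\|X\|_{L^{p_1}\to L^{q_1}}$.

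For $r>0$ let $\Phi_r^{*}f(z)=r^{d/p_1}f(rz)$, which, as $d/p_1=\tfrac{d^2-d+2}{d+1}$, is exactly the $L^{p_1}$ isometry appearing in Theorem \ref{endpoint}. Changing variables $t\mapsto s/r$ in $T(\Phi_r^{*}f)(x)=\int_{\mathbb{R}}f(rx+rh(t))\,dt$ gives
\begin{equation*}
T(\Phi_r^{*}f)(x)=r^{\frac{d}{p_1}-1}\int_{\mathbb{R}}f\Big(rx_1+s,\ rx_2+\tfrac{s^2}{r},\ \dots,\ rx_d+\tfrac{s^d}{r^{d-1}}\Big)\,ds .
\end{equation*}
Let $\Psi_r$ be the diffeomorphism of $\mathbb{R}^{d}$ whose first coordinate is $x_1$ and whose $j$-th coordinate is $x_j'/r-(-x_1)^{j}$ for $2\le j\le d$; its Jacobian is $r^{-(d-1)}$, so $\psi_r^{*}g:=r^{-(d-1)/q_1}\,g\circ\Psi_r$ is an $L^{q_1}$ isometry. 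Substituting $s=u-rx_1$ in the displayed integral evaluated at $\Psi_r(x_1,x')$ and expanding $(u-rx_1)^{j}/r^{j-1}$, the first argument of $f$ becomes exactly $u$ and the remaining $d-1$ arguments become $x'+u\,\nu(-x_1)+O(1/r)$, where $\nu(t)=(2t,3t^2,\dots,dt^{d-1})$ is the direction field appearing in $(\ref{defX})$; moreover a short computation gives $\tfrac{d}{p_1}-1-\tfrac{d-1}{q_1}=0$, which is exactly where the specific values of $p_1,q_1$ enter. Hence, for every continuous compactly supported $f$,
\begin{equation*}
\psi_r^{*}\big(T(\Phi_r^{*}f)\big)(x_1,x')\ \longrightarrow\ \int_{\mathbb{R}}f\big(u,x'+u\,\nu(-x_1)\big)\,du=Xf(-x_1,x')\qquad(r\to\infty),
\end{equation*}
pointwise: the $O(1/r)$ error is uniform for $|u|$ bounded, which is the only relevant range since the first argument equals $u$, and $f$ is uniformly continuous.

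Now apply Fatou's lemma to $|\psi_r^{*}(T(\Phi_r^{*}f))|^{q_1}$ and use that $\psi_r^{*}$ is an $L^{q_1}$ isometry, $\Phi_r^{*}$ an $L^{p_1}$ isometry, and $x_1\mapsto-x_1$ an $L^{q_1}$ isometry:
\begin{equation*}
\|Xf\|_{q_1}\ \le\ \liminf_{r\to\infty}\big\|\psi_r^{*}(T(\Phi_r^{*}f))\big\|_{q_1}\ =\ \liminf_{r\to\infty}\|T(\Phi_r^{*}f)\|_{q_1}\ \le\ \|T\|_{L^{p_1}\to L^{q_1}}\,\|f\|_{p_1}
\end{equation*}
for all continuous compactly supported $f$. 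Such $f$ are dense in $L^{p_1}$, and $X$ is bounded from $L^{p_1}$ into $L^{q_1}(\mathbb{R}^d)$: in Theorem \ref{CELSD} the choice $\theta=\tfrac{(d-1)(d+2)}{d(d+1)}\in(0,1)$ gives $p_\theta=p_1$ and $q_\theta=r_\theta=q_1$, so that $L^{q_\theta}(L^{r_\theta},dt)=L^{q_1}(\mathbb{R}^d)$. Therefore the inequality persists for all $f\in L^{p_1}$, whence $\|X\|_{L^{p_1}\to L^{q_1}}\le\|T\|_{L^{p_1}\to L^{q_1}}$; combined with the duality reduction of the first paragraph this proves the lemma.

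The step I expect to be most delicate is the passage to the limit. The pointwise convergence above is elementary, but a direct dominated-convergence argument would require the decay estimate $|Xf(-x_1,x')|\lesssim (1+|x_1|)^{-(d-1)}$ (coming from the fact that the line through a fixed compact set in direction $\nu(-x_1)$ is short when $|x_1|$ is large) in order to produce an $L^{q_1}$ majorant; invoking Fatou's lemma instead sidesteps this entirely, at the cost of only obtaining the inequality we need rather than an identity of norms. One should also verify carefully that $\Phi_r^{*}$ and $\psi_r^{*}$ are genuine isometries with the stated normalizations and that the numerical identity $\tfrac{d}{p_1}-1-\tfrac{d-1}{q_1}=0$ holds, both of which are routine.
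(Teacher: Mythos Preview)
Your proof is correct and follows essentially the same strategy as the paper's: exhibit $X$ (resp.\ $X^{*}$) as a scaling limit of $T$ and pass the norm bound through the limit. The paper works at the $(p_0,q_0)$ endpoint and uses the bilinear pairing $\langle Tf_\epsilon,g_\epsilon\rangle\to\langle X^{*}f,g\rangle$ for smooth compactly supported $f,g$; you work at the dual endpoint $(p_1,q_1)$ and replace the pairing argument by Fatou's lemma applied directly to the $L^{q_1}$ norm, which is a clean way to avoid constructing an explicit majorant. The computations (the Jacobian of $\Psi_r$, the identity $d/p_1-1-(d-1)/q_1=0$, and the identification of the CELSD exponents at $\theta=\tfrac{(d-1)(d+2)}{d(d+1)}$) all check out, and your duality reduction via $T^{*}=\iota^{*}T\iota^{*}$ is correct.
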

\begin{proof}
It suffices to show that $\|T\|_{L^{p_0}\rightarrow L^{q_0}}\geq\|X^{*}\|_{L^{p_0}\rightarrow L^{q_0}}$ since $(p_1,q_1)=(q'_0,p'_0)$ and $T^{*}$ is the same operator as $T$ with the curve $h(t)$ replaced by $-h(t)$. Let $\epsilon>0$. Let $f\in L^{p_0}$ and $g\in L^{q'_0}$. Let $\gamma(t)=(t^2,t^3,...,t^d)\in\mathbb{R}^{d-1}$. Define
$$
\tilde{f}_\epsilon(x):={\epsilon}^{\frac{d-1}{p_0}} f((0,\epsilon(x_2,...,x_d)+h(x_1))
$$ 
and
$$
\tilde{g}_\epsilon(x)={\epsilon}^{\frac{d}{q^{'}_0}}g(\epsilon x).
$$

Let $f$ and $g$ be compactly supported smooth functions. Then
\begin{equation}
\begin{split}
\langle X^{*}f,g\rangle&=\lim_{\epsilon\to 0}\int_{(t,y)\in\mathbb{R}\times\mathbb{R}^{d}}f\big(\epsilon y_1+t, y-\frac{\gamma(\epsilon y_1+t)-\gamma(t)}{\epsilon}\big)g(y)\,dy\,dt\\
&=\lim_{\epsilon\to 0}\langle Tf_{\epsilon}, g_{\epsilon}\rangle\leq \|T\|
\end{split}
\end{equation}
where $f_{\epsilon}, g_\epsilon$ are the functions such that $\tilde{f_{\epsilon}}=f$ and $\tilde{g_{\epsilon}}=g$.
\end{proof}

\begin{lemma}\label{samenorm}
Let $T$ and $X$ be as above and $d>2$.
\begin{itemize}
\item If there exists an extremizing sequence for $T:L^{p_0}\rightarrow L^{q_0}$ that does not have a subsequence converging to an extremizer modulo symmetries of $T$, then $\|T\|_{L^{p_0}\rightarrow L^{q_0}}=\|X^{*}\|_{L^{p_0}\rightarrow L^{q_0}}$.
\item  If there exists an extremizing sequence for $T:L^{p_1}\rightarrow L^{q_1}$ that does not have a subsequence converging to an extremizer modulo symmetries of $T$, then $\|T\|_{L^{p_1}\rightarrow L^{q_1}}=\|X\|_{L^{p_1}\rightarrow L^{q_1}}$.
\end{itemize}
\end{lemma}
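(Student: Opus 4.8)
The plan is to run, for the endpoint exponents $(p_0,q_0)$, the same extraction argument that was used in Subsection \ref{existence} to produce extremizers at $\theta\in(0,1)$, to observe that its only possible failure is the degeneration of the localizing paraball into an arbitrarily thin tube, and to show that once this degeneration is undone by the nonsymmetric scaling of Theorem \ref{endpoint}, the operator $T$ is replaced in the limit by $X^{*}$, which then achieves the operator norm $A=\|T\|_{L^{p_0}\to L^{q_0}}$. Let $\{f_n\}$ be an extremizing sequence for $T:L^{p_0}\to L^{q_0}$ with $\|f_n\|_{p_0}=1$. First I would apply Lemma \ref{spatial}, together with Corollary \ref{decay} (whose ingredients Lemma \ref{entref} and Lemma \ref{highint} are valid at the endpoint), to obtain symmetries $\phi_n\in G_d$, a splitting $\phi_n^{*}f_n=g_n+h_n$ with $\|h_n\|_{p_0}\to0$ and $\|Tg_n\|_{q_0}\to A$, and a distinguished paraball which, after composing $\phi_n$ with a translation, a gliding and a scaling (all symmetries of $T$) and using Lemma \ref{compa} at $\theta=0$, which gives $\alpha\lesssim\beta$ so the long axis can be normalized to $1$, may be taken to be $B_n=B(0,0,\alpha_n,1)$ with $\alpha_n\in(0,C]$, $g_n$ supported in a dilate of $B_n$, and $\{g_n\}$ uniformly tight and uniformly $\Psi_1$-integrable; likewise, applying the same on the range side, a near-optimal dual sequence $\{F_n\}$, $\|F_n\|_{q_0'}=1$, $\langle Tg_n,F_n\rangle\to A$, localized to a dilate of $B_n^{*}(0,0,\alpha_n,1)$ and uniformly $\Psi_2$-integrable. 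If $\liminf_n\alpha_n>0$, then along a subsequence $\alpha_n\to\alpha_{*}>0$, the paraballs are uniformly non-degenerate, and the argument of Subsection \ref{existence} applies to produce an extremizer for $T$ to which $\{\phi_n^{*}f_n\}$ converges in $L^{p_0}$, contradicting the hypothesis; hence, after passing to a subsequence, $\alpha_n\to0$.

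Now I would apply the nonsymmetric scaling. Put $\mathcal{D}_\epsilon f(x)=\epsilon^{\frac{d-1}{p_0}}f\big((0,\epsilon x')+h(x_1)\big)$ and $\mathcal{R}_\epsilon g(x)=\epsilon^{\frac{d}{q_0}}g(\epsilon x)$, so that $\mathcal{D}_\epsilon$ is an isometry of $L^{p_0}$, $\mathcal{R}_\epsilon$ an isometry of $L^{q_0}$, and $S_\epsilon:=\mathcal{R}_\epsilon\circ T\circ\mathcal{D}_\epsilon^{-1}$ satisfies $\|S_\epsilon\|_{L^{p_0}\to L^{q_0}}=A$ for every $\epsilon$. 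With the choice $\epsilon=\alpha_n$, the tube $B(0,0,\alpha_n,1)$ is carried by $\mathcal{D}_{\alpha_n}$ into a bounded dilate of the unit cube, and $B^{*}(0,0,\alpha_n,1)$ into a bounded dilate of the unit cube by the matching $L^{q_0'}$-isometry on the dual side; so, setting $G_n=\mathcal{D}_{\alpha_n}g_n$ and $\mathcal{H}_n$ the image of $F_n$ under that dual isometry, one has $\|G_n\|_{p_0}\to1$, $\|\mathcal{H}_n\|_{q_0'}=1$, $\|S_{\alpha_n}G_n\|_{q_0}=\|Tg_n\|_{q_0}\to A$, and $\langle S_{\alpha_n}G_n,\mathcal{H}_n\rangle=\langle Tg_n,F_n\rangle\to A$. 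Since the anisotropic Jacobian weights $\alpha_n^{(d-1)/p_0}$ only sharpen the amplitude bounds as $\alpha_n\to0$, the families $\{G_n\}$ and $\{\mathcal{H}_n\}$ remain uniformly integrable, and they are now uniformly supported in a fixed bounded set. Passing to a further subsequence, $G_n\rightharpoonup G$ in $L^{p_0}$ and $\mathcal{H}_n\rightharpoonup\mathcal{H}$ in $L^{q_0'}$.

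The last step is to pass to the limit in $\langle S_{\alpha_n}G_n,\mathcal{H}_n\rangle$. Exactly as in the proof of Lemma \ref{norms}, for smooth compactly supported $f,g$ one has $\langle S_\epsilon f,g\rangle\to\langle X^{*}f,g\rangle$ as $\epsilon\to0$, because $\epsilon^{-1}\big(\gamma(\epsilon y_1+t)-\gamma(t)\big)\to y_1(2t,3t^2,\dots,dt^{d-1})$ with $\gamma(t)=(t^2,\dots,t^d)$; that is, $S_\epsilon\to X^{*}$ in the weak operator topology on nice functions. To upgrade this to the pairing above I would truncate $G_n,\mathcal{H}_n$ in space and in amplitude to $G_{n,R},\mathcal{H}_{n,R}$; the uniform tightness and uniform integrability make $\|G_n-G_{n,R}\|_{p_0}+\|\mathcal{H}_n-\mathcal{H}_{n,R}\|_{q_0'}$ small uniformly in $n$ once $R$ is large, just as in Subsection \ref{existence}, while on the truncated, bounded, compactly supported pieces the stationary-phase smoothing of $T$, which under the rescalings degenerates to the local smoothing of the restricted X-ray transform $X$ on fixed bounded sets, converts the weak convergence of $G_{n,R}$ and $\mathcal{H}_{n,R}$ into $\langle S_{\alpha_n}G_{n,R},\mathcal{H}_{n,R}\rangle\to\langle X^{*}G_R,\mathcal{H}_R\rangle$; letting $R\to\infty$ gives $\langle X^{*}G,\mathcal{H}\rangle=A$. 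Since $\|G\|_{p_0}\le\liminf\|G_n\|_{p_0}=1$ and $\|\mathcal{H}\|_{q_0'}\le\liminf\|\mathcal{H}_n\|_{q_0'}=1$, Hölder's inequality yields $A=\langle X^{*}G,\mathcal{H}\rangle\le\|X^{*}\|_{L^{p_0}\to L^{q_0}}\|G\|_{p_0}\|\mathcal{H}\|_{q_0'}\le\|X^{*}\|_{L^{p_0}\to L^{q_0}}$, and combined with Lemma \ref{norms} this gives $\|T\|_{L^{p_0}\to L^{q_0}}=\|X^{*}\|_{L^{p_0}\to L^{q_0}}$. The second bullet is proved in the same way, using that an extremizing sequence for $T:L^{p_1}\to L^{q_1}$ is one for the adjoint $T^{*}$ (convolution with $-h$, an operator of the same type) between $L^{q_1'}$ and $L^{p_1'}$, that the degenerating paraballs are now the dual tubes $B^{*}(0,0,1,\beta_n)$, that the relevant nonsymmetry is the isotropic dilation $f_n\mapsto r_n^{(d^2-d+2)/(d+1)}f_n(r_nx)$ of Theorem \ref{endpoint}, and that the limiting operator is $X$ rather than $X^{*}$.

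The main obstacle is the last step. One cannot simply insert weak limits into a bilinear form, so the truncation errors must be made uniform in $n$, which is where the entropy refinement, the weak higher integrability, and the spatial localization of Lemma \ref{localization} are all used, and the family $\{S_{\alpha_n}\}$ of oscillatory operators must be controlled uniformly enough that, applied to the bounded compactly supported truncations, it converges strongly in some $L^{s}$ with $s>2$; this requires the stationary-phase estimates for $T$ to pass, uniformly in the scaling parameter, to the corresponding local smoothing estimate for $X$, which is the only genuinely new analytic input beyond the machinery already developed.
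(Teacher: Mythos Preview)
Your proposal is correct and follows essentially the same route as the paper: normalize via symmetries to a paraball $B(0,0,\alpha_n,1)$, argue that failure of precompactness forces $\alpha_n\to 0$, apply the nonsymmetric rescaling $\mathcal{D}_{\alpha_n}$ on the domain and the isotropic dilation on the range, and pass to the limit in the bilinear form using truncation plus local smoothing to conclude $A\le\|X^{*}\|$. Your framing via the one-parameter family $S_\epsilon=\mathcal{R}_\epsilon\circ T\circ\mathcal{D}_\epsilon^{-1}$ and your explicit flagging of the need for smoothing estimates uniform in $\epsilon$ is in fact more careful than the paper's presentation, which simply invokes the Sobolev mapping property of $X^{*}$ from \cite{CNSW} at the limit; the underlying argument is the same.
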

\begin{proof}
We shall prove the lemma only for $T:L^{p_0}\rightarrow L^{q_0}$, the other case being identical. By the previous Lemma it suffices to show that 
$\|T\|_{L^{p_0}\rightarrow L^{q_0}}\leq\|X^{*}\|_{L^{p_0}\rightarrow L^{q_0}}$. By hypothesis, there exists an extremizing sequence $\{f_n\}$ for $T:L^{p_0}\rightarrow L^{q_0}$ such that for any sequence of symmetries $\{\phi_{n}^*\}$, the sequence $\{\phi_{n}^*(f_n)\}$ has no subsequence which converges to a non-zero limit in $L^{p_0}$. Let us start with such an extremizing sequence $\{f_n\}$ such that $\|Tf_n\|_{L^{q_0}}\geq(1-\frac{1}{n}) A$ for each $n$. Then there exists another sequence $\{g_n\}$ such that $\langle Tf_n, g_n\rangle$ converges to $A=\|T\|_{L^{p_0}\rightarrow L^{q_0}}$ and $\|g_n\|_{L^{q'_0}}=1$. By Lemma \ref{spatial} there is a sequence of symmetries $\{\phi_{n}^*\}$ such that after changing $f_n$ to $\phi_{n}^*(f_n)$ and $g_n$ to $\psi_{n}^*(g_n)$, if necessary, we have
\begin{itemize}
\item $f_n=\sum_{j\in S_n}2^{j}f_{n,j}$, $f_{n,j} \sim\chi_{E_{n,j}}$;
\item $|j-J_n|\leq C n^{-C}\quad\text{for all}\quad j\in S_n$;
\item $E_{n,j}\subset Cn^{{-C}^{Cn^{-C}}}B (0,0,\alpha_n,1)\quad\text{for all}\quad j\in S_n$;
\item $\|2^{J_n}\chi_{B (0,0,\alpha_n,1)}\|_{p_0}\leq C$.
\end{itemize}
and
\begin{itemize}
\item $g_n=\sum_{k\in \tilde{S}_n}2^{k}g_{n,k}$, $g_{n,k}\sim\chi_{F_{n,k}}$;
\item $|k-K_n|\leq C n^{-C}\quad\text{for all}\quad k\in \tilde{S}_n$;
\item $F_{n,k}\subset Cn^{{-C}^{Cn^{-C}}}B^{*} (0,0,\alpha_n,1)\quad\text{for all}\quad k\in \tilde{S}_n$;
\item $\|2^{K_n}\chi_{B^{*} (0,0,\alpha_n,1)}\|_{{q'_0}}\leq C$.
\end{itemize}
Since $f_n$ has no subsequence converging to a nonzero limit in $L^{p_0}$, by the proof of Lemma \ref{localization}, $\alpha_n\rightarrow 0$.

We define 
$$
\tilde{f}_{n}(x):={\alpha_n}^{\frac{d-1}{p_0}} f_{n}((0,\alpha_{n}(x_2,...,x_d)+h(x_1))
$$ 
and
$$
\tilde{g}_n(x)={\alpha_n}^{\frac{d}{q^{'}_0}}g_{n}(\alpha_{n}x).
$$

Let $c_n=Cn^{{-C}^{Cn^{-C}}}$. This implies for each $\tilde{f}_n$, we now have the following,
$$
\tilde{f}_n=\sum_{|j|<c_n}2^{j}f_{n,j}((0,2^{\frac{-p_{0}J_{n}}{d-1}}(x_2,...,x_d)+h(x_1))).
$$
Let $f^{*}_{n,k}(x)= f_{n,k}((0,2^{\frac{-p_{0}J_{n}}{d-1}}(x_2,...,x_d)+h(x_1)))$. Then $f^{*}_{n,j}\sim\chi_{E^{*}_{n,j}}$ with the following properties:
\begin{itemize}
\item $\chi_{E^{*}_{n,0}}\sim\chi _{B(0,1)}$;
\item $E^{*}_{n,j}\subset B(0,c_n)$ for all $|j|<c_n$.
\end{itemize}
Similarly we have
$$
\tilde g_n(x)=\sum_{|k|<c_n}2^kg_{n,k}(2^{\frac{-q'_{0}K_{n}}{d}}x).
$$
If $g^{*}_{n,k}(x)=g_{n,k}(2^{\frac{-q'_{0}K_{n}}{d}}x)$ then $g^{*}_{n,k}\sim\chi_{F^{*}_{n,k}}$ with
\begin{itemize}
\item $\chi_{F^{*}_{n,0}}\sim\chi _{B(0,1)}$;
\item $F^{*}_{n,k}\subset B(0,c_n)$ for all $|j|<c_n$.
\end{itemize}

By a simple change of variable we see that for each $n$, we have $\|\tilde{g}_{n}\|_{L^{q'_0}}=1$ and $\|\tilde{f}_{n}\|_{L^{p_0}}=1$. Furthermore, we now apply the proof of Lemma \ref{localization} to show that there is an $f\in L^{p_0}$ such that $\{\tilde{f}_n\}$ has a subsequence that converges weakly to $f$ as $n$ goes to infinity and there is an $g\in L^{q'_0}$ such that $\{\tilde{g}_n\}$ has a subsequence weakly converges to $g$ as $n$ goes to infinity. WLOG we assume that the sequence $\{\tilde{f}_n\}$ it self converges weakly to $f$ and likewise for $\{\tilde{g}_n\}$. We shall now prove that 
\begin{equation}
\begin{split}
\|T\|_{L^{p_0}\rightarrow L^{q_0}}&=\lim_{n\rightarrow\infty}\langle Tf_n, g_n\rangle\\
&=\lim_{n\rightarrow\infty}\int_{(t,y)\in\mathbb{R}\times\mathbb{R}^{d}}\tilde{f_n}(\epsilon y_1+t, y-\frac{\gamma(\epsilon y_1+t)-\gamma(t)}{\epsilon})\tilde{g_n}(y) \,dy\,dt\\
&=\lim_{n\rightarrow\infty}\langle X^{*}(\tilde f_n), \tilde g_n\rangle\\
&= \langle X^{*}f,g\rangle\\
&\leq \|X^{*}\|_{L^{p_0}\rightarrow L^{q_0}}.
\end{split}
\end{equation}
where $\epsilon=\alpha_n$.

The proof of this claim is similar to the proof of \ref{existence}. We define for a large $R$,
\begin{align}
\tilde f_{n,R}(x)&=\tilde f_{n}(x)\,\chi_{\|x\|\leq R}(x)\,\chi_{\tilde f_{n}\leq R}(x),\qquad f_{R}(x)=f(x)\,
\chi_{\|x\|\leq R}(x)\,\chi_{f\leq R}(x);\\
\tilde g_{n,R}(x)&=\tilde g_{n}(x)\,\chi_{\|x\|\leq R}(x)\,\chi_{\tilde g_{n}\leq R}(x),.\qquad g_{R}(x)=g(x)\,
\chi_{\|x\|\leq R}(x)\,\chi_{g\leq R}(x).
\end{align}

For any compactly supported smooth function $\psi$, the operator $f\mapsto \psi X^{*}(\psi f)$
maps $L^{2}(\mathbb{R}^{d})$ boundedly to the Sobolev space $H^{s}$, which
in turn embeds into $L^{q}$ where $q>2$, see \cite{CNSW}.
Thus the weak convergence of $\tilde f_{n, R}$ to $f_{R}$ as $n\rightarrow\infty$ implies 
the $L^{q}$ norm convergence of $X^{*}(\tilde f_{n, R})$ to $X^{*}(f_{R})$ as $n\rightarrow\infty$, for 
every fixed $R$. Therefore $\langle X^{*}\tilde f_{n, R}, \tilde g_{n, R}\rangle\rightarrow \langle X^{*}f_{R},g_{R}\rangle$ as 
$n\rightarrow \infty$ for a fixed $R$.

By Lemma \ref{localization}  we know that the $L^{q^{'}_0}$ norms of $\tilde{g}_n$ and the $L^{p_0}$ norms of $\tilde{f}_n$ decreases uniformly in $n$ outside the ball of radius $R$ centered at $0$ as $R$ goes to infinity. This together with the previous paragraph imply that $\|T\|\leq \|X^{*}\|$.
\end{proof}

We now have the following immediate corollary.
\begin{corollary}\label{weak}
Let $T$ and $X$ be as above and $d>2$.
\begin{itemize}
\item If there exists an extremizing sequence, $\{f_n\}$, for $T:L^{p_0}\rightarrow L^{q_0}$ that does not have a subsequence converging to an extremizer modulo symmetries of $T$, then after applying the nonsymmetry, $f_n\rightarrow r^{\frac{d-1}{p_0}}_nf_n((0,r_{n}x')+h(x_1))$, it has a subsequence that converges weakly to an extremizer for $X^*:L^{p_0}\rightarrow L^{q_0}$.
\item  If there exists an extremizing sequence, $\{f_n\}$, for $T:L^{p_1}\rightarrow L^{q_1}$ that does not have a subsequence converging to an extremizer modulo symmetries of $T$, then after applying the nonsymmetry, $f_n\rightarrow r^{\frac{d}{q'_0}}_nf_n(r_{n}x)$, it has a subsequence that converges weakly to an extremizer for $X:L^{p_1}\rightarrow L^{q_1}$.
\end{itemize}
\end{corollary}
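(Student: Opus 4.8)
The plan is to read the corollary straight off the construction performed inside the proof of Lemma \ref{samenorm}. Recall that there, beginning with an extremizing sequence $\{f_n\}$ for $T:L^{p_0}\rightarrow L^{q_0}$ having no subsequence that converges to an extremizer modulo symmetries of $T$, we passed to a subsequence, applied symmetries, produced a scale $\alpha_n\rightarrow 0$, and formed the rescaled functions
$$
\tilde f_n(x)={\alpha_n}^{\frac{d-1}{p_0}}f_n\big((0,\alpha_n x')+h(x_1)\big),\qquad \tilde g_n(x)={\alpha_n}^{\frac{d}{q_0'}}g_n(\alpha_n x)
$$
(where $x'=(x_2,\dots,x_d)$), with $\|\tilde f_n\|_{p_0}=\|\tilde g_n\|_{q_0'}=1$; after passing to a further subsequence we obtained weak limits $\tilde f_n\rightharpoonup f$ in $L^{p_0}$ and $\tilde g_n\rightharpoonup g$ in $L^{q_0'}$, and we showed
$$
\langle X^{*}f,g\rangle=\|T\|_{L^{p_0}\rightarrow L^{q_0}}=\|X^{*}\|_{L^{p_0}\rightarrow L^{q_0}}.
$$
The nonsymmetry $f_n\mapsto r_n^{\frac{d-1}{p_0}}f_n((0,r_nx')+h(x_1))$ appearing in the statement is exactly this rescaling with $r_n=\alpha_n$, so it only remains to identify the weak limit $f$ as an extremizer for $X^{*}$.

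For that I would invoke the usual equality-case bookkeeping. Weak lower semicontinuity of the norms gives $\|f\|_{p_0}\leq 1$ and $\|g\|_{q_0'}\leq 1$, whence by H\"older's inequality and the definition of the operator norm
$$
\|X^{*}\|_{L^{p_0}\rightarrow L^{q_0}}=\langle X^{*}f,g\rangle\leq\|X^{*}f\|_{q_0}\,\|g\|_{q_0'}\leq\|X^{*}\|_{L^{p_0}\rightarrow L^{q_0}}\,\|f\|_{p_0}\,\|g\|_{q_0'}\leq\|X^{*}\|_{L^{p_0}\rightarrow L^{q_0}}.
$$
Thus all three inequalities are equalities; in particular $\|g\|_{q_0'}=1$ and $\|X^{*}f\|_{q_0}=\|X^{*}\|_{L^{p_0}\rightarrow L^{q_0}}\|f\|_{p_0}$, which forces $\|f\|_{p_0}=1$. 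Hence $f\neq 0$ and $f$ extremizes $X^{*}:L^{p_0}\rightarrow L^{q_0}$, proving the first item. The second item is handled identically: applying Lemma \ref{samenorm} and its proof to $T:L^{p_1}\rightarrow L^{q_1}$ (whose treatment passes through the adjoint operator and yields the rescaling $f_n\mapsto r_n^{\frac{d}{q_0'}}f_n(r_nx)$) produces weak limits whose pairing realizes $\|X\|_{L^{p_1}\rightarrow L^{q_1}}$, and the same H\"older argument identifies the relevant limit as an extremizer for $X$.

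I do not expect a genuine obstacle at this point, because all of the substantive analysis — choosing the rescaling so that the rescaled sequence does not run off to zero, the uniform decay of $\|\tilde f_n\|_{p_0}$ and $\|\tilde g_n\|_{q_0'}$ on the complements of large balls, and the local strong compactness of $X^{*}$ coming from the $H^{s}$-smoothing of $\psi\,X^{*}(\psi\,\cdot)$ — has already been carried out in the proof of Lemma \ref{samenorm}. The only subtlety worth flagging is the step $\|f\|_{p_0}=1$ (equivalently $f\neq 0$): weak convergence alone gives only $\|f\|_{p_0}\leq 1$, and it is the equality case of the H\"older chain above that upgrades this to equality and hence delivers a genuine (nonzero) extremizer.
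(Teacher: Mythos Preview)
Your proposal is correct and follows essentially the same approach as the paper, which simply declares the corollary ``immediate'' from the proof of Lemma~\ref{samenorm}. Your explicit H\"older equality-case chain to verify that the weak limit $f$ is a genuine (nonzero) extremizer for $X^{*}$ is a detail the paper leaves implicit here but tacitly relies on in the subsequent proof of Theorem~\ref{MT2}.
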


\subsection{Proof of Theorem \ref{MT2}}
To complete the proof of Theorem \ref{MT2}, it suffices to prove that the weak convergence in Corollary \ref{weak} is in fact $L^{p}$ convergence.
The proof is along the same line of proof of Lemma \ref{strongconv}. Let $\{\tilde{f}_n\}$ converges weakly to $f$ in $L^{p_0}$ and $\{\tilde{g}_n\}$
converges weakly to $g$ in $L^{q_0}$. Since $g$ is an extremizer for $X:L^{q^{'}_{0}}\rightarrow L^{p^{'}_{0}}$, By the Euler Lagrange equation for $X$ we have
$$
X^{*}\big((X(g))^{p^{'}_{0}-1}\big)= A^{p^{'}_{0}} g^{q^{'}_{0}-1}.
$$

Now we apply Theorem $2.11$ in \cite{LL} to the tuple $(p,g_n,g)=(q^{'}_{0}, \tilde{g}_{n},g)$ to prove that $\tilde{g}_{n}$ converges to $g$ in $L^{q^{'}_{0}}$. Similarly $\tilde{f}_{n}$ converges to $f$ in $L^{p_{0}}$.
\smallskip

\noindent{\textbf{Acknowledgements.} I would like to thank my Ph.D. adviser, Betsy Stovall, for suggesting this problem and for many insightful comments and numerous discussions during the course of this work. This work would have been impossible without her guidance. I also like to thank Almut Burchard and Michael Christ for their many valuable remarks, in particular Christ's suggestion to look at the X-ray transform for the end point case. I would like to thank Andreas Seeger for pointing out few references and many suggestions that improved the exposition of this article. This work was supported in part by NSF grants DMS-1266336 and DMS-1600458.  Part of this work was carried out when the author visited the Mathematical Sciences Research Institute as a program associate in the Harmonic Analysis program in Spring 2017, supported by DMS-1440140.  A version of this article formed part of the author's Ph.D. thesis. I would like to thank the referees for their comments and suggestions which significantly improved the exposition of this article.
\smallskip

\bibliographystyle{plain}
\bibliography{18054}
\end{document}